\documentclass[12pt]{article}
\usepackage{amsfonts}
\usepackage{amssymb}
\usepackage{amsmath}
\usepackage{amsthm}
\usepackage{euscript}
\usepackage{graphics}
\usepackage{verbatim}
\usepackage{color}

\usepackage[margin=1.1in]{geometry}

\usepackage{mathrsfs}

\usepackage{hyperref}

\sloppy

\newcommand\al{\alpha}
\newcommand\be{\beta}
\newcommand\ga{\gamma}

\newcommand\de{\delta}

\newcommand\z{\zeta}
\newcommand\om{\omega}
\newcommand\Om{\Omega}
\newcommand\la{\lambda}
\newcommand\La{\Lambda}
\newcommand{\eps}{\varepsilon}

\newcommand\alde{{\al,\be,\ga,\de}}

\newcommand\R{\mathbb R}

\newcommand\G{{\mathbb G}}
\newcommand\ZZ{\mathbb Z}

\newcommand{\GT}{{\mathbb{GT}}}
\newcommand{\Y}{\mathbb Y}
\newcommand{\GTe}{\mathbb G}

\newcommand\sgn{\operatorname{sgn}}
\newcommand{\wt}{\operatorname {wt}}
\newcommand\Dim{\operatorname{Dim}}
\newcommand\PP{\operatorname{Prob}}
\newcommand\const{\operatorname{const}}

\newcommand{\Z}{\mathcal Z}

\newcommand{\M}{\mathcal M}

\renewcommand{\L}{\mathfrak L}

\newcommand{\TI}{\vartriangleleft}

\newcommand\one{\mathbf1}

\newcommand{\rr}{\mathbf r}

\newcommand{\F}{\mathcal F}

\newcommand\ti{\widetilde}


\newtheorem{theorem}{Theorem}[section]
\newtheorem{proposition}[theorem]{Proposition}
\newtheorem{lemma}[theorem]{Lemma}
\newtheorem{corollary}[theorem]{Corollary}

\theoremstyle{definition}
\newtheorem{definition}[theorem]{Definition}
\newtheorem{remark}[theorem]{Remark}
\newtheorem{example}[theorem]{Example}

\numberwithin{equation}{section}

\title{A quantization of the harmonic analysis on the infinite--dimensional unitary group}

\author{Vadim Gorin and Grigori Olshanski}

\date{}

\begin{document}

\maketitle

\tableofcontents

\newpage

\begin{abstract}
The present work stemmed from the study of the problem of harmonic analysis on the
infinite-dimensional unitary group $U(\infty)$. That problem consisted in the decomposition of a certain
4-parameter family of unitary representations, which replace the nonexisting two-sided regular
representation (Olshanski, J. Funct. Anal., 2003). The required decomposition is governed by certain
probability measures on an infinite-dimensional space $\Om$, which is a dual object to $U(\infty)$. A way
to describe those measures is to convert them into determinantal point processes on the real line;
it turned out that their correlation kernels are computable in explicit form --- they admit a
closed expression in terms of  the Gauss hypergeometric function ${}_2F_1$ (Borodin and Olshanski,
Ann. Math., 2005).

In the present work we describe a (nonevident) $q$-discretization of the whole construction. This
leads us to a new family of determinantal point processes. We reveal its connection with an exotic finite system of $q$-discrete orthogonal
polynomials -- the so-called pseudo big $q$-Jacobi polynomials. The new point processes live on a
double $q$-lattice and we show that their correlation kernels are expressed through the basic
hypergeometric function ${}_2\phi_1$.

A crucial novel ingredient of our approach is an extended version $\G$ of the Gelfand-Tsetlin graph (the
conventional graph describes the Gelfand-Tsetlin branching rule for irreducible representations of unitary groups). We
find the $q$-boundary of $\G$, thus extending previously known results (Gorin, Adv. Math., 2012).
\end{abstract}

\section{Introduction}

\subsection{Origin of the problem}

By the infinite-dimensional unitary group we mean the inductive limit group $U(\infty):=\varinjlim U(N)$.  This group (as well as any its topological completion) is not locally compact and hence does not admit an invariant measure which is a necessary element for constructing the regular or biregular representation. Nevertheless, there exists a 4-parameter family
of unitary representations of $U(\infty)\times U(\infty)$ which are reasonable analogs of the biregular representations of the pre-limit groups $U(N)\times U(N)$ (Olshanski \cite{Ols-JFA}).  The problem of harmonic analysis for $U(\infty)$ in the formulation of \cite{Ols-JFA} consists in the decomposition of these ``generalized biregular representations'' of $U(\infty)\times U(\infty)$ into a continuous integral of irreduciblle representations.

As explained in \cite{Ols-JFA}, for nonexceptional values of the parameters, the decomposition
problem can be reduced to the description of a certain family $\{\M\}$ of probability measures
which live on an infinite-dimensional space $\Om$.

The space $\Om$, initially defined as a kind of a dual object to the group $U(\infty)$, can be
identified with the boundary of the Gelfand--Tsetlin graph --- a graded graph whose vertices of
level $N=1,2,\dots$ represent the irreducible characters of $U(N)$  and the graph structure
reflects the branching rule of characters.

Each measure $\M$ from our family is approximated by a canonical sequence $\M_1,\M_2,\dots$ of
discrete probability measures, which are defined on the growing levels of the Gelfand--Tsetlin
graph. We call  the latter measures the \textit{zw-measures}, and the limit measures $\M$ on $\Om$
are called the \textit{boundary zw-measures}.

In  Borodin-Olshanski \cite{BO-AnnMath} it is shown that every boundary  zw-measure $\M$ can be
turned into (the law of) a determinantal point process on the real line with two punctures, and the
correlation kernel of that process can be explicitly computed: it is expressed  in terms of the
Gauss hypergeometric function ${}_2 F_1$.  At present, this is the only way to describe the
boundary zw-measures $\M$.

The zw-measures $\M_N$ on the levels of the Gelfand--Tsetlin graph play a fundamental role in the
whole theory, because the only existing way to handle the boundary measures $\M$ relies on the
approximation $\M_N\to\M$.

There is a remarkable similarity between the zw-measures and $\be=2$  log-gas-type particle systems
(Forrester \cite{For}) arising in random matrix theory.  Here $\be$ is Dyson's parameter, which is
similar to the Jack parameter in the theory of symmetric functions (Macdonald \cite{Mac}). Like
log-gas systems, the zw-measures admit a deformation corresponding to arbitrary positive values of
$\be$  (Olshanski \cite{Ols-FAA}).  Is it possible to go further and construct other deformations
of the zw-measures, parallel to Macdonald's $(q,t)$-deformation of the canonical scalar product in
the algebra of symmetric functions? In the present work, we get an affirmative answer for  the
simplest non-Jack case, namely, for $q=t$.

At first we believed that the desired $q$-analog of the zw-measures can be constructed with the use
of the $q$-boundary of the Gelfand-Tselin graph --- a concept, which was introduced in Gorin
\cite{G-AM} and then successfully exploited in Borodin-Gorin \cite{BG}. And indeed, we were able to
do it  for certain special values of the parameters corresponding to  \emph{degenerate} versions of
the measures. However,  in the case of general parameters, all our attempts failed. Eventually we
realized what was the cause of failure.  It turned out that, for our purposes, the very notion of
the Gelfand-Tsetlin graph is not suitable and has to be extended.

The key idea of the definition of the \textit{extended Gelfand--Tsetlin graph} is the following:
the vertices of the ordinary Gelfand-Tsetlin graph can be identified with the finite point
configurations on the one-dimensional lattice $\ZZ\subset\R$, which label irreducible
representations of unitary groups. In the extended graph, the lattice $\ZZ$ has to be replaced by
the \textit{double lattice} $\ZZ\sqcup\ZZ$, which we identify with the double $q$-lattice of the
form
$$
\L:=\{\z_- q^m: m\in\ZZ\}\sqcup \{\z_+ q^n: n\in\ZZ\}\subset \R\setminus\{0\},
$$
where $\z_-<0$ and $\z_+>0$ are fixed parameters.  The specific choice of parameters $\z_-$, $\z_+$ does not play a substantial role, the most important is  that they are of opposite sign, so that the lattice lies on both sides of the zero in $\R$.

The lattice $\L$ comes from the Askey scheme of basic hypergeometric orthogonal polynomials (see Koekoek-Swarttouw \cite{KS}).  As explained in \cite{BO-AnnMath}, the pre-limit zw-measures are closely related to certain unnamed \textit{finite} systems of orthogonal polynomials on $\ZZ$;  in \cite{BO-AnnMath}, we called  them the  \textit{Askey-Lesky polynomials}.  It turns out that suitable $q$-analogs of the Askey-Lesky polynomials are the so-called \textit{pseudo big $q$-Jacobi polynomials} (Koornwinder \cite{Koo-Addendum}). These polynomials are eigenfunctions of a second order $q$-difference operator $D$ (a natural $q$-analog of the difference operator associated with the Askey-Lesky polynomials) and they are orthogonal on $\L$.

The idea to use the double lattice $\L$ came to us when we became aware of the papers of Groenevelt and Koelink  \cite{Gro1}, \cite{Gro2}, \cite{GroKoel}. Thanks to them we realized that $D$  admits a suitable selfadjoint version only when it is considered on a double lattice.

\subsection{Main results}

Very briefly, the main results of the present paper are the following:

\medskip

1. \textit{The description of the $q$-boundary of the extended Gelfand-Tsetlin
graph.} We show that the  $q$-boundary can be identified with the space of two-sided
bounded countable point configurations on the lattice $\L$. In the case of the ordinary Gelfand--Tsetlin graph, various approaches to the description of the ($q$-) boundary were devised in Okounkov--Olshanski \cite{OO}, Borodin--Olshanski \cite{BO-AM}, Gorin \cite{G-AM}, Petrov \cite{Petrov-MMJ}.  Our approach is different; it combines
quantative estimates with some  ideas from Gorin--Panova \cite{GP}.

\medskip

2. \textit{The construction of  a $q$-analog of the boundary zw-measures. } These are certain
probability measures on the $q$-boundary of the extended Gelfand--Tsetlin graph; they are obtained from the $N$-particle
orthogonal polynomial ensembles corresponding to pseudo big $q$-Jacobi polynomials.
The key point is the \textit{coherence property}:  the ensembles with different
values of $N=1,2,\dots$ are linked to each other by means of certain canonical stochastic matrices associated with the graph.  The coherency property is a nontrivial hypergeometric identitity; various combinatorial proofs of a similar identity in the case of the ordinary Gelfand--Tsetlin graph were suggested in Olshanski \cite{Ols-FAA}, but our approach is different: we show that our $q$-identity can be derived from the backward shift relation satisfied by the big $q$-Jacobi polynomials.

\medskip

3. \textit{The computation of the correlation functions.} We prove that the measures in item 2 above are determinantal measures and so are completely determined by their
correlation kernels. Those  have the form
\begin{equation}
\label{eq_integrable_kernel} K(x,y)=\const
\frac{\F_0(x)\F_1(y)-\F_1(x)\F_0(y)}{x-y}, \qquad x,y\in\L,
\end{equation}
where $\F_0(x)$ and $\F_1(y)$ are certain functions on $\L$  expressed through the
basic hypergeometric function ${}_2\phi_1$.
Kernels of such a form as in \eqref{eq_integrable_kernel}, are called
\textit{integrable} (in the sense of Its-Izergin-Korepin-Slavnov, see Deift's survey
paper \cite{Deift}). There are a lot of examples of integrable kernels coming from
different models of random matrix theory and other sources, but the above kernel
seems to be the first one which is expressed through basic hypergeometric functions.

\subsection{Organization of the paper}
In Section \ref{sect2}, we introduce the extended Gelfand--Tsetlin graph and the associated  canonical stochastic matrices linking its levels; we call these matrices the \textit{$q$-links}.

Section \ref{sect3} is devoted to the $q$-boundary. We start with generalities concerning the
notion of boundary that we need: it is defined as the set of extreme points of a projective limit
of simplices. Then we pass to our concrete situation where the simplices in question are spanned by
the vertices from the levels of the extended Gelfand--Tsetlin graph and the maps between the
simplices are given by the $q$-links. The description of the boundary is given in Theorems
\ref{thm1}, \ref{thm2}, \ref{thm3}, \ref{thm4}.  Together they constitute our first main result. At
the end of the section we define the correlation functions for arbitrary probability measures on
the boundary and show that they can be obtained, in principle, by a large-$N$ limit transition.
This abstract result is then used in the computation of Section \ref{sect5}.

Section \ref{sect4} deals with the \textit{pre-limit $q$-zw-measures} --- a $q$-analog of the
pre-limit zw-measures. We start with their definition. Next we explain their connection with the
pseudo big $q$-Jacobi polynomials and collect a number of formulas that we need (here our basic
source is recent Koornwinder's paper \cite{Koo-Addendum}, which gives  a further reference to
Groenevelt--Koelink \cite{GroKoel}). Finally, we establish the  coherency property of the pre-limit
$q$-zw-measures (Theorem   \ref{theorem_coherency}).  It implies the existence of the boundary
$q$-zw-measures, which is our second main result.

In the final Section \ref{sect5} we compute the correlation kernels of the boundary $q$-zw-measures (Theorem \ref{theorem_ergodic}). This is the third main result.  The computation is a bit tedious because we have to manipulate with long formulas, but it is quite elementary: the desired kernel is obtained by a direct limit transition in the $N$-th Christoffel--Darboux kernel for the pseudo big $q$-Jacobi polynomials as $N\to\infty$.

\subsection{Acknowledgment}

We thank Yuri Neretin for bringing our attention to papers by Wolter Groenevelt and Erik Koelink;
their ideas helped us very much. We are also grateful to Erik Koelink for valuable comments.

V.~G.\ was partially supported by the NSF grant DMS-1407562.

\section{The extended Gelfand-Tsetlin graph and q-links}\label{sect2}

\subsection{The extended Gelfand-Tsetlin graph $\G$}\label{sect2.1}

Here we introduce a novel object ---- the extended Gelfand--Tsetlin graph and explain how it is
related to the conventional Gelfand--Tsetlin graph.

We recall the definition of the double $q$-lattice $\L\subset\R$:
$$
\L:=\{\z_- q^m: m\in\ZZ\}\sqcup \{\z_+ q^n: n\in\ZZ\}\subset \R\setminus\{0\},
$$
where $q\in(0,1)$, $\z_-<0$, and $\z_+>0$ are parameters.  Unless otherwise stated, these three parameters are assumed to be fixed.

By a \textit{configuration} on $\L$ we mean a subset $X\subset\L$. Elements of $X$ are called \textit{particles}. If $X$ is finite, we enumerate its particles in the increasing order and write $X=(x_1<\dots<x_N)$. The set of $N$-particle configurations is denoted by $\G_N$, $N=1,2,\dots$\,.

 We need a special notion of the \textit{interval} $I(a,b)$ between two points $a<b$ of $\L$:
 $$
I(a,b) := \begin{cases} [a,b)\cap\L, & a<b<0,\\
[a,b]\cap\L, & a<0<b,\\
(a,b]\cup\L, & 0<a<b
\end{cases}
$$
Note that if $a<0<b$, then the interval $I(a,b)$ contains  infinitely many points.

We say that two configurations $X\in\G_{N+1}$ and $Y\in\G_N$ \emph{interlace}
if
$$
y_i\in I(x_i, x_{i+1}), \quad i=1,\dots,N.
$$
Then we write $Y\prec X$ or $X\succ Y$.

Note that the interval $I(x_1,x_{N+1})$ is split into the disjoint union of $N$ subintervals
$I(x_i,x_{i+1})$, $i=1,\dots,N$. The interlacement condition $Y\prec X$ means that each of these
subintervals  contains precisely one particle from $Y$, see Figure \ref{Fig_Interlacing_1} for an
example.

\begin{figure}[h]
\begin{center}
  {\scalebox{1.2}{\includegraphics{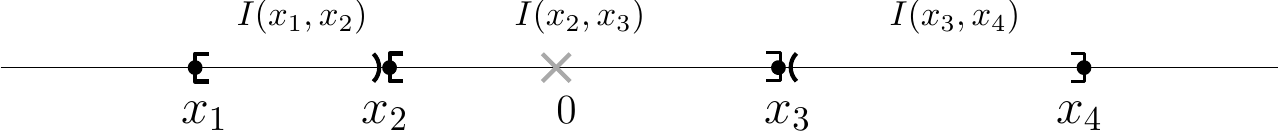}}}
 \end{center}
 \caption{Example of partition into intervals $I(x_i,x_{i+1})$. If $Y\prec X$, then there is precisely one $y$--particle in each interval.
 \label{Fig_Interlacing_1}
  }
\end{figure}

Equivalently, the interlacement relation can be described in the following way.  Let us modify the
above definition of interval by setting
 $$
\ti I(a,b) := \begin{cases} (a,b]\cap\L, & a<b<0,\\
(a,b)\cap\L, & a<0<b,\\
[a,b)\cup\L, & 0<a<b,
\end{cases}
$$
where we also allow $a=-\infty$ and $b=+\infty$. Given $Y=(y_1<\dots<y_N)$ we split the whole
lattice $\L$ into $N+1$ intervals $\ti I(y_i,y_{i+1})$, $i=0,\dots,N$, where $y_0:=-\infty$ and
$y_{N+1}:=+\infty$. In this notation, $X\succ Y$ means that the each such interval  contains
exactly one particle from $X$, see Figure \ref{Fig_Interlacing_2} for an example.

\begin{figure}[h]
\begin{center}
  {\scalebox{1.2}{\includegraphics{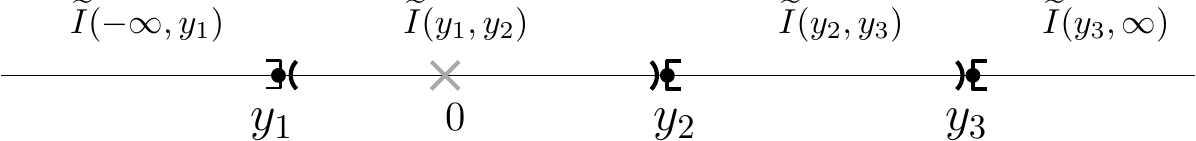}}}
 \end{center}
 \caption{Example of partition into intervals
 $\widetilde I(y_i,y_{i+1})$. If $Y\prec X$, then there is precisely one $x$--particle in each interval.
 \label{Fig_Interlacing_2}}
\end{figure}

\begin{definition}
The  \textit{extended Gelfand-Tsetlin graph} $\G$ is the graded graph whose vertex set is the
disjoint union $\G_1\sqcup\G_2\sqcup\dots$ and the edges are formed by the pairs $X\succ Y$ of
interlacing configurations. We call $\G_N$ the \textit{$N$th level} of the graph.
\end{definition}

As an abstract graph, $\G$ does not depend on the three parameters $q,\z_-,\z_+$, but below we
equip $\G$ with a supplementary structure which is defined with the use of  these parameters.

Recall that the ordinary \textit{Gelfand-Tsetlin graph} (see e.g.\ \cite{BO-AM},\cite{G-AM} and
references therein), is the graded graph $\GT$ whose $N$th level $\GT_N$ ($N=1,2,\dots$) consists
of \textit{signatures} of length $N$: these are $N$-dimensional vectors
$\nu=(\nu_1\ge\dots\ge\nu_N)\in\ZZ^N$. The edges of $\GT$ are formed by pairs $\la\succ\nu$ of
interlaced signatures, where $\la\succ\nu$ means
$$
\la_i\ge\nu_i\ge\la_{i+1}, \qquad i=1,\dots,N, \quad \la\in\GT_{N+1}, \quad \nu\in\GT_N.
$$

There is a natural isomorphism between $\GT$ and the subgraph $\G^+\subset\G$ whose vertices are the configurations entirely contained in the positive part $\L^+$ of the lattice $\L$; here we use the notation
$$
\L^\pm=\{\z_\pm q^n: n\in\ZZ\}\subset\L.
$$
The map $\GT\to\G^+$ is given by
$$
\GT_N\ni\nu\mapsto X\in\G^+, \qquad x_i:=\z_+ q^{\nu_i+N-i}, \quad i=1,\dots,N.
$$
This justifies the name ``extended Gelfand-Tsetlin graph'' given to $\G$.

\subsection{The $q$-links $\La^N_K$}

Here we introduce the $q$-links --- stochastic matrices linking the levels of the graph $\G$. These
matrices arise  when we equip the edges of the graph $\G$ with natural weights depending on
parameter $q$.

\begin{definition}
By the \textit{cotransition probabilities} in $\G$ we mean  the infinite sequence of matrices
$\La^2_1, \La^3_2,\dots$, where $\La^{N+1}_N$ is the matrix of format $\G_{N+1}\times\G_N$ with the
entries
$$
\La^{N+1}_N(X,Y)=\begin{cases}\prod_{i=1}^N |y_i| \cdot (1-q)\dots(1-q^{N})\cdot\dfrac{\prod_{1\le i<j\le N}(y_j-y_i)}
{\prod_{1\le i<j\le N+1}(x_j- x_i)}, & Y\prec X,\\ 0, &\text{otherwise}.\end{cases}
$$
These matrices are called \textit{$q$-links.}
\end{definition}

\begin{remark}
On  the subgraph $\G^+$, the $q$-links coincide with the matrices defined in Gorin \cite{G-AM}. The
$q\to 1$ limit of these matrices describes the branching rule for normalized irreducible characters of the groups $U(N+1)$ under restriction onto $U(N)\subset U(N+1)$, cf.\
\cite{BO-AM, G-AM, Ols-JFA}.
\end{remark}

\begin{proposition} \label{Prop_links_are_stochastic}
The $q$-links are stochastic matrices, i.e.\ for each $N=1,2,\dots$, all the matrix
elements of $\La^{N+1}_N$ are non-negative and for every $X\in\G_{N+1}$
$$
 \sum_{Y\in \G_N} \La^{N+1}_N(X,Y)=1.
$$
\end{proposition}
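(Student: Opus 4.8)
The plan is to fix a configuration $X=(x_1<\dots<x_{N+1})\in\G_{N+1}$ and treat the two assertions separately. Nonnegativity is immediate: every point of $\L$ is nonzero, so $\prod_{i=1}^N|y_i|>0$; the factor $(1-q)\cdots(1-q^N)$ is positive because $q\in(0,1)$; and since the entries of $X$ and of any $Y\prec X$ are listed in increasing order, the two Vandermonde products $\prod_{i<j}(y_j-y_i)$ and $\prod_{i<j}(x_j-x_i)$ are strictly positive. Hence each nonzero entry $\La^{N+1}_N(X,Y)$ is positive. For the normalization, after pulling out of the sum the factors that depend only on $X$, it remains to prove the identity
\[
\sum_{Y\prec X}\ \prod_{i=1}^N|y_i|\prod_{1\le i<j\le N}(y_j-y_i)=\frac{\prod_{1\le i<j\le N+1}(x_j-x_i)}{(1-q)\cdots(1-q^N)}.
\]

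The key step is to exploit the interlacing condition to factorize the left-hand side. I would write the Vandermonde product as the determinant $\det(y_i^{j-1})_{i,j=1}^N$ and absorb the factor $|y_i|$ into the $i$-th row, so that the summand becomes $\det(|y_i|\,y_i^{j-1})_{i,j=1}^N$. Because the intervals $I(x_i,x_{i+1})$, $i=1,\dots,N$, partition $I(x_1,x_{N+1})$ into disjoint pieces, giving a configuration $Y\prec X$ amounts to choosing one particle $y_i\in I(x_i,x_{i+1})$ in each piece, the ordering $y_1<\dots<y_N$ being then automatic; thus the sum runs over the full product set $\prod_i I(x_i,x_{i+1})$. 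Since the $i$-th row of the determinant depends only on $y_i$, multilinearity in the rows gives
\[
\sum_{Y\prec X}\det\bigl(|y_i|\,y_i^{j-1}\bigr)_{i,j=1}^N=\det\bigl(S_j(x_i,x_{i+1})\bigr)_{i,j=1}^N,\qquad S_j(a,b):=\sum_{y\in I(a,b)}|y|\,y^{j-1}.
\]
When $a<0<b$ the interval is infinite, but $|y|\,y^{j-1}$ decays geometrically as the particle tends to $0$, so the series converges absolutely and the interchange is legitimate.

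The computational heart is the evaluation of $S_j(a,b)$. On each half of $\L$ the quantity $|y|\,y^{j-1}$ equals $\pm y^j$ — with sign $+$ for $y>0$ and $-$ for $y<0$ — and this is a geometric progression of ratio $q^j$ in the lattice index. Summing the relevant geometric series in each of the three cases of the definition of $I(a,b)$, and here the precise placement of the open and closed endpoints is used, one obtains in every case the single clean formula
\[
S_j(a,b)=\frac{b^j-a^j}{1-q^j}.
\]
The pleasant point is that the minus sign carried by the negative part of $\L$ is exactly what produces the $-a^j$ in the numerator, so the positive and negative contributions assemble into one and the same expression.

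Finally, factoring $1/(1-q^j)$ out of the $j$-th column reduces the claim to the polynomial identity
\[
\det\bigl(x_{i+1}^j-x_i^j\bigr)_{i,j=1}^N=\prod_{1\le i<j\le N+1}(x_j-x_i).
\]
I would prove this starting from the $(N+1)\times(N+1)$ Vandermonde matrix $\bigl(x_i^{j-1}\bigr)_{i,j=1}^{N+1}$, whose determinant is the right-hand side: subtracting from each row the preceding one (processing the rows from the bottom upward, so the determinant is unchanged) turns the first column into $(1,0,\dots,0)^{\top}$, and a cofactor expansion along it leaves precisely the $N\times N$ determinant $\det(x_{i+1}^j-x_i^j)_{i,j=1}^N$. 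Combining this identity with the evaluation of $S_j$ and with $\prod_j(1-q^j)=(1-q)\cdots(1-q^N)$ establishes the normalization. I expect the only genuinely delicate points to be the convergence and rearrangement in the infinite-interval case and the careful bookkeeping of endpoints in the three cases of $S_j$; the rest is standard linear algebra.
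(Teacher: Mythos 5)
Your proof is correct and is essentially the paper's own argument: your evaluation $S_j(a,b)=(b^j-a^j)/(1-q^j)$ is exactly Lemma \ref{lemma_summation_geometric}, and your combination of the disjoint-interval structure of interlacing, multilinearity of the determinant, and the row/column-difference reduction of the $(N+1)\times(N+1)$ Vandermonde is precisely the computation in the proof of Lemma \ref{Lemma_dimension_eval}, run in the opposite direction. The only difference is organizational: the paper packages the identity as an induction establishing the closed formula for $\Dim X$ and then reads off stochasticity from $\La^{N+1}_N(X,Y)=\wt(X,Y)\Dim Y/\Dim X$, whereas you verify the single one-step normalization identity directly.
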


For the proof we need a preparation.
We endow every edge $X\succ Y$ with a \textit{weight} defined by
$$
\wt(X,Y):=|Y|:=\prod_{y\in Y}|y|
$$
and we  extend this definition to any pair of vertices from adjacent levels by setting
$\wt(X,Y)=0$ if $X$ and $Y$ do not interlace.

Next, for any pair $X\in\G_N$, $Y\in\G_K$, where $N>K$, we set
\begin{gather*}
\Dim(X,Y):=\sum_{X^{(N-1)},\dots,X^{(K+1)}}\wt(X,X^{(N-1)})\wt(X^{(N-1)},X^{(N-2)})\dots
\wt(X^{(K+1)},Y)\\
=|Y|\sum_{X\succ X^{(N-1)}\succ\dots\succ X^{(K+1)}\succ
Y}|X^{(N-1)}|\dots|X^{(K+1)}|
\end{gather*}
(if $N-K=1$, then $\Dim(X,Y):=\wt(X,Y)$).
We call $\Dim(X,Y)$ the \emph{skew dimension}.

The \emph{dimension}\footnote{For  $X\subset\L^+$, the limit quantity
$\lim_{q\to 1} \Dim X$ coincides with the dimension of the corresponding irreducible representation
of the unitary group $U(N)$.} of $X\in\G_N$ is defined in a similar way:
\begin{gather*}
\Dim X:=\sum_{X^{(N-1)},\dots,X^{(1)}}\wt(X,X^{(N-1)})\wt(X^{(N-1)},X^{(N-2)})\dots
\wt(X^{(2)},X^{(1)})|X^{(1)}|\\
=\sum_{X\succ X^{(N-1)}\succ\dots\succ
X^{(1)}}|X^{(N-1)}|\dots|X^{(1)}|=\sum_{X^{(1)}\in\G_1}\Dim(X,X^{(1)}).
\end{gather*}

By convention, the dimension of every vertex of $\G_1$ is equal to $1$.

\begin{lemma} \label{lemma_summation_geometric}
For every $a,b\in\L$ such that $a<b$ one has
$$
b^n-a^n=(1-q^n)\sum_{c\in I(a,b)}|c|\cdot c^{n-1}, \qquad n=1,2,\dots\,.
$$
\end{lemma}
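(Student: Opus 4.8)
The plan is to recognize the right-hand side as a (possibly infinite) geometric sum that telescopes, so that the whole statement reduces to the summation formula for geometric progressions on each half-line of $\L$. The starting point is a pointwise identity: for a lattice point $c\in\L$ its neighbour closer to the origin on the same half-line is exactly $qc$, and a one-line computation gives
\[
(1-q^n)\,|c|\,c^{n-1}=
\begin{cases}
c^n-(qc)^n, & c>0,\\
(qc)^n-c^n, & c<0,
\end{cases}
\]
since $|c|\,c^{n-1}=c^n$ for $c>0$ and $|c|\,c^{n-1}=-c^n$ for $c<0$. Thus, after multiplying the asserted identity by $(1-q^n)$, every summand becomes a difference of two consecutive values of $\pm(\,\cdot\,)^n$ along the geometric progression $\{q^k c\}$, and I expect the sum over $I(a,b)$ to telescope down to the two endpoint contributions $b^n$ and $-a^n$.

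First I would treat the two one-sided cases $a<b<0$ and $0<a<b$, where $I(a,b)$ is a finite set of consecutive lattice points lying on a single half-line, of the form $\{c,qc,\dots,q^K c\}$ with $c$ equal to one of the endpoints (namely $c=b$ in the positive case and $c=a$ in the negative case) and with the excluded endpoint being $q^{K+1}c$. Substituting the pointwise identity and cancelling the interior terms then collapses the sum to $b^n-a^n$. Here it is crucial that the half-open/closed conventions built into the definition of $I(a,b)$ include precisely the endpoint that makes the telescoping collapse to $b^n-a^n$ rather than to an off-by-one expression; verifying this matching is the only place where attention is required.

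The genuinely new feature is the mixed case $a<0<b$, where $I(a,b)=[a,b]\cap\L$ is infinite: it contains the entire negative tail $a,qa,q^2a,\dots$ accumulating at $0^-$ and the entire positive tail $b,qb,q^2b,\dots$ accumulating at $0^+$. Here I would split the sum into its negative and positive parts, observe that each is an absolutely convergent geometric series because $0<q<1$ and $n\ge1$, and apply the same telescoping. The interior terms again cancel in pairs; the terms that would survive at the origin are $(q^k a)^n$ and $(q^k b)^n$, which tend to $0$ as $k\to\infty$, so the negative tail contributes $-a^n$ and the positive tail contributes $b^n$, giving $b^n-a^n$ once more. The main (and quite mild) obstacle is therefore purely bookkeeping: confirming that the endpoint conventions of $I(a,b)$ are matched correctly in all three cases and that the boundary terms at $0$ vanish, so that nothing is lost in passing to the infinite sum.
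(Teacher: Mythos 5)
Your proof is correct and follows essentially the same route as the paper, whose proof consists of exactly the same three-case split ($0<a<b$, $a<b<0$, $a<0<b$) with each case reduced to summing a geometric progression --- your telescoping identity $(1-q^n)|c|c^{n-1}=\pm\bigl(c^n-(qc)^n\bigr)$ is just that summation written pointwise. Your endpoint bookkeeping in all three cases and the vanishing of the boundary terms at the origin in the mixed case are verified correctly, so nothing is missing.
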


\begin{proof}
We examine separately three cases: $0<a<b$, $a<b<0$, and $a<0<b$, and in all cases the claim is
reduced to summation of geometric progressions.
\end{proof}
In our notations below we use the $q$--Pochhammer symbol:
$$
 (a;q)_n=(1-a)(1-a\cdot q)\cdots (1-a \cdot q^{n-1}).
$$
\begin{lemma} \label{Lemma_dimension_eval}
The quantity $\Dim X$ is finite for any $N$ and any $X\in\G_N$, and it is given by
\begin{equation}
\label{eq_dimension_formula} \Dim X=\frac{\prod_{1\le i<j\le
N}(x_j-x_i)}{(q;q)_1\dots(q;q)_{N-1}}.
\end{equation}
\end{lemma}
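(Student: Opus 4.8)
The plan is to prove \eqref{eq_dimension_formula} by induction on $N$, the engine being Lemma~\ref{lemma_summation_geometric}. For $N=1$ both the numerator $\prod_{1\le i<j\le 1}(x_j-x_i)$ and the denominator $(q;q)_1\cdots(q;q)_0$ are empty products, so the formula reads $\Dim X=1$, matching the convention. For the inductive step I first peel off the top edge in the defining sum to get the one-step recursion
$$
\Dim X=\sum_{Y\prec X,\ Y\in\G_N}\wt(X,Y)\,\Dim Y=\sum_{Y\prec X}|Y|\,\Dim Y,\qquad X\in\G_{N+1}.
$$
Substituting the inductive hypothesis for $\Dim Y$ and pulling out the factor $(q;q)_1\cdots(q;q)_{N-1}$ (which then carries over unchanged to the desired answer, since $(q;q)_{N-1}(1-q^N)=(q;q)_N$), the whole statement reduces to the purely combinatorial identity
$$
\sum_{Y\prec X}\Big(\prod_{i=1}^N|y_i|\Big)\prod_{1\le i<j\le N}(y_j-y_i)=\frac{\prod_{1\le i<j\le N+1}(x_j-x_i)}{(q;q)_N}.
$$

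The key observation is that the interlacing constraint $Y\prec X$ lets each $y_i$ run \emph{independently} over the interval $I(x_i,x_{i+1})$ (these intervals partition $I(x_1,x_{N+1})$ in increasing order, so the resulting $y_1<\dots<y_N$ is automatically a configuration), while the Vandermonde factor couples the variables only through a determinant. Writing $\prod_{1\le i<j\le N}(y_j-y_i)=\det(y_i^{\,\ell-1})_{i,\ell=1}^N=\sum_{\sigma\in S_N}\sgn(\sigma)\prod_i y_i^{\sigma(i)-1}$ and interchanging the sums, the left-hand side factorizes as
$$
\sum_{\sigma\in S_N}\sgn(\sigma)\prod_{i=1}^N\Big(\sum_{y_i\in I(x_i,x_{i+1})}|y_i|\,y_i^{\sigma(i)-1}\Big).
$$
Lemma~\ref{lemma_summation_geometric} now evaluates each inner sum in closed form as $(x_{i+1}^{\sigma(i)}-x_i^{\sigma(i)})/(1-q^{\sigma(i)})$; since $\sigma$ is a bijection of $\{1,\dots,N\}$, the product of denominators equals $(q;q)_N$ for every $\sigma$ and factors out, leaving $(q;q)_N^{-1}\det\big(x_{i+1}^{\ell}-x_i^{\ell}\big)_{i,\ell=1}^N$.

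It then remains to establish the determinantal identity $\det(x_{i+1}^{\ell}-x_i^{\ell})_{i,\ell=1}^N=\prod_{1\le i<j\le N+1}(x_j-x_i)$. This I would obtain by starting from the $(N+1)\times(N+1)$ Vandermonde matrix $(x_k^{\ell})_{k=1,\dots,N+1;\ \ell=0,\dots,N}$, subtracting each row from the one below it (an operation preserving the determinant), and expanding along the first column, whose entries become $1,0,\dots,0$; the surviving minor is exactly the matrix above. Finiteness of $\Dim X$ --- the first assertion of the lemma --- comes out as a byproduct: even when $I(x_i,x_{i+1})$ is infinite (the case $x_i<0<x_{i+1}$) its points accumulate geometrically at $0$, so each inner sum $\sum_{y_i}|y_i|^{\sigma(i)}$ converges absolutely. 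I expect the only genuinely delicate point to be precisely this absolute-convergence bookkeeping, which is what legitimizes interchanging the sum over $Y$ with the permutation sum and the factorization; the determinant evaluation and the tracking of the $q$-Pochhammer factors are routine once the factorization is in place.
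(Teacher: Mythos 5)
Your proof is correct and is essentially the paper's own argument: the same induction on $N$, the same one-step recurrence $\Dim X=\sum_{Y\prec X}|Y|\,\Dim Y$, and the same two ingredients --- Lemma \ref{lemma_summation_geometric} and the identification of $\prod_{1\le i<j\le N+1}(x_j-x_i)$ with the adjacent-difference determinant $\det\bigl(x_{i+1}^{\ell}-x_i^{\ell}\bigr)_{i,\ell=1}^{N}$ via row/column operations on the Vandermonde matrix. The only differences are cosmetic: you run the computation from the sum over $Y\prec X$ toward the determinant (permutation expansion plus factorization over the independent intervals $I(x_i,x_{i+1})$), whereas the paper runs it in the opposite direction using multilinearity, and you make explicit the absolute-convergence bookkeeping for the infinite interval straddling $0$, which the paper leaves implicit.
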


\begin{proof}
For $N=1$ the claim holds true:  both sides of the formula equal 1.  Then we use
induction by $N$. Let us temporarily denote the right-hand side of
\eqref{eq_dimension_formula} by $\Dim' X$. By the very definition of the dimension
function it suffices to check the recurrence relation
$$
\Dim' X=\sum_{Y:\, Y\prec X} |Y|\Dim' Y.
$$
Using the Vandermonde determinant evaluation, it reduces to the relation
\begin{equation}
\label{eq_dim_recurrence} \det[x_j^{i-1}]_{i,j=1}^N=(q;q)_{N-1}\sum_{Y:\, Y\prec
X}|Y|\det[y_j^{i-1}]_{i,j=1}^{N-1}, \qquad X\in\G_N, \quad N\ge2.
\end{equation}

We transform the matrix in the left--hand side of \eqref{eq_dim_recurrence} by
subtracting from the $j+1$-th column the $j$-th one, starting with $j=N-1$ and
ending with $j=1$. The first row of the resulting matrix is $(1,0,\dots,0)$ and the
the determinant takes the form
$$
\det[x_{j+1}^i-x_j^i]_{i,j=1}^{N-1}.
$$
 At this stage application of Lemma \ref{lemma_summation_geometric}
completes the proof.
\end{proof}

\begin{proof}[Proof of Proposition \ref{Prop_links_are_stochastic}]
Comparing the definition of the $q$-link $\La^{N+1}_N$ with the result of Lemma
\ref{Lemma_dimension_eval} we see that
$$
\La^{N+1}_N(X,Y)=\frac{\wt(X,Y)\Dim Y}{\Dim X}.
$$
On the other hand, by the very definition of the dimension function, the matrix on the right is
stochastic.
\end{proof}

For $N>K$ we denote by $\La^N_K$ the matrix of format $\G_N\times\G_K$ with the entries
$$
\La^N_K(X,Y)=\frac{\Dim(X,Y) \Dim Y}{\Dim X}.
$$
{}From the definition of the dimension it is clear that $\La^N_K$ is a stochastic matrix and one has
$$
\La^N_K=\La^N_{N-1}\dots \La^{K+1}_K.
$$

 \subsection{Characterization of $q$-links}

In this section we explain how $q$--links are related to Schur polynomials. We start
with recalling a few basic formulas.

Let $\Y$ denote the set of all partitions, which we identify with the Young diagrams. The
\textit{length} of a partition is the number of its nonzero parts (equivalently, the number of
nonzero rows of the corresponding Young diagram). We denote by $\Y(N)$ the subset of partitions
with length at most $N$.  It can be identified with a subset of $\GT_N$ (the set of signatures of
length $N$) consisting of signatures with non-negative coordinates. Clearly, $\Y(1)\subset
\Y(2)\subset\dots$.

We denote by $S_{\la|N}$  the  $N$-variate \textit{Schur polynomial} indexed by a
signature $\la\in\GT_N$. It
is a symmetric Laurent polynomial given by
$$
S_{\la|N}(u_1,\dots,u_N)=\frac{\det[u_j^{\la_i+N-i}]_{i,j=1}^N}{\prod_{1\le i<j\le N}(u_i-u_j)} .
$$
 If $\la\in\Y(N)\subset\GT_N$, then $S_{\la|N}$ is an ordinary polynomial.

The following formula is known as the \textit{$q$-specialization formula} for $S_{\la|N}$:
\begin{equation}
\label{eq_q_spec} S_{\la|N}(1,\dots,q^{N-1})=\prod_{1\le i<j\le
N}\frac{q^{\la_j+N-j}-q^{\la_i+N-i}}{q^{N-j}-q^{N-i}}
\end{equation}
(see e.g. Macdonald \cite[Ch. I, Section 3, Example 1]{Mac}). It is immediately obtained from the Vandermonde determinant evaluation.

Let us set
$$
\eps_N=(N-1,N-2,\dots,0), \qquad q^{\la+\eps_N}=(q^{\la_1+N-1}, q^{\la_2+N-2},\dots q^{\la_N})\in\R^N.
$$

 The following \textit{symmetry relation} is easily obtained from the definition of the Schur polynomials:
\begin{equation}\label{eq-symmetry}
 \frac{S_{\la|N}(q^{\nu+\eps_N})}{S_{\la|N}(q^{\eps_N})}= \frac{S_{\nu|N}(q^{\la+\eps_N})}{S_{\nu|N}(q^{\eps_N})}, \qquad \la,\nu\in\GT_N.
\end{equation}

 Given $\nu\in\Y(N)$ and $X=(x_1<\dots<x_N)\in\G_N$, we  set $ S_{\nu|N}(X)=S_{\nu|N}(x_1,\dots,x_N)$ and
\begin{equation}\label{eq-S}
\ti
S_{\nu|N}(X):=\frac{S_{\nu|N}(X)}{S_{\nu|N}(1,\dots,q^{N-1})}=\frac{S_{\nu|N}(X)}{S_{\nu|N}(q^{\eps_N})}.
\end{equation}

The next proposition provides a characterization of the $q$-links by a system of
linear equations on their entries.

\begin{proposition}\label{prop-links}
Let $N>K$ and $X\in\G_N$ be fixed.

{\rm(i)} The row entries $\La^N_K(X,Y)$, where $Y$ ranges over $\G_K$, satisfy the following system of relations
\begin{equation}\label{eq-branching1}
\sum_{Y\in\G_K}\La^N_K(X,Y) \ti S_{\nu|K}(Y)=\ti S_{\nu|N}(X), \qquad \nu\in\Y(K).
\end{equation}

{\rm(ii)} Conversely, let  $M_K$ be a probability measure on $\G_K$ such that $M_K(Y)$ vanishes unless $Y$ is contained in a certain bounded interval  $[a,b]\subset\R$. If the quantities $M_K(Y)$ satisfy the system \eqref{eq-branching1}, that is,
\begin{equation}\label{eq-moments}
\sum_{Y\in\G_K}M_K(Y) \ti S_{\nu|K}(Y)=\ti S_{\nu|N}(X), \qquad \nu\in\Y(K),
\end{equation}
then $M_K=\La^N_K(X,\,\cdot\,)$.
\end{proposition}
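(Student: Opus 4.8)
The plan is to prove the two parts by quite different mechanisms. Part (i) is an explicit identity, which I would first reduce, using the semigroup relation $\La^N_K=\La^N_{N-1}\cdots\La^{K+1}_K$, to the single-step case $N=K+1$; telescoping then recovers the general case, because $\Y(K)\subset\Y(K+1)\subset\cdots$, so every intermediate test function $\ti S_{\nu|m}$ with $\nu\in\Y(K)$ is admissible at each stage. Part (ii) is a uniqueness statement, and once part (i) exhibits $\La^N_K(X,\cdot)$ as one measure satisfying \eqref{eq-moments}, it will follow from the determinacy of a compactly supported moment problem.

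For the single step I would write $\La^{N+1}_N(X,Y)=|Y|\,\Dim Y/\Dim X$ and substitute the determinantal expression for $S_{\nu|N}(Y)$. After the Vandermonde factors coming from $\Dim Y$ and $\Dim X$ (Lemma \ref{Lemma_dimension_eval}) cancel, the desired relation reduces to an identity for the antisymmetrized sum $\sum_{Y\prec X}|Y|\det[y_j^{\ell_i}]_{i,j=1}^N$, where $\ell_i:=\nu_i+N-i$. The structural key is that the interlacing condition $Y\prec X$ is a \emph{product} constraint: each $y_i$ ranges independently over $I(x_i,x_{i+1})$. Hence, expanding the determinant over permutations and summing term by term, the sum factorizes into a product of one-dimensional sums $\sum_{y\in I(x_j,x_{j+1})}|y|\,y^{\ell_{\sigma(j)}}$, each evaluated by Lemma \ref{lemma_summation_geometric}. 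Because the resulting denominators $1-q^{\ell_{\sigma(j)}+1}$ run, as $\sigma$ varies, over the fixed multiset $\{1-q^{\ell_i+1}\}$, they factor out of the whole expression and leave the determinant $\det[x_{j+1}^{\ell_i+1}-x_j^{\ell_i+1}]_{i,j=1}^N$.

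I would then recognize this difference-determinant, via the very column-subtraction manipulation used to prove Lemma \ref{Lemma_dimension_eval} (replace each column by its difference with the previous one and expand along the resulting row $(1,0,\dots,0)$), as $\pm\det[x_j^{\nu_i+N+1-i}]_{i,j=1}^{N+1}$ with $\nu_{N+1}:=0$, i.e.\ as the numerator of $S_{\nu|N+1}(X)$. All that then remains is to match the leftover constant $\prod_i(1-q^{\ell_i+1})/(q;q)_N$ against the normalization ratio $S_{\nu|N}(q^{\eps_N})/S_{\nu|N+1}(q^{\eps_{N+1}})$; this is a direct computation from the $q$-specialization formula \eqref{eq_q_spec}, where splitting off the last column of the product for $S_{\nu|N+1}(q^{\eps_{N+1}})$ produces precisely $S_{\nu|N}(q^{\eps_N})\cdot\prod_i(1-q^{\ell_i+1})/(q;q)_N$. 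This establishes \eqref{eq-branching1} in the single-step case, and telescoping finishes part (i).

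For part (ii), the point is that $\{S_{\nu|K}:\nu\in\Y(K)\}$ is a linear basis of the algebra of symmetric polynomials in $K$ variables, so \eqref{eq-moments} fixes $\sum_Y M_K(Y)\,P(Y)$ for every symmetric polynomial $P$. By part (i) the probability measure $\La^N_K(X,\cdot)$ satisfies the same equations, and it is supported, like $M_K$, on configurations lying in a fixed bounded interval, since the intervals $I(\cdot,\cdot)$ confine every $Y$ with $\La^N_K(X,Y)>0$ to $[x_1,x_N]$. Pushing both measures forward to the compact quotient $[a',b']^K/\mathfrak S_K$ (with $[a',b']$ containing both supports) and invoking Stone--Weierstrass --- symmetric polynomials separate points there and contain the constants, hence are dense in the continuous functions --- I conclude that two finite measures with equal symmetric-polynomial moments agree; injectivity of the sorting map then gives $M_K=\La^N_K(X,\cdot)$. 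The main obstacle is not conceptual but bookkeeping: in part (i) one must justify interchanging the finite permutation sum with the possibly infinite lattice sum when an interval $I(x_j,x_{j+1})$ straddles $0$ (absolute convergence of the geometric sums, valid since $\ell_{\sigma(j)}\ge0$), and one must track the signs and $q$-constants through the determinant and specialization steps accurately.
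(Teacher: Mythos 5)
Your proposal is correct and takes essentially the same route as the paper: part (i) is the paper's argument read in reverse (the paper starts from the $N$-variate determinant, column-subtracts, and uses multilinearity plus Lemma \ref{lemma_summation_geometric} to produce the sum over $Y\prec X$, while you expand that sum over permutations, evaluate the factored one-dimensional geometric sums by the same lemma, and reassemble, with the identical constant-matching via the $q$-specialization formula \eqref{eq_q_spec} and the same semigroup reduction to a single step). Part (ii) likewise coincides with the paper's proof in substance --- bounded support plus the fact that Schur polynomials span all symmetric polynomials determines the symmetrized measure uniquely --- with your Stone--Weierstrass phrasing on the compact quotient being just an explicit version of the paper's appeal to moment determinacy for compactly supported measures.
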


Observe that for any $K<N$ and $Y\in\GT_K$,  the quantity $\La^N_K(X,Y)$ vanishes
unless $Y$ is contained in the closed interval $[x_1,x_N]\subset\R$. It follows that the probability measure $\La^N_K(X,\,\cdot\,)$ on $\G_K$ satisfies the assumption of item (ii) above.

\begin{proof}
(i) Since $\La^N_K=\La^N_{N-1}\La^{N-1}_K$ for $N-K>2$, it suffices to prove \eqref{eq-branching1}
in the case $K=N-1$. Then it takes the form
$$
\det[x_j^{\nu_i+N-i}]_{i,j=1}^N
=(-1)^{N-1} (q;q)_{N-1}\sum_{Y:\, Y\prec X}\frac{S_{\nu|N}(1,\dots,q^{N-1})}{S_{\nu|N-1}(1,\dots,q^{N-2})}|Y| \det[y_j^{\nu_i+N-1-i}]_{i,j=1}^{N-1},
$$
where we assume $\nu\in\Y(N-1)$, write $X=(x_1<\dots<x_N)$, $Y=(y_1<\dots<y_{N-1})$, and the factor
$(-1)^{N-1}$ on the right arises because the product $\prod_{i<j}(x_i-x_j)$ in  the denominator of
the formula for $S_{\nu|N}$  differs by the factor  $(-1)^{N(N-1)/2}$ from the product
$\prod_{i<j}(x_j-x_i)$ in our formula for $\La^N_{N-1}$.

Using the  formula for the $q$-specialization of the Schur polynomials \eqref{eq_q_spec}
 we may rewrite the desired equality as
$$
\det[x_j^{\nu_i+N-i}]_{i,j=1}^N
=(-1)^{N-1}\prod_{i=1}^{N-1}(1-q^{\nu_i+N-i})\sum_{Y:\, Y\prec X}|Y| \det[y_j^{\nu_i+N-1-i}]_{i,j=1}^{N-1}.
$$

Because $\nu\in\Y(N-1)$, we have $\nu_N=0$, so that  the last row of the matrix in the left-hand
side has the form $(1,\dots,1)$.  Thus, we can argue as in the proof of Lemma
\ref{Lemma_dimension_eval}: we transform the matrix in the same way and then apply Lemma
\ref{lemma_summation_geometric}.

(ii) The set $\G_K$ can be viewed as the subset of $\R^K$ formed by the vectors whose coordinates
belong to $\L$ and strictly increase. Therefore, there is a unique extension of $M_K$  to a
symmetric measure of total mass $K!$ on $\L^K\subset\R^K$, where ``symmetric'' means ``invariant
under permutations of the coordinates in $\R^K$''.  Let us denote the latter measure by $\ti M_K$.
By the assumption, the support of $\ti M_K$ is bounded, so that $\ti M_K$ is uniquely determined by
its moments.

On the other hand, the moments of $\ti M_K$ are uniquely determined by the relations
\eqref{eq-moments}. Indeed, these relations provide an explicit expression for the
integrals of arbitrary  Schur polynomials against $\ti M_K$. Because $\ti M_K$  is
symmetric and the Schur polynomials  $S_{\nu|K}$ span the whole space of symmetric
polynomials in $K$ variables, \eqref{eq-moments}  contains the complete information
about the moments of $\ti M_K$.

This completes the proof.
\end{proof}

We have already mentioned that on the subgraph $\G^+\subset\G$, our $q$-links coincide with the
$q$-links for the ordinary Gelfand-Tsetlin graph that were introduced in Gorin \cite{G-AM}. That
is, assuming for simplicity $\z_+=1$, we have
$$
\La^N_K(q^{\la+\eps_N}, q^{\mu+\eps_K})=\ti\La^N_K(\la,\mu), \qquad \la\in\GT_N, \quad \mu\in\GT_K,
$$
where $\ti\La^N_K$ is our notation for the $q$-links from \cite{G-AM}.

As explained in \cite{G-AM}, the links $\ti\La^N_K$ are characterized by the following system of
relations: For fixed $N>K$ and $\la\in\GT_N$,  one has
\begin{equation}\label{eq-branching2}
\sum_{\mu\in\GT_K}\ti\La^N_K(\la,\mu)\frac{S_{\mu|K}(u_1,\dots,u_K)}{S_{\mu|K}(1,q^{-1},\dots,q^{1-K})}=\frac{S_{\la|N}(u_1,\dots,u_K, q^{-K}, \dots,q^{1-N})}{S_{\la|N}(1,q^{-1},\dots,q^{1-N})}.
\end{equation}

Let us show how \eqref{eq-branching2} is connected to \eqref{eq-branching1}.  We rewrite
\eqref{eq-branching2} in the form
$$
\sum_{\mu\in\GT_K}\ti\La^N_K(\la,\mu)\frac{S_{\mu|K}(u_1 q^{K-1},\dots,u_K q^{K-1})}{S_{\mu|K}(q^{K-1}, \dots,1)}=\frac{S_{\la|N}(u_1 q^{N-1},\dots,u_K q^{N-1}, q^{N-K-1}, \dots,1)}{S_{\la|N}(q^{N-1},\dots,1)}.
$$
Now let us substitute
$$
(u_1,\dots,u_K)=(q^{\nu_1}, \dots,q^{\nu_K-K+1}), \qquad \nu\in\Y(K),
$$
and next apply the symmetry relation \eqref{eq-symmetry}. Then we get \eqref{eq-branching1}.

\begin{remark}
The relations \eqref{eq-branching2} say that the links $\ti\La^N_K$ come from the branching rule for appropriately normalized irreducible characters of the unitary groups. This suggests the idea to interpret the functions  $\nu\mapsto\ti S_{\nu|N}(X)$ as a surrogate of characters.
\end{remark}

\section{The $q$-boundary of the extended Gelfand-Tsetlin graph}\label{sect3}

The aim of this section is to identify the \emph{boundary} of the extended Gelfand--Tsetlin graph
equipped with $q$--links. For a historic motivation, we remark that the identification of the
boundary of the conventional Gelfand--Tsetlin graph at $q=1$ is equivalent to the classification of
extreme characters of the group $U(\infty)$ (the extreme characters are in a one-to-one correspondence with the quasi-equivalence classes of finite factor respresentations), cf.\ \cite{BO-AM, Ols-JFA} and references
therein.

\subsection{Preliminaries on  boundaries}

In this short preliminary section we recall general definitions and results related to the notion
of a ``boundary''. The material is more or less standard, but various authors present it in
different ways, see e.g. Dynkin \cite{Dynkin}, Diaconis--Freedman \cite{DF},
Kerov--Okounkov--Olshanski \cite{KOO}, Winkler \cite{Winkler}.

Let $\M(\mathfrak{X})$ denote the set of all probability Borel measures on a given Borel (=measurable) space $\mathfrak{X}$.  By a \textit{link} between Borel spaces $\mathfrak{X}$ and $\mathfrak{Y}$ we mean a Markov kernel $\La=\La(x,dy)$.  We will need only the case when the second space $\mathfrak{Y}$ is discrete. Then a link is a function $\La(x,y)$ on $\mathfrak{X}\times \mathfrak{Y}$, with nonnegative values, which is a Borel function in $x\in \mathfrak{X}$, and such that $\sum_{y\in \mathfrak{Y}}\La(x,y)=1$ for any $x\in \mathfrak{X}$. In particular, if $\mathfrak{X}$ is also discrete, this means that  $\La$ is a stochastic matrix of format $\mathfrak{X}\times \mathfrak{Y}$.  We represent a link by a dash arrow.  Every link $\La:  \mathfrak{X}\dasharrow\mathfrak{Y}$ induces a map $\M(\mathfrak X)\to\M(\mathfrak Y)$, which is an affine map of convex sets. We write it as $M\mapsto M\La$, where the measure $M\in\M(\mathfrak{X})$ is viewed as a row vector.

Assume we are given an infinite  chain
$$
\cdots \dasharrow \Om_{N+1} \dasharrow \Om_N\dasharrow\cdots \dasharrow\Om_2\dasharrow\Om_1,
$$
of discrete spaces together with links $\La^{N+1}_N:\Om_{N+1}\dasharrow\Om_N$.  These links give rise to maps $\M(\Om_{N+1})\to \M(\Om_N)$, which allow us to form the projective limit space $\varprojlim\M(\Om_N)$. By the very definition of projective limit,  elements of the space $\varprojlim\M(\Om_N)$  are infinite sequences $\{M_N\in\M(\Om_N): N=1,2,\dots\}$ satisfying the \textit{coherency relation}
 $$
 M_{N+1}\La^{N+1}_N=M_N, \qquad N=1,2,\dots\,.
 $$
Such sequences $\{M_N\}$ are called \textit{coherent systems} (of probability measures).

In what follows we assume that the set $\varprojlim\M(\Om_N)$ is nonempty.
We equip it with the Borel structure inherited from the natural embedding
$$
\varprojlim\M(\Om_N)\to \prod_{N=1}^\infty\M(\Om_N).
$$

The product space on the right has a natural structure of a convex set and $\varprojlim\M(\Om_N)$  is its convex subset.

\begin{definition}
By the \textit{minimal boundary} of such a chain $\{\Om_N, \La^{N+1}_N: N=1,2,\dots\}$ we mean the set $\Om:=\operatorname{Ex}(\varprojlim\M(\Om_N))$ of the extreme points of $\varprojlim\M(\Om_N)$.
Given $\om\in\Om$, we will denote by $M^{(\om)}=\{M^{(\om)}_N\}$ the coherent system represented by $\om$.
\end{definition}

Below we often drop the adjective ``minimal'' and write simply ``boundary''. 

\begin{theorem}\label{thm-boundary}
The set\/ $\Om$ is a Borel subset of the projective limit space\/ $\varprojlim\M(\Om_N)$, so that we may form the space $\M(\Om)$ of probability Borel measures on\/ $\Om$.

For every coherent system $M=\{M_N\}$ there exists a unique measure $\sigma\in\M(\Om)$ such that $M=\int_\Om M^{(\om)}\sigma(d\om)$ in the sense that
$$
M_N(y)=\int_\Om M^{(\om)}_N(y)\sigma(d\om) \qquad \text{\rm for every $N=1,2,\dots$ and every $y\in\Om_N$.}
$$

Conversely, every measure $\sigma\in\M(\Om)$ generates in this way a coherent system, so that we get a bijection  $\M(\Om)\to\varprojlim\M(\Om_N)$.
\end{theorem}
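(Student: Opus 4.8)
The statement is the standard barycentric (ergodic) decomposition for a projective chain of discrete spaces linked by Markov kernels, and the plan is to realize it through the path space of the associated Markov chain together with the triviality of its tail $\sigma$-algebra, in the spirit of Dynkin and Diaconis--Freedman. First I would record the ambient Borel structure: each $\M(\Om_N)$ is a standard Borel space (the space of probability measures on a countable set), hence so is the countable product $\prod_N\M(\Om_N)$; the coherency relations $M_{N+1}\La^{N+1}_N=M_N$ are Borel conditions, so $\varprojlim\M(\Om_N)$ is a convex Borel, hence standard Borel, subset. To each coherent system $M=\{M_N\}$ I then associate a probability measure $P_M$ on the path space $\TT:=\prod_{N\ge1}\Om_N$ by prescribing the finite-dimensional laws
$$
P_M(t_1=y_1,\dots,t_n=y_n)=M_n(y_n)\,\La^n_{n-1}(y_n,y_{n-1})\cdots\La^2_1(y_2,y_1).
$$
These are consistent precisely by the coherency relation (summing out $y_{n+1}$ and using $M_{n+1}\La^{n+1}_n=M_n$), so Kolmogorov's extension theorem yields a unique $P_M$. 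Under $P_M$ the coordinate $t_N$ has law $M_N$, the sequence $(t_N)$ is a Markov chain with backward transitions $\La$, and I write $\mathcal A_\infty:=\bigcap_m\sigma(t_m,t_{m+1},\dots)$ for its tail $\sigma$-algebra.

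The crux is a dictionary between convex decompositions of $M$ and bounded tail-measurable functions. On the one hand, if $M'$ is another coherent system with $M'\le 2M$ levelwise (automatic when $M=\tfrac12(M'+M'')$), then the ratios $D_n:=M'_n(t_n)/M_n(t_n)$ form a bounded martingale for the filtration $\sigma(t_1,\dots,t_n)$: the martingale identity $\E_{P_M}[D_{n+1}\mid t_n]=D_n$ is exactly the coherency of $M'$ combined with the Bayes formula $P_M(t_{n+1}=z\mid t_n=y)=M_{n+1}(z)\La^{n+1}_n(z,y)/M_n(y)$. Hence $D_n$ converges $P_M$-a.s.\ to a tail-measurable limit. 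On the other hand, every bounded tail-measurable $h\ge0$ with $\E_{P_M}h=1$ defines a coherent system via $M'_N(y):=\E_{P_M}[h\,\one_{t_N=y}]$; coherency here follows from the backward Markov property together with the fact that a tail-measurable $h$ is $\sigma(t_{N+1},t_{N+2},\dots)$-measurable, so that $\E_{P_M}[h\,\one_{t_N=y}\mid t_{N+1}]=h\,\La^{N+1}_N(t_{N+1},y)$. Consequently $M$ is extreme in $\varprojlim\M(\Om_N)$ if and only if $\mathcal A_\infty$ is $P_M$-trivial. I expect establishing this equivalence to be the main obstacle; everything else is routine once it is in place. It also gives the Borel structure of $\Om$, since tail-triviality can be tested on a countable algebra of cylinder events, exhibiting $\Om$ as a countable Boolean combination of Borel conditions.

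Existence of the decomposition then follows by disintegrating $P_M$ over $\mathcal A_\infty$: for $P_M$-almost every $\tau$ the conditional measure $P_M(\,\cdot\mid\mathcal A_\infty)(\tau)$ is tail-trivial, hence of the form $P_{M^{(\om)}}$ for a unique extreme $\om=\om(\tau)\in\Om$, and I set $\sigma:=\om_*P_M$. Integrating the identity
$$
M_N(y)=P_M(t_N=y)=\int P_M(t_N=y\mid\mathcal A_\infty)\,dP_M
$$
over $\TT$ then gives $M_N(y)=\int_\Om M^{(\om)}_N(y)\,\sigma(d\om)$, as required. For uniqueness I would observe that distinct extreme $\om$ yield measures $P_{M^{(\om)}}$ that are tail-trivial and pairwise mutually singular on $\mathcal A_\infty$. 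Any representing $\sigma$ induces $P_M=\int_\Om P_{M^{(\om)}}\,\sigma(d\om)$, since both sides share the finite-dimensional laws and Kolmogorov uniqueness applies; but the decomposition of $P_M$ into mutually singular tail-trivial components is the ergodic decomposition, which is unique, so $\sigma$ is forced to equal $\om_*P_M$.

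Finally, the converse direction of the asserted bijection is immediate: for each fixed $\om$ the system $M^{(\om)}$ is coherent, and coherency is preserved under averaging, so by linearity and Fubini every $\sigma\in\M(\Om)$ produces a coherent system $M_N(y)=\int_\Om M^{(\om)}_N(y)\,\sigma(d\om)$. Combined with the existence and uniqueness above, this shows that $\sigma\mapsto\{M_N\}$ is a bijection $\M(\Om)\to\varprojlim\M(\Om_N)$, completing the proof.
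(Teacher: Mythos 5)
Your proposal is correct, and it is in substance the same proof that the paper's own one-line argument points to: the paper proves Theorem \ref{thm-boundary} by citing Olshanski \cite{Ols-JFA}, Theorem 9.2, whose proof rests on precisely the Dynkin--Diaconis--Freedman dictionary you construct --- coherent systems realized as Markov measures on the path space $\prod_N\Om_N$ with prescribed cotransition kernels $\La^{N+1}_N$, extremality of $M$ equivalent to $P_M$-triviality of the tail $\sigma$-algebra (via your two-way correspondence between bounded tail functions and dominated coherent systems), existence of the representing measure $\sigma$ by disintegration over the tail, and uniqueness from the mutual singularity of distinct tail-trivial components. One minor repair: in checking coherency of $M'_N(y)=\E_{P_M}[h\,\one_{t_N=y}]$ you must condition on the full future $\sigma$-algebra $\sigma(t_{N+1},t_{N+2},\dots)$ rather than on $t_{N+1}$ alone, since pulling the tail-measurable factor $h$ out of the conditional expectation requires the larger $\sigma$-algebra.
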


\begin{proof} See Olshanski \cite[Theorem 9.2]{Ols-JFA}.
\end{proof}

We will say that $\sigma$ is the \textit{boundary measure} of the corresponding coherent system $M=\{M_N\}$.

For every $N$ we define a link $\La^\infty_N: \Om\dasharrow \Om_N$ by setting  $\La^\infty_N(\om,y)=M^{(\om)}_N(y)$ for $y\in\Om_N$. Here we use the fact that $\Om$ is a Borel subset, which guarantees that $\La^\infty_N(\om,y)$ is a Borel function in $\om$.
Note that
$$
\La^\infty_{N+1}\La^{N+1}_N=\La^\infty_N, \qquad N=1,2,\dots,
$$
where the product on the left is the natural composition.

\begin{example}[The Pascal triangle]
Here is a simple illustrative example. Set $\Om_N:=\{0,1,\dots,N\}$ and define the links $\La^{N+1}_N$ by
$$
\La^{N+1}_N(m,m-1)=\frac{m}{N+1}, \quad \La^{N+1}_N(m,m)=\frac{N+1-m}{N+1}
$$
with all other entries $\La^{N+1}_N(m,n)$ being equal to $0$. Then the boundary $\Om$ can be identified with the unit segment $[0,1]$  and the links $\La^\infty_N$ have the form
$$
\La^\infty_N(\om,n)=\binom Nn \om^n(1-\om)^{N-n},
$$
These claims follow from the classical de Finetti theorem, cf.\ Feller \cite[ch. VII, Section 4]{Feller}.
\end{example}

For any $N>K$ we define the link $\La^N_K: \Om_N\dasharrow\Om_K$ as the composition
$$
\La^N_K=\La^N_{N-1}\dots\La^{K+1}_K.
$$

\begin{definition}
An infinite sequence $\{x(N)\in\Om_N\}$ is called \textit{regular} if for fixed every $K=1,2,\dots$ there exists a weak limit of probability measures on $\Om_K$,
$$
M_K:=\lim_{N\to\infty}\La^N_K(x(N),\,\cdot\,),
$$
which is a probability measure, too.  (Since $\Om_K$ is discrete, this simply means convergence on every subset of $\Om_K)$.) Two regular sequences are said to be \textit{equivalent} if the corresponding limit measures coincide for all $K$.  The set of equivalence classes of regular sequences is called the \textit{Martin boundary} of the chain $\{\Om_N, \La^{N+1}_N\}$.
\end{definition}

It is readily seen that the limit measures $M_K$, $K=1,2,\dots$ coming from a regular sequence form a coherent system. This allows us to identify the Martin boundary with a set of coherent systems and hence with a subset of $\M(\varprojlim\Om_N)$.

\begin{theorem} \label{Theorem_minimal_in_Martin}
As a subset of $\M(\varprojlim\Om_N)$, the Martin boundary  contains the minimal
boundary $\Om=\operatorname{Ex}(\M(\varprojlim\Om_N))$. In other words,  any point
$\om\in\Om$ can be approximated by a sequence $\{x(N)\in\Om_N\}$ in the sense that
$$
\lim_{N\to\infty}\La^N_K(x(N),y)=\La^\infty_K(\om,y)
$$
for every fixed $K$ and $y\in\Om_K$.
\end{theorem}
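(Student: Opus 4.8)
The plan is to realize the coherent system $M^{(\om)}=\{M^{(\om)}_N\}$ as the family of one-dimensional marginals of a Markov chain running down the levels of the chain, and then to apply the reverse (downward) martingale convergence theorem. Using the coherency relations $M^{(\om)}_{N+1}\La^{N+1}_N=M^{(\om)}_N$ as the consistency condition, I would first invoke the Kolmogorov extension theorem to produce a probability measure $\PP$ on the path space $\{(x_1,x_2,\dots):x_N\in\Om_N\}$ whose $N$-th marginal is $M^{(\om)}_N$ and whose backward transition kernel from level $N+1$ to level $N$ is $\La^{N+1}_N$. By the Markov property together with the identity $\La^N_K=\La^N_{N-1}\cdots\La^{K+1}_K$, the conditional law of $x_K$ given the upper tail $\mathcal F_N:=\sigma(x_N,x_{N+1},\dots)$ is, for every $N>K$,
$$
\PP(x_K=y\mid\mathcal F_N)=\La^N_K(x_N,y),\qquad y\in\Om_K.
$$

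Now fix $K$ and $y\in\Om_K$. As $N$ grows, the $\sigma$-algebras decrease, $\mathcal F_N\downarrow\mathcal F_\infty:=\bigcap_N\mathcal F_N$, so the random variables $\La^N_K(x_N,y)=\PP(x_K=y\mid\mathcal F_N)$ form a reverse martingale, and L\'evy's downward theorem yields almost sure and $L^1$ convergence
$$
\La^N_K(x_N,y)\xrightarrow[N\to\infty]{}\PP(x_K=y\mid\mathcal F_\infty)=:M^\sharp_K(y).
$$
A short computation using the tower property and the Markov kernel $\La^{K+1}_K$ shows that, for $\PP$-a.e.\ path, the family $\{M^\sharp_K\}$ satisfies $M^\sharp_{K+1}\La^{K+1}_K=M^\sharp_K$, i.e.\ it is again a coherent system, defining a random point of $\varprojlim\M(\Om_N)$.

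The crux is to identify $M^\sharp$ with $M^{(\om)}$. Since $M^\sharp$ is the regular conditional measure of the chain given its tail $\mathcal F_\infty$, it is (by the ergodic decomposition) almost surely \emph{extreme}, so $M^\sharp\in\Om$; its law is then a measure $\sigma\in\M(\Om)$. Taking $\PP$-expectations in the convergence above and using $\E[\La^N_K(x_N,y)]=M^{(\om)}_K(y)$ recovers
$$
M^{(\om)}_K(y)=\int_\Om M^{(\om')}_K(y)\,\sigma(d\om'),\qquad N=1,2,\dots,\ y\in\Om_K,
$$
so $\sigma$ is a boundary measure of $M^{(\om)}$ in the sense of Theorem \ref{thm-boundary}. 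Because $\om$ is extreme, the uniqueness part of that theorem forces $\sigma=\delta_\om$, whence $M^\sharp=M^{(\om)}$ almost surely and $\PP(x_K=y\mid\mathcal F_\infty)=M^{(\om)}_K(y)=\La^\infty_K(\om,y)$.

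Finally, combining this with the martingale convergence gives $\La^N_K(x_N,y)\to\La^\infty_K(\om,y)$ almost surely for each of the countably many pairs $(K,y)$; intersecting the corresponding full-measure events produces a single path $\{x(N)\}$ along which convergence holds simultaneously for all $K$ and all $y\in\Om_K$, which is exactly the approximating sequence claimed. I expect the main obstacle to be the ergodicity step, namely verifying that the tail-conditional coherent system $M^\sharp$ is almost surely extreme so that Theorem \ref{thm-boundary} applies; this is the standard identification of extreme coherent systems with ergodic (trivial-tail) ones, and it is precisely where the hypothesis $\om\in\Om$ enters.
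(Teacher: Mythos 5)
Your strategy is sound, and it is in essence the classical Dynkin-type reverse-martingale argument: note that the paper gives no argument of its own here, but simply cites Okounkov--Olshanski \cite[Theorem 6.1]{OO}, whose proof has exactly this shape, so you have reconstructed the proof behind the citation. One substantive remark concerns the step you yourself flag as the main obstacle: the almost-sure extremality of the tail-conditional system $M^\sharp$ (the ergodic decomposition) is heavier machinery than this theorem needs, and it is not directly available from Theorem~\ref{thm-boundary} as stated --- invoking it risks circularity, since it is essentially the substance of the representation theorem itself. It can be bypassed entirely: extremality of $\om$ by itself forces the tail $\sigma$-algebra $\mathcal F_\infty$ to be $\PP$-trivial. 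Indeed, if $A\in\mathcal F_\infty$ had $0<\PP(A)<1$, then since $A$ is measurable with respect to $\sigma(x_{K+1},x_{K+2},\dots)$ for every $K$, the Markov property shows that the conditioned marginals $\PP(x_K=\,\cdot \mid A)$ and $\PP(x_K=\,\cdot \mid A^c)$ each form coherent systems, and $M^{(\om)}$ is their proper convex combination; extremality forces both to coincide with $M^{(\om)}$, so every tail event is independent of every $x_K$, hence of itself, hence trivial. With $\mathcal F_\infty$ trivial, L\'evy's downward theorem yields $\La^N_K(x_N,y)\to\PP(x_K=y)=M^{(\om)}_K(y)=\La^\infty_K(\om,y)$ almost surely, with no appeal to $M^\sharp$ being extreme and no use of the uniqueness part of Theorem~\ref{thm-boundary}. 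The remaining ingredients of your write-up --- the Kolmogorov extension built from the coherency relations, the identification $\PP(x_K=y\mid\mathcal F_N)=\La^N_K(x_N,y)$, and the extraction of a single approximating path by intersecting the countably many full-measure events indexed by pairs $(K,y)$ --- are all correct as written (only note the typo ``$N=1,2,\dots$'' where ``for all $K$'' is meant in your barycenter identity).
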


\begin{proof}
See Okounkov-Olshanski \cite[Theorem 6.1]{OO}.
\end{proof}

It may happen that the Martin boundary is strictly larger than the minimal boundary. However, in many concrete examples both boundaries coincide.

For instance, in the case of the Pascal triangle, the Martin and minimal boundaries coincide, and the convergence of a sequence $n(N)$  to a boundary point $t\in[0,1]$ means that $\lim_{N\to\infty}n(N)/N=t$.

\subsection{The spaces $\G_\infty$ and $\bar\G_\infty$}\label{sect3.2}

In this section we explain the basic properties of the set $\G_\infty$, which will
later turn out to be the desired boundary of the extended Gelfand--Tsetlin graph.

We say that a configuration $X$ on $\L$ is \textit{bounded} if it is contained in a
bounded closed segment $[a,b]\subset\R$.  The minimal such segment is denoted by
$I(X)$.  Let $\bar\G_\infty$ denote the set of all bounded configurations on $\L$
and $\G_\infty\subset\bar\G_\infty$ be the set of all \textit{infinite}  bounded
configurations. Obviously,
$$
\bar\G_\infty=\G_\infty\sqcup\{\varnothing\}\sqcup\G_1\sqcup\G_2\sqcup\dots\,.
$$

Let us enumerate elements of a configuration $X\in\bar\G_\infty$ in the decreasing
order of their absolute values.  To resolve a possible ambiguity arising when two
elements of opposite sign have the same absolute value (which may happen only if
$\log_q|\z_-|-\log_q\z_+\in\ZZ$) we agree to put the positive element first. We
denote by $(x_{(1)}, x_{(2)}, \dots)$ the resulting sequence and complete it with
infinitely many $0$'s in the case when $X$ finite.

\begin{definition}
We call $(x_{(1)}, x_{(2)}, \dots)$ the \textit{variational series} of $X$.
\end{definition}

We set $\bar\L=\L\cup\{0\}$ and equip this set with the topology induced from the
ambient space $\R$. Let $\bar\L^\infty$ be the infinite product space
$\bar\L\times\bar\L\times\dots$ equipped with the product topology. The
correspondence $X\mapsto (x_{(1)}, x_{(2)}, \dots)$ determines an injective map
$\bar\G_\infty\to\bar\L^\infty$, so that we may regard $\bar\G_\infty$ as a subset
of  $\bar\L^\infty$ and equip it with the induced topology.

\begin{proposition}
In this topology,  $\bar\G_\infty$ is a locally compact space.
\end{proposition}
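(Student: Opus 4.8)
The plan is to exhibit, for every $X\in\bar\G_\infty$, an explicit compact neighborhood. The whole argument rests on a single compactness lemma: for every bounded interval $[a,b]\subset\R$ the set $\Gamma[a,b]:=\{X\in\bar\G_\infty:\ X\subseteq[a,b]\}$ of all (finite or infinite) configurations contained in $[a,b]$ is compact. Granting this, local compactness follows quickly. Since $\bar\L$ is metrizable (a subspace of $\R$), the product $\bar\L^\infty$ is metrizable, hence Hausdorff, and so is $\bar\G_\infty$. Now, given $X\in\bar\G_\infty$ with variational series $(x_{(1)},x_{(2)},\dots)$, put $\rho=|x_{(1)}|$ (so $\rho=0$ when $X=\varnothing$), choose $R>\rho$ and an interval $[a,b]$ with $a<0<b$ and $R\le\min(-a,b)$, and set
$$
U=\{X'\in\bar\G_\infty:\ |x'_{(1)}|<R\}=\bar\G_\infty\cap\pi_1^{-1}\big(\bar\L\cap(-R,R)\big),
$$
where $\pi_1$ is the projection of $\bar\L^\infty$ onto its first coordinate. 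Since $\bar\L\cap(-R,R)$ is open in $\bar\L$, the set $U$ is open and contains $X$; and because the variational series is ordered by decreasing absolute value, every $X'\in U$ satisfies $|x'|\le|x'_{(1)}|<R$ for all its particles $x'$, so $X'\subseteq(-R,R)\subseteq[a,b]$. Thus $X\in U\subseteq\Gamma[a,b]$, exhibiting the compact set $\Gamma[a,b]$ as a neighborhood of $X$.

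To prove the lemma I would work inside the compact space $(\bar\L\cap[a,b])^\infty$. First note that $\bar\L\cap[a,b]$ is compact: the only finite accumulation point of $\L$ is $0$ (the lattice points $\z_\pm q^n$ are isolated and cluster only at $0$), so $\bar\L\cap[a,b]$ is closed and bounded in $\R$. By Tychonoff (or by metrizability) $(\bar\L\cap[a,b])^\infty$ is compact, and $\Gamma[a,b]$ sits inside it as the image of the variational-series embedding. It then suffices to show $\Gamma[a,b]$ is closed there. I would do this sequentially: given $X^{(k)}\in\Gamma[a,b]$ with $x^{(k)}_{(i)}\to y_i$ for every $i$, I must check that $Y=(y_1,y_2,\dots)$ is again the variational series of a configuration in $[a,b]$. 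The inequalities $|x^{(k)}_{(i)}|\ge|x^{(k)}_{(i+1)}|$ pass to the limit, giving $|y_1|\ge|y_2|\ge\cdots$, so the zero entries form a tail and $\{y_i:y_i\ne0\}$ is a configuration contained in $[a,b]$.

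The one genuinely delicate point — and the step I expect to be the main obstacle — is verifying that distinct indices cannot collapse to a common nonzero limit, i.e.\ that the limit is an honest configuration and not a ``multiset''. Here the special topology of $\bar\L$ saves us: away from $0$ the set $\bar\L$ is discrete, so a sequence in $\bar\L$ converging to a nonzero value is eventually constant. Hence if $y_i=y_j\ne0$ for $i<j$, then $x^{(k)}_{(i)}=x^{(k)}_{(j)}$ for all large $k$, contradicting the fact that the particles of the single configuration $X^{(k)}$ are distinct. The same eventual constancy handles the tie-breaking convention (positive particle first when two particles share an absolute value, which can occur only when $\log_q|\z_-|-\log_q\z_+\in\ZZ$): for large $k$ the two relevant coordinates of $X^{(k)}$ already equal $y_i$ and $y_{i+1}$ and are ordered by the convention, so the convention is inherited by $Y$. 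This gives $Y\in\Gamma[a,b]$, completing the proof of the lemma and hence of local compactness. A sanity check worth recording is that neighborhoods of a finite configuration legitimately contain infinite configurations (with extra particles escaping toward $0$), consistent with $\Gamma[a,b]$ containing both finite and infinite configurations.
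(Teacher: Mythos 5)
Your proof is correct, and its skeleton is the same as the paper's: both exhibit, around each point, an open set of configurations with uniformly bounded maximal modulus sitting inside a compact set of the same kind (your $\Gamma[a,b]$ is essentially the paper's $\{X\in\bar\G_\infty: |x_{(1)}|\le\const\}$, which the paper observes is simultaneously open and compact, so it does not even need your two-step $U\subseteq\Gamma[a,b]$ arrangement --- every nonzero point of $\bar\L$ is isolated, so the non-strict sublevel set is already open). Where you genuinely diverge is in how compactness is justified. The paper derives it in one line from the exponential decay bound \eqref{eq-bound} on the variational series, leaving implicit the verification that the set of variational-series images is closed in the Tychonoff product; you instead prove that closedness directly, and your mechanism is different: not the decay estimate, but the discreteness of $\bar\L$ away from $0$, which forces coordinatewise-convergent sequences with nonzero limits to be eventually constant. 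This is exactly the right tool for the two delicate points you isolate --- ruling out distinct indices collapsing to a common nonzero limit (so the limit is a set, not a multiset) and checking that the positive-first tie-breaking convention survives the limit --- neither of which the paper's bound addresses by itself. In short, your argument is more detailed and self-contained on the one step that actually requires care, and it shows that \eqref{eq-bound} is not needed for this proposition (the paper uses that bound again later, e.g.\ for the continuity of $S_\nu$ on $\bar\G_\infty$, so it is not wasted there); the paper's route buys brevity at the cost of an unspoken closedness verification.
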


\begin{proof}
From the definition of the variational series it follows that its terms decay exponentially fast.
More precisely, one has
\begin{equation}\label{eq-bound}
|x_{(2n)}|\le |x_{(1)}|q^{n-1}, \quad |x_{(2n+1)}|\le |x_{(1)}|q^n, \qquad n=1,2,\dots,
\end{equation}
because among the first $m$ terms of the series one can always find at least $\lceil
m/2\rceil$ terms of the same sign. It follows from \eqref{eq-bound}  that every
subset of the form $\{X\in\bar\G_\infty: |x_{(1)}|\le\const\}$ is compact. On the
other hand, such subsets are open and, as the constant factor gets large, they
exhaust the whole space. This completes the proof.
\end{proof}

The next proposition shows that the topology on  $\bar\G_\infty$ can be also
described  without recourse to variational series. We use this result below in the
proof of Corollary \ref{corrfunctions}.

\begin{proposition}\label{prop-topology}
A fundamental system of neighborhoods of a given configuration $X\in\bar\G_\infty$ is formed by the
``$\eps$-neighborhoods'' $V_\eps(X)$,  $\eps>0$, where  $V_\eps(X)$ consists of all
configurations that coincide with $X$ outside the open interval $(-\eps,\eps)$.
\end{proposition}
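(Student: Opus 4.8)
The plan is to prove the two descriptions of the topology on $\bar\G_\infty$ coincide by exploiting the one essential feature of the lattice $\bar\L$: its only accumulation point is $0$, so that every nonzero point of $\L$ is isolated and, for every $\eps>0$, the set $\L\setminus(-\eps,\eps)$ is finite. I would first record this observation together with its immediate consequence: if $Y$ coincides with $X$ outside $(-\eps,\eps)$, then the particles of $Y$ lying outside $(-\eps,\eps)$ are the finitely many particles of $X$ there, while the rest have absolute value $<\eps$, so $Y$ is again a bounded configuration. Hence $V_\eps(X)\subset\bar\G_\infty$ is well defined, contains $X$, and the family $\{V_\eps(X)\}$ decreases as $\eps\downarrow0$.

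The core step is to rewrite $V_\eps(X)$ through the variational series. Let $x_{(1)},\dots,x_{(k)}$ be the terms of the variational series of $X$ with $|x_{(i)}|\ge\eps$, so that $k=k(X,\eps)$ counts the particles of $X$ outside $(-\eps,\eps)$. Since these are exactly the particles of largest absolute value, I claim
$$
V_\eps(X)=\bigl\{Y\in\bar\G_\infty:\ y_{(i)}=x_{(i)}\ (1\le i\le k),\ |y_{(k+1)}|<\eps\bigr\}.
$$
Indeed, $Y\in V_\eps(X)$ means $Y$ carries exactly the points $x_{(1)},\dots,x_{(k)}$ among particles of absolute value $\ge\eps$; by monotonicity of the variational series this is the same as fixing the first $k$ terms and demanding $|y_{(k+1)}|<\eps$. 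The right-hand side is the intersection of $\bar\G_\infty$ with a cylinder set cut out by the open conditions $y_{(i)}\in\{x_{(i)}\}$ for $i\le k$ (an open singleton, since each $x_{(i)}\ne0$ is isolated in $\bar\L$) and $y_{(k+1)}\in(-\eps,\eps)\cap\bar\L$. Thus each $V_\eps(X)$ is open in the variational-series topology, hence an open neighborhood of $X$.

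It remains to show the $V_\eps(X)$ are cofinal, i.e.\ every basic product neighborhood $W$ of $X$ contains some $V_\eps(X)$. Such a $W$ is determined by open neighborhoods $U_1,\dots,U_m$ of $x_{(1)},\dots,x_{(m)}$ in $\bar\L$. I would split into two cases. If $X$ has at least $m$ particles, then $x_{(1)},\dots,x_{(m)}$ are all nonzero; taking any $\eps$ with $0<\eps\le|x_{(m)}|$ gives $k(X,\eps)\ge m$, so by the displayed formula $V_\eps(X)$ forces $y_{(i)}=x_{(i)}\in U_i$ for $i\le m$, whence $V_\eps(X)\subset W$. If $X$ has only $j<m$ particles, then $x_{(j+1)}=\dots=x_{(m)}=0$, each $U_i$ with $i>j$ contains some $(-\delta_i,\delta_i)\cap\bar\L$, and choosing $\eps\le\min(|x_{(j)}|,\delta_{j+1},\dots,\delta_m)$ makes $V_\eps(X)$ fix $y_{(i)}=x_{(i)}\in U_i$ for $i\le j$ and force $|y_{(i)}|<\eps\le\delta_i$, hence $y_{(i)}\in U_i$, for $j<i\le m$; again $V_\eps(X)\subset W$.

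This is essentially routine point-set topology; the only points requiring care are the bookkeeping of the zero coordinates appearing in the variational series of a finite configuration, and the harmless possibility of a tie in absolute value between a positive and a negative particle, which is absorbed by letting $k$ include all tied terms. No genuine obstacle arises beyond keeping this bookkeeping consistent.
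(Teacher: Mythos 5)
Your proof is correct and takes essentially the same route as the paper: both identify $V_\eps(X)$ with the open cylinder set $\{Y:\ y_{(i)}=x_{(i)}\ \text{for}\ i\le k(X,\eps),\ |y_{(k+1)}|<\eps\}$, using the monotonicity of the variational series and the fact that every nonzero point of $\bar\L$ is isolated, and then check cofinality against cylinder neighborhoods. Your single uniform formula merely merges the paper's two-case treatment ($X\in\G_\infty$ versus $X$ finite) and in passing also covers $X=\varnothing$, which the paper's case split leaves implicit.
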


\begin{proof}
We examine separately two cases: $X\in\G_\infty$ and $X\in\G_N$ for some $N$.

\emph{Step} 1. Assume $X\in\G_\infty$.  For $k=1,2,\dots$ we set
$$
U_k(X)=\{Z\in\bar\G_\infty: z_{(1)}=x_{(1)}, \dots, z_{(k)}=x_{(k)}\}.
$$
By the very definition of the topology in $\bar\G_\infty$, the sets $U_k(X)$ form a fundamental
system of neighborhoods of $X$ (here we use the fact that $x_{(n)}\ne0$ for all $n=1,2,\dots$\,.

For every fixed $k$, the set $V_\eps(X)$ is contained in $U_k(X)$ if $\eps$ is so small that all the points $x_{(1)},\dots,x_{(k)}$ lie outside $(-\eps,\eps)$.

On the other hand, every set $V_\eps(X)$ is open. Indeed, let $k$ be the first integer such that $x_{(k+1)}$ lies in the interval $(-\eps,\eps)$. Then all points $x_{(n)}$ with $n\ge k+1$ also lie in that interval. It follows that $V_\eps(X)$ coincides with set
$$
\{Z\in\bar\G_\infty: z_{(1)}=x_{(1)}, \dots, z_{(k)}=x_{(k)}, \quad |z_{(k+1)}|<\eps\},
$$
which is obviously open. This proves the desired claim in the case $X\in\G_\infty$.

\emph{Step} 2. Assume now $X\in\G_N$. Then we modify the above argument as follows. We take $k>N$ and replace the sets $U_k(X)$ by the sets of the form
$$
U_{k,\de}(X)=\{Z\in\bar\G_\infty: z_{(1)}=x_{(1)}, \dots, z_{(N)}=x_{(N)}; \qquad |z_{(n)}|<\de, \quad N+1\le n\le k\},
$$
where $\de>0$.
These sets form a fundamental system of neighborhoods of $X$.

Evidently,  $V_\eps(X)$ is contained  in $U_{k,\de}$ if $\eps$ is so small than $\eps<\de$ and $X$ lies outside $(-\eps,\eps)$.

Finally, we have to prove that $V_\eps(X)$ is open. It actually suffices to do this for small $\eps$ --- so small that $X$ lies outside $(-\eps,\eps)$. Then $V_\eps(X)$ coincides with the open set
$$
\{Z\in\bar\G_\infty: z_{(1)}=x_{(1)}, \dots, z_{(N)}=x_{(N)}, \quad |z_{(N+1)}|<\eps\}.
$$

This completes the proof.
\end{proof}

Note that a sequence $\{X(N)\in\G_N: N=1,2,\dots\}$ converges to a configuration $X\in\G_\infty$ if and only if the variational series of $X(N)$ stabilizes to the variational series of $X$ in the sense that for every fixed $k=1,2,\dots$, there exists $N_0(k)$ such that $x_{(k)}(N)=x_{(k)}$ for all $N\ge N_0(k)$.

Note also that any configuration $X\in\G_\infty$ is a limit of a sequence $\{X(N)\in\G_N:
N=1,2,\dots\}$. For instance, one can take as $X(N)$ the collection of the first $N$ terms of the
variational series of $X$.

We need one more remark. Let $S_\nu$ denote the Schur function indexed by a partition $\nu\in\Y$.
Given $X\in\bar\G_\infty$, we denote by  $S_\nu(X)$ the evaluation of $S_\nu$ at the collection
$\{x:\, x\in X\}$. The definition makes sense because, due to the bound \eqref{eq-bound}, the sums
$\sum_{x\in X}|x|^k$, $k=1,2,\dots$, are finite.  The same bound also shows that  the function
$S_\nu(X)$ is continuous on $\bar\G_\infty$.

\subsection{Formulation of results}

 \label{Section_formulation_boundary}

For the sake of readability, our description of the boundary of the chain $\{\G_N, \La^{N+1}_N\}$ is divided into four claims.

\begin{theorem}\label{thm1}
The Martin boundary of the chain  $\{\G_N, \La^{N+1}_N\}$ can be identified with the set \/
$\G_\infty$ of infinite bounded configurations on the lattice $\L$.

More precisely, a sequence $\{X(N)\in\G_N\}$ is regular if and only if $\{X(N)\}$  stabilizes to a configuration $X\in\G_\infty$, and the correspondence  $\{X(N)\}\mapsto X$
 established in this way determines a bijection of the Martin boundary onto\/ $\G_\infty$.
 \end{theorem}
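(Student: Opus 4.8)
\emph{Strategy.} The plan is to read off regularity from the asymptotics of the normalized Schur functions. By Proposition~\ref{prop-links}(i), for every $K<N$ the row $\La^N_K(X(N),\cdot)$ is the unique bounded-support probability measure on $\G_K$ whose ``Schur moments'' are prescribed by
$$
\sum_{Y\in\G_K}\La^N_K(X(N),Y)\,\ti S_{\nu|K}(Y)=\ti S_{\nu|N}(X(N)),\qquad\nu\in\Y(K).
$$
Since the $S_{\nu|K}$ span the symmetric polynomials in $K$ variables, regularity is governed by the behaviour of the numbers $\ti S_{\nu|N}(X(N))$ as $N\to\infty$. I would prove the two implications ``stabilizes $\Rightarrow$ regular'' and ``regular $\Rightarrow$ stabilizes'' separately, and then assemble the bijection. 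In fact it suffices to work at level $K=1$: there $\ti S_{(n)|1}(y)=y^{n}$, so $\ti S_{(n)|N}(X(N))$ is the $n$-th moment of $M_1^{(N)}:=\La^N_1(X(N),\cdot)$; and since $\ti S_{(n)|N}(X)=S_{(n)|N}(X)/S_{(n)|N}(q^{\eps_N})$ with $S_{(n)|N}(q^{\eps_N})\to 1/(q;q)_n\neq0$, the full sequence of these moments determines all the power sums $p_k(X(N))=\sum_{x\in X(N)}x^k$, and hence the configuration itself.

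\emph{Sufficiency.} Suppose $X(N)$ stabilizes to some $X\in\G_\infty$. Then $\min X(N)$ and $\max X(N)$ converge, so by the support property recorded after Proposition~\ref{prop-links} all the measures $\La^N_K(X(N),\cdot)$ are supported in one fixed bounded interval for large $N$. For each fixed $\nu$ the numerator $S_{\nu|N}(X(N))$ converges to $S_\nu(X)$ by the continuity of $S_\nu$ on $\bar\G_\infty$ established at the end of Section~\ref{sect3.2} (equivalently, the power sums converge thanks to the geometric bound \eqref{eq-bound}), while the denominator $S_{\nu|N}(q^{\eps_N})$ converges to the principal specialization by \eqref{eq_q_spec}. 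Thus every Schur moment converges; uniform boundedness of supports gives tightness, and a compactly supported measure is determined by its moments, so $\La^N_K(X(N),\cdot)$ converges weakly to a limit $M_K$. The only delicate point is that no mass may escape to the accumulation point $0$ of $\L$ (the sole way distinct lattice particles could merge), so that $M_K$ is a genuine probability measure on $\G_K$; this is guaranteed by the exponential decay \eqref{eq-bound} of the variational series of $X$. Hence $\{X(N)\}$ is regular.

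\emph{Necessity.} This is the heart of the argument. Assume $\{X(N)\}$ is regular. Using the level $K=1$ relation, the quantities $S_{(n)|N}(X(N))/S_{(n)|N}(q^{\eps_N})$ are exactly the moments of the weakly convergent measures $M_1^{(N)}$. The main task --- and the principal obstacle --- is to upgrade this weak convergence into uniform control of the moments, i.e.\ to show that regularity forbids escape of mass both to $\pm\infty$ and to $0$. For escape to infinity one uses that $M_1^{(N)}$ concentrates near the extreme particles: an interlacing argument shows that $\max X(N)$ and $\min X(N)$ lie in the support of $M_1^{(N)}$, and a direct estimate of the link weights (in the spirit of the identity $\sum_{c\in I(a,b)}|c|=(b-a)/(1-q)$ from Lemma~\ref{lemma_summation_geometric}, together with $S_{(2)|N}(X)\ge\tfrac12\sum_{x\in X}x^2\ge\tfrac12\,r(X)^2$ where $r(X):=\max_{x\in X}|x|$) should show that an unbounded top particle would carry away a positive fraction of the total mass, contradicting $M_1$ being a probability measure; a symmetric estimate near the origin rules out escape to $0$. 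These are the quantitative estimates, for which I would follow Gorin--Panova \cite{GP}. Once the supports of $M_1^{(N)}$ are uniformly bounded and tight, all moments converge, so $p_k(X(N))\to\pi_k$ exists for every $k\ge1$. The counting measures $\sum_{x\in X(N)}\delta_x$, restricted to any region $\{|x|\ge\eps\}$, then live on a finite subset of $\L$ carrying at most $\pi_2/\eps^2$ particles, with convergent moments; since configurations are simple, each occupancy $\mathbf1[y\in X(N)]$ is integer-valued and so must stabilize. Therefore $X(N)$ stabilizes to a configuration $X$, bounded by the no-escape-to-infinity estimate and infinite by the no-escape-to-$0$ estimate; thus $X\in\G_\infty$.

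\emph{The bijection.} By the necessity step a regular sequence stabilizes to a well-defined $X\in\G_\infty$, and $X$ is recovered from $M_1$ alone (its moments yield all $S_{(n)}(X)$, hence all power sums, hence the configuration). Consequently equivalent regular sequences --- having the same limit measures --- produce the same $X$, so the correspondence descends to the Martin boundary. It is injective because, by the sufficiency step, two sequences stabilizing to the same $X$ have identical limit measures $M_K$, determined by the values $S_\nu(X)/S_\nu(1,q,q^2,\dots)$, and are therefore equivalent. It is surjective because, given any $X\in\G_\infty$, the truncations $X(N)$ formed by the first $N$ terms of the variational series of $X$ stabilize to $X$ (as noted in Section~\ref{sect3.2}) and hence are regular with limit $X$. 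This identifies the Martin boundary with $\G_\infty$.
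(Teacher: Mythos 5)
Your skeleton matches the paper's in outline --- both directions hinge on the Schur-moment characterization of Proposition~\ref{prop-links}, and your treatment of the bijection (truncations of the variational series for surjectivity, moment-uniqueness for injectivity) is essentially the paper's. But the two load-bearing quantitative facts are missing, and the substitutes you sketch do not work. In the sufficiency direction, your claim that escape of mass to $0$ ``is guaranteed by the exponential decay \eqref{eq-bound} of the variational series of $X$'' is not correct: \eqref{eq-bound} is a deterministic statement about a single bounded configuration, and says nothing about the distribution of the \emph{random} configuration under $\La^N_K(X(N),\,\cdot\,)$. Nor can a soft moment argument close this: the weak limit of the symmetrized measures on $([a,b]\cap\bar\L)^K$ is indeed determined by its moments, but nothing in the moment data rules out a priori that this limit charges configurations with particles at $0$ --- to exclude that you would need to already know a probability measure on $\G_K$ with the prescribed moments exists, which is what is being proved. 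The paper supplies exactly this via Lemma~\ref{lemma2}, a uniform (in $N$ and $X$) tail bound near $0$ for one step of the link, proved by explicit conditional estimates on ratios of link weights, combined with Lemma~\ref{lemma1} and interlacing.

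In the necessity direction the gap is sharper. Your proposed contradiction --- an unbounded top particle forces a divergent second moment via $S_{(2)|N}(X)\ge\tfrac12\,r(X)^2$, contradicting $M_1$ being a probability measure --- fails, because divergent moments are perfectly compatible with weak convergence to a probability measure (an escaping particle can carry mass $o(1)$ while keeping the second moment large, as in $\mu_N=(1-N^{-2})\delta_0+N^{-2}\delta_N$). What you actually need is a \emph{lower} bound on the probability that the extreme particles are retained under the links; this is the paper's Lemma~\ref{lemma1}: with probability at least $1-e^{-cN}/c$, uniformly in $X\in\G_N$, one step of $\La^N_{N-1}$ preserves the top $k$ terms of the variational series, so the product over levels $L,L+1,\dots$ stays close to $1$. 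Deferring to Gorin--Panova does not supply this --- these one-step estimates on the double lattice are the paper's own technical core (the paper explicitly says its approach ``combines quantitative estimates with some ideas from \cite{GP}''). Note also that once Lemma~\ref{lemma1} is available, the paper's necessity argument is much shorter than your occupancy-from-moments route: the law $\xi_N$ of the $k$th variational term of the $\La^N_L(X(N),\,\cdot\,)$-random configuration gives weight $>2/3$ to $x_{(k)}(N)$, so weak convergence of $\xi_N$ (forced by regularity) immediately compels $x_{(k)}(N)$ to stabilize, with no moment bookkeeping at all. Your occupancy-stabilization argument could be made rigorous, but only after the same two lemmas are in place, so as written the proposal assumes precisely what is hardest.
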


Recall  that every element of the Martin boundary is represented by a coherent system
$M=(M_1,M_2,\dots)$, and denote by $M^{(X)}=(M^{(X)}_1,M^{(X)}_2,\dots)$  the coherent system
corresponding to a given configuration $X\in\G_\infty$. The next theorem provides a
characterization of the measures $M^{(X)}_K$.

In the next theorem we use the notation
$$
\ti S_\nu(X)=\frac{S_\nu(X)}{S_\nu(1,q,q^2,\dots)},
$$
cf. \eqref{eq-S}.

\begin{theorem}\label{thm2}
Let us fix $X\in\G_\infty$ and  $K=1,2,\dots$, and let $Y$ range over $\G_K$. Then $M^{(X)}_K(Y)$ vanishes unless $I(Y)\subseteq I(X)$ and we have
$$
\sum_{Y\in\G_K, \, I(Y)\subseteq I(X)}M^{(X)}_K{(X)}(Y) \ti S_{\nu|K}(Y)=\ti S_\nu(X) \qquad \text{\rm for any $\nu\in\Y(K)$.}
$$
Moreover, $M^{(X)}_K$ is the only probability distribution on $\G_K$ with these properties.
\end{theorem}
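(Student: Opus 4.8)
The plan is to realize the measures $M^{(X)}_K$ as limits of the row measures $\La^N_K(X(N),\,\cdot\,)$ along a regular sequence, and then to pass to the limit in the characterization of Proposition \ref{prop-links}. By Theorem \ref{thm1}, taking $X(N)\in\G_N$ to consist of the first $N$ terms of the variational series of $X$ produces a regular sequence stabilizing to $X$, and by the definition of the Martin boundary one has $M^{(X)}_K(Y)=\lim_{N\to\infty}\La^N_K(X(N),Y)$ for every $Y\in\G_K$, the limit being a genuine probability measure on $\G_K$.

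First I would settle the support statement. Since the variational series of $X(N)$ stabilizes to that of $X$, for all large $N$ the two extreme (most positive and most negative) particles of $X(N)$ coincide with those of $X$, whence $I(X(N))=I(X)=[a,b]$. By the remark following Proposition \ref{prop-links}, $\La^N_K(X(N),Y)=0$ unless $Y\subseteq[a,b]$, i.e.\ unless $I(Y)\subseteq I(X)$; passing to the pointwise limit gives $M^{(X)}_K(Y)=0$ outside $\{I(Y)\subseteq I(X)\}$. In particular all the measures $\La^N_K(X(N),\,\cdot\,)$ (for large $N$) and $M^{(X)}_K$ are supported on the fixed countable set $\{Y\in\G_K: Y\subseteq[a,b]\}$.

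Next I would pass to the limit in the identity of Proposition \ref{prop-links}(i),
$$
\sum_{Y\in\G_K}\La^N_K(X(N),Y)\,\ti S_{\nu|K}(Y)=\ti S_{\nu|N}(X(N)),\qquad \nu\in\Y(K).
$$
On the right, $S_{\nu|N}(X(N))=S_\nu(X(N))$ once $N\ge K\ge\ell(\nu)$, and since $X(N)\to X$ in $\bar\G_\infty$ (stabilization) the continuity of Schur functions on $\bar\G_\infty$, which rests on the exponential decay \eqref{eq-bound}, gives $S_{\nu|N}(X(N))\to S_\nu(X)$; likewise the principal specializations satisfy $S_{\nu|N}(q^{\eps_N})=S_\nu(1,q,\dots,q^{N-1})\to S_\nu(1,q,q^2,\dots)\neq0$, so the right-hand side tends to $\ti S_\nu(X)$. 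On the left, the mass functions $\La^N_K(X(N),\,\cdot\,)$ converge pointwise to the probability mass function $M^{(X)}_K$; by Scheff\'e's lemma this upgrades to total-variation convergence, $\sum_Y|\La^N_K(X(N),Y)-M^{(X)}_K(Y)|\to0$. As $\ti S_{\nu|K}(Y)$ is a polynomial in the coordinates of $Y$, it is bounded on the common compact support $[a,b]^K$, which lets me exchange limit and summation. This yields the asserted relation $\sum_Y M^{(X)}_K(Y)\,\ti S_{\nu|K}(Y)=\ti S_\nu(X)$.

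Finally, uniqueness is exactly the argument of Proposition \ref{prop-links}(ii): a probability measure $M_K$ on $\G_K$ supported on $\{I(Y)\subseteq I(X)\}$ extends to a symmetric measure of mass $K!$ on the compact cube $[a,b]^K$, hence is determined by its moments, and the displayed relations fix the integral of every Schur polynomial $S_{\nu|K}$ against it (the right-hand side $\ti S_\nu(X)$ being a fixed number), which pins down all moments since the $S_{\nu|K}$, $\nu\in\Y(K)$, span the symmetric polynomials in $K$ variables. The step I expect to be the main obstacle is the interchange of limit and infinite summation on the left: it is essential first to pin down the common compact support $[a,b]$, so that the test function is bounded, and then to promote pointwise convergence of the mass functions to total-variation convergence via Scheff\'e's lemma. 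By contrast, the convergence of the right-hand side and the uniqueness are routine given Proposition \ref{prop-links}.
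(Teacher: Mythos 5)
Your argument is correct and is essentially the paper's own proof: the paper likewise realizes $M^{(X)}_K$ as the limit of $\La^N_K(X(N),\,\cdot\,)$ along a sequence stabilizing to $X$, passes to the limit in Proposition \ref{prop-links}(i) (justifying the interchange on the left by weak convergence combined with the uniformly bounded supports, where you use the slightly more explicit route via Scheff\'e's lemma), and then invokes the proof of Proposition \ref{prop-links}(ii) for uniqueness. One small repair to your support step: if $X$ lies entirely in $\L^+$ (or in $\L^-$), then $0$ is an endpoint of $I(X)$ and $I(X(N))\ne I(X)$ for every $N$, so your claim that the extreme particles of $X(N)$ eventually coincide with those of $X$ fails in that case; however, your conclusion only needs the inclusion $[x_1(N),x_N(N)]\subseteq I(X)$, which always holds since $X(N)\subseteq X$.
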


The following theorem is a law of large numbers.

\begin{theorem}\label{thm3}
Let us fix $X\in\G_\infty$. Given $L=1,2,\dots$, we consider $Y\in\G_L$ as a random element with law $M^{(X)}_L$, so that the terms $y_{(1)}(L),  y_{(2)}(L), \dots$ of the variational series of $Y$ become random variables.

For any fixed $k=1,2,\dots$, the probability of the event $y_{(k)}(L)=x_{(k)}$ tends to\/ $1$ as $K\to\infty$.
\end{theorem}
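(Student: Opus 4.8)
The plan is to derive the statement from the stronger assertion that the random configuration $Y$ with law $M^{(X)}_L$ converges in probability to $X$ as $L\to\infty$, in the topology of $\bar\G_\infty$ from Section~\ref{sect3.2}. By Theorem~\ref{thm2}, $M^{(X)}_L$ is supported on
\[
\mathcal K:=\{Y\in\bar\G_\infty:\ I(Y)\subseteq I(X)\},
\]
which is compact: the bound \eqref{eq-bound} confines $\mathcal K$ to a set of the form $\{|x_{(1)}|\le\const\}$, and $\mathcal K$ is closed since the condition $I(Y)\subseteq I(X)$ is preserved under limits in $\bar\G_\infty$. On $\mathcal K$ each coordinate $Y\mapsto y_{(j)}$ is continuous by the very definition of the topology, and each $x_{(j)}$ is an isolated point of $\bar\L$ (the sole accumulation point of $\L$ is $0$, while $x_{(j)}\ne0$ because $X$ is infinite). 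Hence convergence in probability $Y\to X$ in $\mathcal K$ forces $\PP(y_{(j)}(L)=x_{(j)}\text{ for all }j\le k)\to1$, which in particular gives the claim.

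As coordinates detecting this convergence I would use the power sums $p_m(Y)=\sum_{y\in Y}y^m$, $m\ge1$, which are finite and continuous on $\mathcal K$ by \eqref{eq-bound}. The first step is to show $p_m(Y)\to p_m(X)$ in probability for each fixed $m$. Rewriting Theorem~\ref{thm2} as
\[
\E\!\left[S_{\nu|L}(Y)\right]=\frac{S_{\nu|L}(q^{\eps_L})}{S_\nu(1,q,q^2,\dots)}\,S_\nu(X),\qquad \nu\in\Y(L),
\]
and expanding $p_m=\sum_{r=0}^{m-1}(-1)^rS_{(m-r,1^r)}$ into hook Schur functions (valid once $L\ge m$), I would compute $\E[p_m(Y)]$ termwise. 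Since $S_{\nu|L}(q^{\eps_L})=S_{\nu|L}(1,q,\dots,q^{L-1})\to S_\nu(1,q,q^2,\dots)$ as $L\to\infty$, each specialization ratio tends to $1$ and $\E[p_m(Y)]\to p_m(X)$. Expanding $p_m^2$ in the Schur basis and repeating the computation gives $\E[p_m(Y)^2]\to p_m(X)^2$, so $\operatorname{Var}(p_m(Y))\to0$ and $p_m(Y)\to p_m(X)$ in $L^2$, hence in probability.

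To upgrade these scalar convergences to convergence in $\mathcal K$, I would use that $\{p_m\}_{m\ge1}$ separates points of $\mathcal K$. If $p_m(Y)=p_m(Y')$ for all $m$, put $\nu=\sum_{y\in Y}\delta_y-\sum_{y'\in Y'}\delta_{y'}$; then $t\,\nu(dt)$ is a finite signed measure on the compact interval $I(X)$ with all moments vanishing, hence $t\,\nu(dt)=0$, so $\nu$ is carried by $\{0\}$, and since neither configuration charges $0$ we get $Y=Y'$. As $\mathcal K$ is compact and the $p_m$ are continuous and separating, $Y\mapsto(p_m(Y))_{m\ge1}$ is a homeomorphism onto its image; thus the topology of $\mathcal K$ is metrized by $d(Y,Y')=\sum_m2^{-m}\min(1,|p_m(Y)-p_m(Y')|)$. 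A routine truncation of this sum shows that $p_m(Y)\to p_m(X)$ in probability for all $m$ implies $d(Y,X)\to0$ in probability, i.e.\ $Y\to X$ in probability in $\mathcal K$. Together with the reduction of the first paragraph, this finishes the proof.

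The expectation and variance computations are routine manipulations with the Schur-function apparatus of Section~\ref{sect2} (the most laborious being the Schur expansion of $p_m^2$). The genuinely delicate point, and the one I expect to be the main obstacle, is the separation of points of $\mathcal K$ by power sums: the auxiliary measure $t\,\nu(dt)$ is forced on us precisely because configurations in $\mathcal K$ may accumulate at $0$ and carry infinite total mass, so that the moment problem for $\nu$ itself is ill-posed and must be regularized by the extra factor $t$.
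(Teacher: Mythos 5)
Your proposal is correct, but it takes a genuinely different route from the paper's proof. The paper argues quantitatively: taking $X(N)$ to be the configuration formed by the first $N$ terms of the variational series of $X$, it invokes Lemma \ref{lemma1} through the product bound $\prod_{n\ge L_0}\bigl(1-e^{-cn}/c\bigr)\ge 1-\eps$ to conclude that $\La^N_L(X(N),\,\cdot\,)$ gives mass at least $1-\eps$ to the cylinder set $\G_L(x_{(1)},\dots,x_{(k)})$ for all $N>L\ge L_0$, and then uses the weak convergence $\La^N_L(X(N),\,\cdot\,)\to M^{(X)}_L$ supplied by Theorem \ref{thm1} to transfer this bound to $M^{(X)}_L$. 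You instead take the moment characterization of Theorem \ref{thm2} as the starting point and run a second-moment (concentration) argument: your ingredients all check out --- the hook expansion of $p_m$ and the Schur expansion of $p_m^2$ are finite and valid for $L\ge 2m$, the specialization ratios $S_{\nu|L}(q^{\eps_L})/S_\nu(1,q,q^2,\dots)$ do tend to $1$, evaluation at $X$ is a specialization homomorphism so the limiting sums recombine into $p_m(X)$ and $p_m(X)^2$, the set $\mathcal K$ is indeed compact (closedness of $I(Y)\subseteq I(X)$ plus the paper's local-compactness proposition), your regularized moment argument with $t\,\nu(dt)$ correctly handles the accumulation at $0$ and separates points, and the isolatedness of nonzero points of $\bar\L$ legitimately converts metric convergence in probability into exact equality of the first $k$ terms of the variational series. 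What each approach buys: yours is softer and essentially self-contained given Theorem \ref{thm2}, avoiding a second pass through the estimates of Lemma \ref{lemma1}; the paper's argument is shorter at this point and, more importantly, yields the bound $M^{(Z)}_L(\G_L(z_{(1)},\dots,z_{(k)}))\ge 1-\eps$ with $L=L(k,\eps)$ \emph{uniform over the boundary point} $Z\in\G_\infty$, and exactly this uniformity is reused in the proof of Theorem \ref{thm4} (extremality of the $M^{(X)}$). Your Chebyshev step depends on $X$ both through $p_m(X)$ and through the radius of the metric ball contained in the cylinder neighborhood, so it proves Theorem \ref{thm3} as stated but could not be substituted for the Lemma \ref{lemma1}-based estimate in the subsequent extremality argument.
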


Finally, we identify the minimal boundary.

\begin{theorem}\label{thm4}
The minimal boundary of the chain $\{\G_N, \La^{N+1}_N\}$ coincides with the Martin boundary and
hence can be identified with\/ $\G_\infty$. Moreover, under this identification the Borel structure
of the minimal boundary coincides with the natural Borel structure of the space\/ $\G_\infty$.
\end{theorem}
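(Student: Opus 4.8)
The plan is to establish Theorem \ref{thm4} in two parts: first, that the minimal boundary coincides with the Martin boundary (hence is identified with $\G_\infty$ as a set), and second, that the identification is a Borel isomorphism. By Theorem \ref{Theorem_minimal_in_Martin} we already know the minimal boundary is contained in the Martin boundary, so the first part reduces to proving the reverse inclusion: every coherent system $M^{(X)}$ arising from a point $X\in\G_\infty$ is in fact an extreme point of $\varprojlim\M(\G_N)$. By the general theory recalled in Theorem \ref{thm-boundary}, every coherent system decomposes as $M^{(X)}=\int_\Om M^{(\om)}\,\sigma(d\om)$ against a unique boundary measure $\sigma\in\M(\Om)$; extremality of $M^{(X)}$ is equivalent to $\sigma$ being the delta-mass at $X$. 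So the crux is to show that the decomposing measure $\sigma$ for $M^{(X)}$ is supported at the single point $X$.

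The natural tool for this is the law of large numbers, Theorem \ref{thm3}. The idea is that for a general boundary measure $\sigma$, the coherent system $M_L=\int_\Om M^{(\om)}_L\,\sigma(d\om)$ can be sampled by first drawing $\om$ according to $\sigma$ and then drawing $Y\in\G_L$ from $M^{(\om)}_L$. Theorem \ref{thm3} tells us that, conditionally on $\om$, the variational series terms $y_{(k)}(L)$ converge in probability to $\om_{(k)}$ as $L\to\infty$. First I would make this precise as a recovery statement: the map $\om\mapsto\om_{(k)}$ can be reconstructed $\sigma$-almost surely from the asymptotics of the random configurations $Y$, so that $\sigma$ is determined by the coherent system and, in particular, the $\om$ drawn is $\sigma$-a.s.\ recoverable. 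Applying this to $\sigma=\delta_X$ (i.e.\ to the coherent system $M^{(X)}$ itself) shows that any other measure $\sigma'$ also yielding $M^{(X)}$ must have $\om_{(k)}=x_{(k)}$ for all $k$ almost surely, forcing $\sigma'=\delta_X$. This pins down extremality of $M^{(X)}$ and hence the coincidence of the two boundaries.

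For the Borel-structure statement, I would compare two a priori different $\sigma$-algebras on $\G_\infty$: the one inherited as a subset of $\varprojlim\M(\G_N)$ via the embedding $X\mapsto M^{(X)}$ (the definition of the minimal boundary's Borel structure), and the natural Borel structure of $\G_\infty$ coming from its topology as a subspace of $\bar\L^\infty$ described in Section \ref{sect3.2}. The map $X\mapsto M^{(X)}$ is a bijection by Theorem \ref{thm1}, so it suffices to check measurability in both directions. In one direction, for each fixed $K$ and $Y\in\G_K$ the function $X\mapsto M^{(X)}_K(Y)=\La^\infty_K(X,Y)$ is obtained as a pointwise limit $\lim_N\La^N_K(X(N),\cdot)$ along a stabilizing sequence, and using the stabilization characterization of convergence in $\G_\infty$ together with the explicit characterization in Theorem \ref{thm2} one sees these coordinate functions are Borel in $X$. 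In the other direction, the variational-series coordinates $X\mapsto x_{(k)}$ generate the Borel structure of $\G_\infty$, and each such coordinate is recoverable from the coherent system through the law-of-large-numbers limit, hence is measurable with respect to the projective-limit Borel structure. Both $\sigma$-algebras being countably generated and the bijection being Borel both ways, they coincide.

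The main obstacle I anticipate is the extremality argument, specifically converting the in-probability convergence of Theorem \ref{thm3} into an almost-sure recovery of the parameter $\om$ from the decomposing measure $\sigma$. The difficulty is that Theorem \ref{thm3} is a statement \emph{for each fixed} $X$, whereas extremality requires controlling the behavior \emph{simultaneously} across the support of an arbitrary $\sigma$, and interchanging the $\sigma$-integration with the large-$L$ limit needs care (a dominated-convergence or Fubini-type justification, using the uniform exponential decay \eqref{eq-bound} of variational-series terms to get the required integrability and tightness). Once the recovery map is shown to be well-defined $\sigma$-a.s.\ and measurable, the remaining steps are essentially bookkeeping with the already-established Theorems \ref{thm1}--\ref{thm3}.
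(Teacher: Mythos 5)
Your proposal is correct in outline and, for the extremality half, follows essentially the paper's route: the inclusion of the minimal boundary in the Martin boundary comes from Theorem \ref{Theorem_minimal_in_Martin}, and the heart of the matter is showing that the boundary measure $\sigma$ assigned to $M^{(X)}$ by Theorem \ref{thm-boundary} is $\delta_X$, using the law of large numbers. The differences lie in the two technical steps, and they are worth recording. First, the obstacle you flag --- upgrading the fixed-$X$ statement of Theorem \ref{thm3} to something valid simultaneously across the support of an arbitrary $\sigma$ --- is resolved in the paper not by an interchange-of-limits argument but by uniformity already built into Lemma \ref{lemma1}: the constant $c$ there does not depend on the configuration, so for every $\eps>0$ one can choose a single $L(k,\eps)$ with $M^{(Z)}_L(\G_L(z_{(1)},\dots,z_{(k)}))\ge 1-\eps$ for \emph{all} $Z\in\G_\infty$ at once; a cylinder set of positive $\sigma$-mass distinct from the one determined by $X$ then gives $\eps\ge\de(1-\eps)$, a contradiction. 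Your dominated-convergence alternative also works, but the domination needed is simply that the integrand is bounded by $1$; the decay bound \eqref{eq-bound} is not the relevant ingredient, and invoking it for ``integrability and tightness'' is a red herring. Second, for the Borel statement the paper is slimmer than your two-way measurability check: since $\G_\infty$ and $\varprojlim\M(\G_N)$ are standard Borel spaces, Mackey's theorem \cite[Theorem 3.2]{Mackey} guarantees that the injective map $X\mapsto M^{(X)}$ has Borel image and is a Borel isomorphism onto it as soon as it is Borel in \emph{one} direction, and that direction is verified by writing $M^{(X)}_K(Y)=\lim_{N\to\infty}\La^N_K(X(N),Y)$ with $X(N)$ the first $N$ terms of the variational series, a pointwise limit of cylinder functions. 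Your sketch of the inverse direction via LLN-recoverability of the coordinates $x_{(k)}$ can be made rigorous (the sets where $M^{(\cdot)}_L(\{Y: y_{(k)}=t\})\to 1$ are Borel in the projective-limit structure), so your route is viable but heavier, and the closing appeal to countable generation is superfluous once both directions are Borel. One organizational caveat: the extremality argument already requires transporting $\sigma$ to a Borel measure on $\G_\infty$ and needs measurability of $Z\mapsto M^{(Z)}_L(\,\cdot\,)$, which is exactly why the paper establishes the Borel isomorphism \emph{before} the extremality step; you should reorder your two parts accordingly, or at least run the inverse-direction measurability argument first.
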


This implies, in particular, that the link $\La^\infty_K$ has format $\G_\infty\times\G_K$ and, for every $X\in\G_\infty$, the probability distribution $\La^\infty_K(X,\,\cdot\,)$ coincides with $M^{(X)}_K$.

The proofs are given in the next two subsections.

\subsection{Qualitative estimates}

A crucial part of the proof of theorems of Section
\ref{Section_formulation_boundary} is contained in the two qualitative lemmas which
we present next.

\begin{lemma} \label{lemma1}
Fix an arbitrary $k=1,2,\dots$. Assume  $N>k$, let  $X\in\G_N$ be arbitrary, and let $Y\in\G_{N-1}$
be the random configuration distributed according to $\La^N_{N-1}(X,\,\cdot\,)$.  Finally, let
$(x_{(i)})$ and $(y_{(i)})$ be the variational series for $X$ and $Y$, respectively.

 Then for some $c>0$
$$
\PP\big( y_{(1)}=x_{(1)},\dots, y_{(k)}=x_{(k)}\big)\ge 1- \frac {e^{-c N}}{c},
$$
where $c$ might depend on $q$, $\z_\pm$, but not on $X$.
\end{lemma}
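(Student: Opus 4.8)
The plan is to reduce the matching of the first $k$ terms of the variational series to the rigidity of finitely many ``extreme slots'' of the interlacing $Y\prec X$, and then to estimate each such slot separately by a uniform per-particle contraction. Write $X=(x_1<\dots<x_N)$. The admissible set $\{Y:Y\prec X\}$ is exactly the product $\prod_{i=1}^{N-1}\bigl(I(x_i,x_{i+1})\cap\L\bigr)$: the constraints $y_i\in I(x_i,x_{i+1})$ are imposed slot by slot, and the ordering $y_1<\dots<y_{N-1}$ is automatic, since the right endpoint of $I(x_i,x_{i+1})$ never exceeds the left endpoint of $I(x_{i+1},x_{i+2})$. Consequently a positive particle $x_p$ belongs to $Y$ iff the slot $y_{p-1}$ sits at its right endpoint $x_p$, and a negative $x_p$ belongs to $Y$ iff $y_p$ sits at its left endpoint $x_p$. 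The terms $x_{(1)},\dots,x_{(k)}$ are the $k$ particles of largest absolute value and hence lie at the two ends of $X$. If $Y$ retains all of them, they are automatically the first $k$ terms of the variational series of $Y$: every particle of $Y$ stays in $[x_1,x_N]$, and each non-retained slot lies in an interval both of whose endpoints have absolute value $<|x_{(k)}|$, so no spurious large particle can appear. (When $N$ exceeds a threshold $N_0(k)$ the two particles adjacent to $0$ are not among $x_{(1)},\dots,x_{(k)}$, which keeps this dichotomy clean; for the finitely many smaller $N$ the asserted bound is vacuous once $c$ is small and may be discarded.) Thus the complement of our event lies in $\bigcup_{r\le k}\{\text{the slot attached to }x_{(r)}\text{ is not pushed to }x_{(r)}\}$, and a union bound reduces the lemma to a one-slot estimate $\PP(\text{slot of }x_{(r)}\text{ not pushed})\le\const\cdot\rho^{N}$ with $\rho<1$.

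Fix $r$ and suppose $x_{(r)}=x_p>0$, so the slot is $y_{p-1}$, whose competing values are $x_pq^j$, $j\ge1$ (for $p<N$ the possible straddling and opposite-sign values are treated the same way and are more strongly suppressed by the magnitude factor below). By the definition of $\La^N_{N-1}$ together with Lemma \ref{Lemma_dimension_eval}, $\La^N_{N-1}(X,Y)=(q;q)_{N-1}\,|Y|\,\prod_{i<j}(y_j-y_i)\big/\prod_{i<j}(x_j-x_i)$. Using the product structure above, the factors that involve $y_{p-1}$ split off cleanly, and the ratio of marginals is an honest expectation,
\[
\frac{\PP(y_{p-1}=x_pq^j)}{\PP(y_{p-1}=x_p)}=q^{\,j}\,\E_\mu\Big[\,\prod_{i<p-1}\frac{x_pq^j-y_i}{x_p-y_i}\;\prod_{i>p-1}\frac{y_i-x_pq^j}{y_i-x_p}\,\Big],
\]
where $\mu$ is the probability measure on the remaining slots proportional to $\prod_{i\ne p-1}|y_i|\cdot\prod_{i<j}(y_j-y_i)\cdot\prod_{i<p-1}(x_p-y_i)\prod_{i>p-1}(y_i-x_p)$ (all factors positive). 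No inequality is lost, because fixing $y_{p-1}$ does not move the feasible ranges of the other slots.

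The point is that the few large particles dominate. For any slot with $|y_i|\le|x_p|$, monotonicity of $y\mapsto(y+A)/(y+B)$ gives the uniform bound $(x_pq^j-y_i)/(x_p-y_i)\le\tfrac{1+q^j}2<1$ (and $\le q^j$ when $y_i>0$). A slot can violate $|y_i|\le|x_p|$ only when its interval abuts one of the $r-1$ particles of absolute value exceeding $|x_{(r)}|$, so there are at most $2(r-1)\le2k$ such slots; for the ``lower'' ones the factor is still $<1$, while for the at most $r-1$ ``upper'' slots ($y_i>x_p$, whence $y_i\ge x_pq^{-1}$) one has $(y_i-x_pq^j)/(y_i-x_p)\le(1-q^{j+1})/(1-q)\le(1-q)^{-1}$. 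Hence the bracket is at most $(1-q)^{-(k-1)}\bigl(\tfrac{1+q^j}2\bigr)^{N-2k}$, and summing over the competing values yields
\[
\PP(y_{p-1}\ne x_p)\le(1-q)^{-(k-1)}\Big(\tfrac{1+q}2\Big)^{N-2k}\sum_{j\ge1}q^{\,j}\le\const(k,q)\,\Big(\tfrac{1+q}2\Big)^{N}.
\]
The negative extremes are handled by the mirror-image computation; feeding this into the union bound gives the claim with any $c<-\log\tfrac{1+q}2$ (shrinking $c$ so that $e^{-cN}/c$ also covers the small values of $N$, for which the bound is vacuous).

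The technical crux is the bookkeeping tying the reduction and the contraction together: one must argue that exactly $r-1$ particles beat $|x_{(r)}|$ in absolute value, so that only $O(k)$ slots are ``exceptional,'' and that the exceptional \emph{upper} slots (whose factor exceeds $1$) are tamed by the lattice gap $y_i\ge x_pq^{-1}$; at the same time one must deal with ties of equal absolute value and with the single interval straddling $0$. Once the clean product description of $\{Y\prec X\}$ is in hand, the uniform contraction $\tfrac{1+q^j}2$ acting on the $N-O(k)$ ordinary slots does all the work, and what remains is the summation of a geometric series.
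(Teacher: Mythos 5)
Your route is genuinely different from the paper's: where the paper fixes all coordinates of $Y$ but one and runs an induction on $k$ through conditional single-slot distributions (estimates \eqref{eq.A}, \eqref{eq.B} and their refinements), you exploit the product structure of $\{Y\prec X\}=\prod_i\bigl(I(x_i,x_{i+1})\cap\L\bigr)$ to write each \emph{marginal} ratio $\PP(y_{p-1}=x_pq^j)/\PP(y_{p-1}=x_p)$ as $q^j$ times an expectation, and then use a union bound over the $k$ extreme slots instead of a recursion. That identity is correct (for competitor values that are feasible, which is the only case contributing), the per-factor bounds $(1+q^j)/2$, $q^j$, and $(1-q)^{-1}$ check out, and in the generic situation --- competitor values of the same sign as $x_{(r)}$ --- your computation is a clean and arguably simpler substitute for the paper's (A)--(B) scheme.

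However, there is a genuine gap exactly where you wave: the straddling interval. Your parenthetical claim that for $N>N_0(k)$ the two particles flanking the origin are not among $x_{(1)},\dots,x_{(k)}$ is false: a configuration with a single negative particle of huge modulus and $N-1$ small positive particles has $x_{(1)}$ adjacent to $0$ for every $N$, so no threshold in $N$ alone, uniform in $X$, can exclude this. Consequently the opposite-sign competitor values $v$ for the slot of such an extreme cannot be dismissed by the blanket remark that they are ``more strongly suppressed by the magnitude factor'': the prefactor $|v|/|x_{(r)}|$ is $\le 1$ only when the flanking particle on the opposite side has modulus at most $|x_{(r)}|$, and the factors from slots on the far side of $x_{(r)}$, of the form $(y_i+|v|)/(y_i-x_{(r)})$, each \emph{exceed} $1$ and can be large when $|v|$ is. What saves the argument is a structural fact you never prove: if the slot of an extreme $x_{(r)}>0$ straddles the origin, then $x_{(r)}$ is the smallest positive particle, so every other positive particle beats it in modulus and the number of positive particles is at most $k$ (hence at most $k-1$ far-side factors, each $\le$ const by the lattice gap); and if moreover the flanking negative had modulus $>|x_{(r)}|$, then \emph{all} negatives would be higher extremes, forcing $N\le k$, contrary to hypothesis. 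Only with this case analysis (the mirror statement for negative extremes, and the tie conventions on the doubled lattice handled as the paper's tie-breaking rule allows) does your union bound close; as written, the crucial case is asserted, not proved --- and the one concrete assertion offered in its support is wrong. This is precisely the bookkeeping that the paper's inductive conditioning (fixing the already-matched higher extremes, so that the analogue of your far-side product has exactly $s\le k-1$ factors bounded via $|y^*|\le y_j$) is designed to make automatic.
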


\begin{proof}
Write as usual $X=(x_1<\dots<x_N)$, $Y=(y_1<\dots<y_{N-1})$. To make clear the main idea of the
argument we first examine the simplest case when $k=1$.  There are two subcases depending on
whether $x_{(1)}$ is positive or negative, but in these two cases the argument is the same. For
definiteness we will assume that $x_{(1)} >0$. Then $y_{(1)}=x_{(1)}$ means $y_{N-1}=x_N$, so that
we have to prove that the probability of the event $y_{N-1} < x_N$ is exponentially small as $N$
gets large.

Let us fix $y_1,\dots,y_{N-2}$ and consider the corresponding conditional distribution of $y_{N-1}$. Below the symbol $\PP(\,\cdot\,)$ refers to this distribution.

It suffices to prove that $\PP\big(y_{N-1}<x_N\big)$
 is exponentially small with some constants that can be chosen independently of $X$ and $y_1,\dots,y_{N-2}$.

We introduce a shorthand notation:
$$
x:=x_N, \qquad y:=y_{N-1}, \qquad y^*:=y_{N-2}.
$$

We are going to prove the following  two estimates (uniform on $X$ and $y_1,\dots,y_{N-2}$):
\begin{gather}
\PP\big(y=x q\big)\le c_1 e^{-c_2 N}\PP\big(y=x\big),  \label{eq.A} \\
\PP\big(y\le xq\big)\le c_3 \PP\big(y=x q\big)     \label{eq.B}.
\end{gather}
They obviously imply the desired estimate.

Recall that the probability of a given configuration $Y\in\G_{N-1}$ is given by
$$
\PP(Y)=\La^N_{N-1}(X,Y)=\prod_{i=1}^{N-1}|y_i|\cdot\dfrac{(q;q)_{N-1}\prod_{1\le i<j\le N-1}(y_j-y_i)}{\prod_{1\le i<j\le N}(x_j-x_i)}\cdot\one_{Y\prec X}.
$$
It implies
\begin{equation}\label{eq.C}
\frac{\PP\big(y=x q\big)}{\PP\big(y=x\big)}=q\prod_{i=1}^{N-2}\frac{x q-y_i}{x-y_i}.
\end{equation}
Evidently, all the $N-2$ factors in the product on the right are strictly positive and less than 1. We claim that, moreover, they
are uniformly separated from 1. Indeed, for every $i=1,\dots,N-2$,
$$
\frac{xq-y_i}{x-y_i}=\dfrac{q-y_i/x}{1-y_i/x},
$$
which does not exceed $q$ if $y_i$ is positive, and does not exceed $(1+q)/2$ if $y_i$ is negative
(the latter holds because $|y_i|\le x_N$). This implies \eqref{eq.A}.

To verify \eqref{eq.B} we consider separately two cases depending on whether $y^*$
is positive or negative.

Assume $y^*$ is positive, which means $y^*=xq^m$ with some positive integer $m$. Then, in \eqref{eq.B},  $y$ may take the values $xq^n$, where $n=1,2,\dots,m-1$, and we use the trivial bound
\begin{equation}\label{eq.D}
\frac{\PP\big(y=xq^n\big)}{\PP\big(y=xq\big)}=q^{n-1}\prod_{i=1}^{N-2}\frac{x q^{n}-y_i}{x q-y_i}\le q^{n-1}, \qquad n=1,2,\dots,
\end{equation}
which immediately gives us \eqref{eq.B}.

If $y^*$ is negative, then, in \eqref{eq.B},  $y$ takes positive values $xq,
xq^2,\dots$ and negative values $y^*q, y^*q^2,\dots$\,. The probabilities of the
positive values are estimated as above. For the negative values we have
\begin{equation}\label{eq.E}
\frac{\PP\big(y=y^*q^n \big)}{\PP\big(y=x q\big)}=\frac{|y^*|}{x}q^{n-1}
\prod_{i=1}^{N-2}\frac{|y_i|-|y^*|q^n}{|y_i|+x q}\le q^{n-1}
\prod_{i=1}^{N-2}\frac{|y_i|}{|y_i|+x q}.
\end{equation}
Since $|y_i|\le x$ for all $i$, every factor in the last product does not exceed $(1+q)^{-1}$. So the product becomes exponentially small as $N$ gets large, while summation over $n=1,2,\dots$ brings only the constant factor $(1-q)^{-1}$. This completes the proof of \eqref{eq.B}.

Let us proceed now to the case $k>1$. Let the first $k-1$ terms of the variational series for $X$
involve $r$ negative elements and $s$ positive elements, where $r+s=k-1$, and assume for
definiteness that the $k$th term is positive (in the case it is negative, the argument is the
same). We want to prove that, with exponentially small probability,
$$
\text{\rm $y_i=x_i$ for $i=1,\dots, r$; \quad   $y_{N-j}=x_{N+1-j}$ for $j=1,\dots,s+1$}.
$$
Using induction on $k$, it suffices to obtain a uniform exponentially small bound for the probability of the event $ y_{N-s-1}<x_{N-s}$ under the condition that the above equalities hold for all $i=1,\dots,r$ and for all $j=1,\dots,s$, and all the remaining elements in $Y$ are fixed.

Now we can repeat the argument for the case $k=1$. Below we only indicate necessary minor modifications. Using the shorthand notation
$$
x:=x_{N-s}, \qquad y:=y_{N-s-1}, \qquad y^*:=y_{N-s-2}
$$
we reduce the desired claim to the same two estimates \eqref{eq.A} and \eqref{eq.B}, as before.

The equality \eqref{eq.C} changes into
\begin{equation}
\label{eq_x17} \frac{\PP\big(y=x q\big)}{\PP\big(y=x\big)}=q\prod_{i=1}^{N-s-2}\frac{x
q-y_i}{x-y_i} \prod_{j=N-s}^{N-1}\frac{y_j-x q}{y_j-x},
\end{equation}
where $\PP$ now denotes the conditional distribution of $y_{N-s-1}$ given the rest. The first
product in \eqref{eq_x17} is still exponentially small as before. As for the second product,
because $x/y_j\le q$, each of its factor admits the bound
$$
\frac{y_j-x q}{y_j-x}=\frac{1-qx/y_j}{1-x/y_j}\le\frac1{1-q},
$$
so that that product does not exceed $(1-q)^{-s}$.

Next, \eqref{eq.D} changes into
\begin{equation*}
\frac{\PP\big(y=xq^n\big)}{\PP\big(y=xq\big)}
=q^{n-1}\prod_{i=1}^{N-2-s}\frac{x q^{n}-y_i}{x q-y_i}\prod_{j=N-s}^{N-1}\frac{y_j-x q^n}{y_j-x q}, \qquad n=1,2,\dots,
\end{equation*}
Here the first product is at most 1, as before, while the second product is bounded from above by $(1-q^2)^{-s}$.

Finally, \eqref{eq.E} takes the form
$$
\frac{\PP\big(y=y^*q^n \big)}{\PP\big(y=x q\big)}=\frac{|y^*|}{x}q^{n-1}
\prod_{i=1}^{N-2}\frac{|y_i|-|y^*|q^n}{|y_i|+xq}\, \prod_{j=N-s}^{N-1}\frac{y_j+|y^*|q^n}{y_j-xq}.
$$
Here we observe that, because $|y^*|\le y_j$, the second product is bounded from above by $2^s(1-q^2)^{-s}$.

These are all necessary modifications.
\end{proof}

Given an $N$--point configuration $X\in\GTe_N$, we denote by $x_{(1)},\dots,x_{(N)}$
its variational series. We also denote by  $Y$ and $y_{(1)},\dots,y_{(N-1)}$ the
random $\La^N_{N-1}(X,\,\cdot\,)$
 distributed configuration in $\GTe_{N-1}$ and its variational series, respectively. Since $Y\prec X$ with probability 1, we have $|y_{(k)}|\le|x_{(k)}|$ for every $k<N$, which in turn implies that $\log_q(|y_{(k)}|)\ge\log_q(|x_{(k)}|)$.

\begin{lemma} \label{lemma2}
Let $k=1,2,\dots$ be fixed. Then, as $C>0$ gets large, the probability of  the event $\log_q(|y_{(k)}|)-\log_q(|x_{(k)}|)\ge C$ tends to\/ $0$ uniformly on $N>k$ and $X\in\G_N$.
\end{lemma}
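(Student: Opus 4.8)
The plan is to translate the event into a statement about particle counts, and then to reduce it—via the interlacing structure—to a one-particle tail estimate that is essentially already contained in the proof of Lemma \ref{lemma1}.

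First I set $t:=|x_{(k)}|q^{C}$ and note that, since $\log_q$ is decreasing, the event $\log_q|y_{(k)}|-\log_q|x_{(k)}|\ge C$ is exactly $\{|y_{(k)}|\le t\}$, i.e.\ $\{\#\{y\in Y:|y|>t\}\le k-1\}$. Because $Y\prec X$ is ordinary interlacing on the real line ($x_1\le y_1\le x_2\le\dots\le x_N$), I would split the top $k$ particles of $X$ into $s$ positive and $r$ negative ones ($r+s=k$). The interlacing inequalities $y_{N-j}\ge x_{N-j}$ and $y_{j}\le x_{j+1}$ force the $s-1$ largest positive and the $r-1$ most negative $y$-particles to have absolute value $\ge|x_{(k)}|>t$, so $k-2$ particles lie above $t$ automatically. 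Consequently, if \emph{both} remaining ``boundary'' particles $y_{N-s}$ (the $s$-th positive from the top) and $y_{r}$ (the $r$-th negative) had $|y|>t$, there would be $k$ particles above $t$, contradicting the event. Hence
$$
\{|y_{(k)}|\le t\}\subseteq\{y_{N-s}\le t\}\cup\{|y_{r}|\le t\},
$$
and it suffices to bound each single-particle probability by $\const(k,q)\,q^{C}$, uniformly in $N$ and $X$. The degenerate cases $r=0$ or $s=0$ only shrink the event and are covered by the same one-particle bound.

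Next I would estimate $\PP(y_{N-s}\le t)$ by conditioning on all other $y$-particles and analyzing the conditional law of $y_{N-s}$ inside its slot $I(x_{N-s},x_{N-s+1})$. The crucial observation is that the upper end $b:=x_{N-s+1}$ satisfies $b\ge|x_{(k)}|$, so $t\le bq^{C}$ and ``$y_{N-s}\le t$'' means dropping at least $C$ $q$-steps below the top of the slot. From the explicit formula for $\PP(Y)=\La^{N}_{N-1}(X,Y)$, the conditional ratios are exactly of the type computed in Lemma \ref{lemma1}: writing $y_{N-s}=bq^{n}$,
$$
\frac{\PP(y_{N-s}=bq^{n}\mid\cdot)}{\PP(y_{N-s}=bq\mid\cdot)}=q^{\,n-1}\prod_{i}\frac{|bq^{n}-y_i|}{|bq-y_i|},
$$
cf.\ \eqref{eq.D} and \eqref{eq_x17}. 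Since no other $y_i$ lies in the slot, each factor coming from a $y_i$ below the slot is $\le1$, while—this is the decisive structural point—there are exactly $s-1\le k-1$ particles above the slot, each contributing a factor $\le(1-q^{2})^{-1}$; hence the whole product is bounded by $(1-q^{2})^{-(k-1)}$, independently of $N$. Summing $q^{n-1}(1-q^{2})^{-(k-1)}$ over $n\ge C$, and treating the negative values of $y_{N-s}$ exactly as in \eqref{eq.E}, yields $\PP(y_{N-s}\le t\mid\cdot)\le\const(k,q)\,q^{C}$; averaging over the conditioning and arguing symmetrically for $y_{r}$ completes the bound on $\PP(|y_{(k)}|\le t)$.

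The main obstacle is isolating the correct single particle so as to secure uniformity in $N$: a naive one-particle estimate produces a constant raised to the number of particles lying above the chosen one, which would blow up as $N\to\infty$. The reduction above is engineered precisely so that the critical particle $y_{N-s}$ has only $s-1\le k-1$ particles above it, all of them forced to be large, which caps the dangerous product at $(1-q^{2})^{-(k-1)}$ uniformly. The residual work—verifying the interlacing count in the degenerate cases and reproducing the negative-value estimate \eqref{eq.E} with $k$-dependent constants—is routine and parallels Lemma \ref{lemma1} almost verbatim.
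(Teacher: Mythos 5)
Your argument is correct in substance, and it organizes the reduction to single-particle estimates differently from the paper. The paper proves Lemma \ref{lemma2} by the same scheme as Lemma \ref{lemma1}: the case $k=1$ is handled by the conditional one-particle tail bounds (the analogues of \eqref{eq.D} and \eqref{eq.E}, with the reference point at the top of the slot, giving geometric decay of the tail uniformly in the length of the progression), and the case $k\ge2$ is then dispatched by recursion on $k$, conditioning on the higher terms of the variational series, with the extra product over the $\le k-1$ particles lying above the critical one bounded exactly as in \eqref{eq_x17}. You replace the recursion by a deterministic counting step: interlacing forces the $s-1$ largest positive and the $r-1$ most negative particles of $Y$ to have modulus $\ge|x_{(k)}|>t$, so the event $\{|y_{(k)}|\le t\}$ is contained in a union of two single-particle events for the boundary particles $y_{N-s}$ and $y_r$, each of which is controlled by the same conditional ratio estimates, with the dangerous product capped by $(1-q^2)^{-(k-1)}$ uniformly in $N$ --- the same structural point the paper exploits in the $k>1$ part of Lemma \ref{lemma1}. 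The analytic core is thus identical; what your non-inductive reduction buys is an explicit treatment of general $k$ with transparent uniformity in $N$ and $X$, and in fact a quantitative rate $\const(k,q)\,q^{C}$, which is stronger than the mere convergence asserted in the statement; the paper's recursion is terser but leaves the $k\ge2$ bookkeeping implicit.

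One slip should be corrected: the displayed containment must read $\{|y_{(k)}|\le t\}\subseteq\{|y_{N-s}|\le t\}\cup\{|y_{r}|\le t\}$. As you wrote it, the event $\{y_{N-s}\le t\}$ also includes negative values of $y_{N-s}$ of \emph{large} modulus, and its probability need not be small: when the slot $I(x_{N-s},x_{N-s+1})$ extends far into the negative axis, $y_{N-s}$ can sit near its negative end with nonnegligible probability (indeed, summing the \eqref{eq.E}-type bound over all $n$ only yields a constant, not a small quantity). What your estimates actually control --- the positive-side geometric sum of the ratios $q^{n-1}(1-q^2)^{-(k-1)}$ over $n\gtrsim C$, together with the \eqref{eq.E}-type bound for negative values of \emph{small} modulus --- is precisely the modulus-small event $\{|y_{N-s}|\le t\}$, and that is also the event your counting argument requires (if $|y_{N-s}|>t$ and $|y_r|>t$, the $k-2$ forced particles already give $k$ particles of modulus exceeding $t$). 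With this correction, and the routine caveat that the reference value in the ratio comparison must be an admissible lattice point of the slot (otherwise the event is empty and the bound trivial), the proof goes through.
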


\begin{proof}
We closely follow  the general plan of the proof of Lemma \ref{lemma1} and keep to the notation of
that lemma. As in that lemma, we are dealing with conditional distributions. The difference is that
instead of exponentially small bounds for large $N$ we now establish uniform bounds for tails of
distributions.

We start again with  the case $k=1$ and we assume for definiteness $x_{(1)}>0$. We have to find a
uniform bound of the probability that the ratio $|y|/x$ is small.  The argument has two parts
depending on the sign of $y^*$.

Assume $y^*$ is positive, i.e.\ $y^*=x q^m$, then $y$ may take the values $x q^n$ with
$n=0,\dots,m-1$. Then we write (cf. \eqref{eq.D})
\begin{equation*}
\frac{\PP\big(y=xq^n\big)}{\PP\big(y=x\big)}=q^n\prod_{i=1}^{N-2}\frac{x q^{n}-y_i}{x-y_i}\le q^n, \qquad n=0,1,2,\dots,
\end{equation*}
This means that the probability of $y/x=q^n$ decays (as $n\to+\infty$) at least as fast as a
geometric progression with the common ratio $q$. This gives a bound for the tail probabilities that
does not depend on the length of the progression.

Assume now $y^*$ is negative. Then $y$ ranges over two infinite sequences, the positive sequence
$\{x q^n: n=0,1,\dots\}$ and the negative sequence $\{y^*q^n: n=0,1,\dots\}$.

For the positive sequence, the tails are bounded exactly as above. For the negative
sequence, the ratio $|y|/x$ equals the product $(|y^*|/x)q^n$, and this quantity can
be small due to either of the two factors, $|y^*|/x$ or $q^n$. Now we modify
\eqref{eq.E} as follows
\begin{equation*}
\frac{\PP\big(y=y^*q^n \big)}{\PP\big(y=x \big)}=\frac{|y^*|}{x}q^n
\prod_{i=1}^{N-2}\frac{|y_i|-|y^*|q^n}{|y_i|+x }\le \frac{|y^*|}{x}q^n
\prod_{i=1}^{N-2}\frac{|y_i|}{|y_i|+x}\le \frac{|y^*|}{x}q^n.
\end{equation*}
This implies that
$$
 \frac{\PP\bigl(-x q^n <y <0 \bigr)}{\PP\bigl(y=x\bigr)} < {\rm const} \cdot q^n,
$$
which gives the desired uniform bound.

The case $k\ge2$ is handled as in Lemma \ref{lemma1}: we use recursion on $k$ and slightly refine
the above arguments.
\end{proof}

\subsection{Proofs of Theorems \ref{thm1}, \ref{thm2}, \ref{thm3},  and \ref{thm4}. }

\begin{proof}[Proof of Theorems \ref{thm1} and \ref{thm2}]

\emph{Step} 1. Let  $\{X(N)\in\G_N: N=1,2,\dots\}$ be a regular sequence and
$x_{(1)}(N), \dots, x_{(N)}(N)$ denote the variational series for $X(N)$. We claim
that for every fixed $k$, the sequence $\{x_{(k)}(N)\}$ stabilizes as $N$ gets
large.

Indeed, let us fix $L>0$ which is so large that
$$
 \prod_{n=L}^\infty \left(1- \frac{e^{-c n}}{c}\right)>\frac{2}{3},
$$
where $c$ is the constant from Lemma \ref{lemma1} for this $k$. Let $Y^{(N)}\in\G_L$ be the random
configuration  with law $\La^N_L(X(N),\,\cdot\,)$ and $\xi_N$ be the probability measure on $\L$
which serves as the distribution of the $k$th term of the variational series of $Y^{(N)}$.

Due to  the choice of $L$, Lemma \ref{lemma1} implies that $\xi_N$ gives weight $>2/3$ to the point $x_{(k)}(N)\in\L$,  which in turn implies that the sum of the weights of all other points of $\L$ is $<1/3$.

On the other hand, because $\{x(N)\}$ is regular, the measures $\La^N_L(X(N),\,\cdot\,)$ weakly converge, as $N\to\infty$, to a probability measure on $\G_L$, which entails that a similar claim holds for the probability measures $\xi_N$: they weakly converge to a probability measure on $\L$.  But this may only happen if the sequence $\{x_{(k)}(N)\}$ stabilizes.

Evidently, the stable values of the terms  $x_{(k)}(N)$ form a configuration $X\in\G_\infty$.

\medskip

\emph{Step} 2.
 Conversely, let us suppose that a sequence $\{X(N)\in\G_N\}$ stabilizes to a configuration $X\in\G_\infty$. We claim that  $\{X(N)\in\G_N\}$ is regular.

Indeed, let us fix $k$ and prove that as $N\to\infty$ the
measures $\Lambda^N_k(X(N),\,\cdot\,)$ on $\G_k$ weakly converge to a probability measure $M_k$.

To do this we first show that this sequence of measures is tight and then we prove that all limiting points of this sequence are the same.

For that we fix any $\eps>0$
and choose $L>0$ such that
$$
 \prod_{n=L}^\infty \left(1-  \frac{e^{-c n}}{c}\right)>1-\eps,
$$
where $c$ is the constant from Lemma \ref{lemma1} for this $k$. Then this lemma implies that for
large $N$, with probability greater than $1-\eps$, for the variational series
$y_{(1)},y_{(2)},\dots$ of $\Lambda^N_L(X(N),\,\cdot\,)$--random point configuration it holds that
$$
y_{(1)}=x_{(1)}(N),\quad y_{(2)}=x_{(2)}(N),\quad \dots,\quad  y_{(k)}=x_{(k)}(N),
$$
where, as above, $(x_{(i)}(N))$ denotes the variational series for $X(N)$.
Now applying Lemma \ref{lemma2} and interlacing inequalities,
we see that with probability greater than $1-\eps$ the
$\Lambda^N_{k}(X(N),\,\cdot\,)$--random point configuration stays bounded away from
zero and from infinity. Since $\eps>0$ was arbitrary, this proves the tightness.

\smallskip

Now let $M_k$  be a probability measure on $\G_k$ which is a weak limit for a subsequence of $\{\Lambda^N_k(X(N),\cdot\,)$. We claim that $M_k$ is uniquely determined by $X$.

To see this we apply claim (i) of Proposition \ref{prop-links}. It says that for any partition $\nu\in\Y(k)$ one has
$$
\sum_{Y\in\G_k}\La^N_k(X(N),Y)\ti S_{\nu|k}(Y)=\ti S_{\nu|N}(X(N)).
$$
Let $N$ go to infinity along our subsequence. Then the right-hand side tends to $\ti S_{\nu}(X)$ because $X(N)$ stabilizes at $X$. As for the left-hand side, it tends to $\sum_{Y\in\G_k}M_k(Y)\ti S_{\nu|k}(Y)$: here we combine the weak convergence $\La^N_k(X(N),\,\cdot\,)\to M_k$ with the fact that the supports of all the measures from our sequence are uniformly bounded. Therefore, we arrive to the system of relations
$$
\sum_{Y\in\G_k}M_k(Y)\ti S_{\nu|k}(Y)=\ti S_{\nu}(X), \qquad \nu\in\Y(k),
$$
and the proof of  claim (ii) in Proposition \ref{prop-links} shows that this system determines $M_k$ uniquely.

The arguments of these two steps prove Theorems \ref{thm1} and \ref{thm2}.
\end{proof}

\begin{proof}[Proof of Theorem \ref{thm3}]
Let us introduce cylinder subsets in $\G_\infty$ of the form
$$
\G_\infty(x_1,\dots,x_k)=\{Z\in\G_\infty: z_{(1)}=x_1,\dots,z_{(k)}=x_k\},
$$
where $x_1,\dots,x_k$ is a given finite sequence of points of $\L$.  For $L\ge k$, we  denote by $\G_L(x_1,\dots,x_k)$ the similar cylinder subset in $\G_L$.

Next, let $X\in\G_\infty$ be fixed and $(x_{(1)}, x_{(2)}, \dots)$ be the corresponding variational
series. Theorem \ref{thm1}, that has been just proved, assigns to $X$ a coherent system
$M^{(X)}=\{M^{(X)}_L: L=1,2,\dots\}$. In these terms, the claim of Theorem  \ref{thm3} can be
rephrased in the following way:  For every fixed $k=1,2,\dots$, the mass given by $M^{(X)}_L$ to
the subset $\G_L(x_{(1)},\dots,x_{(k)})\subset\G_L$ tends to $1$ as $L\to\infty$.

Let us prove the last statement.  Given an arbitrary  $\eps>0$ we choose $L_0$ so large that
$$
\prod_{n=L_0}^\infty(1-\frac{e^{-c n}}{c})\ge 1-\eps,
$$
where $c$ is the constant from Lemma \ref{lemma1} that correspond to our fixed $k$. Next, let
$X(N)\in\G_N$ stand for the configuration formed by $x_{(1)},\dots,x_{(N)}$.  By virtue of Lemma
\ref{lemma1},  for any $N>L\ge L_0$, the mass assigned by the measure $\La^N_L(X(N),\,\cdot\,)$ to
the subset $\G_L(x_{(1)},\dots,x_{(k)}$ is $\ge1-\eps$.

On the other hand, Theorem \ref{thm1} says us that our sequence $\{X(N)\}$ is regular and represents the coherent system $M^{(X)}$, so that, as $N\to\infty$, the measures $\La^N_L(X(N),\,\cdot\,)$ converge to the measure $M^{(X)}_L$. It follows that $M^{(X)}_L(\G_L(x_{(1)},\dots,x_{(k)})\ge1-\eps$ for any $L\ge L_0$.

This completes the proof.
\end{proof}

\begin{proof}[Proof of Theorem \ref{thm4}]
We will prove the claims of the theorem in the reverse order.

\emph{Step} 1. The correspondence $X\mapsto M^{(X)}$ is an injective map $\G_\infty\to\varprojlim\M(\G_N)$. We claim that the image of $\G_\infty$ is a Borel subset and the map is a Borel isomorphism onto this image.

Indeed, observe that both spaces $\G_\infty$ and $\varprojlim\M(\G_N)$ are obviously standard Borel  spaces. Therefore, by virtue of an abstract theorem (see Mackey \cite[Theorem 3.2]{Mackey}), it suffices to check that the map in question is Borel. By the definition of the Borel structure in $\varprojlim\M(G_N)$ this amounts to verifying that the functions on $\G_\infty$ of the form
$$
f_{K,Y}(X):=M^{(X)}_K(Y), \qquad K=1,2,\dots,\quad Y\in\G_K,
$$
are Borel.

Now let $X(N)\in\G_N$ be composed from the first $N$ terms of the variational series of $X$. By Theorem \ref{thm1},
$$
M^{(X)}_K(Y)=\lim_{N\to\infty}\La^N_K(X(N),Y).
$$
It follows that $f_{K,Y}(X)$ is a pointwise limit of cylinder functions and hence is Borel.

\emph{Step} 2. Let $X\in\G_\infty$ be arbitrary and let $\sigma$ be the boundary
measure of $M^{(X)}$, which is a probability Borel measure on the minimal boundary
(Theorem \ref{thm-boundary}). We are going to prove that $\sigma$ is the
delta-measure at $X$, which will imply that  $M^{(X)}$ is extreme.

By Theorem \ref{Theorem_minimal_in_Martin} the minimal boundary is contained in the
Martin boundary, which can be identified with $\G_\infty$ (Theorem \ref{thm1}).
Next, by virtue of step 1, we may interpret $\sigma$ as an element of
$\M(\G_\infty)$. Therefore, we may write
$$
 M^{(X)}=\int_{Z\in\G_\infty} M^{(Z)} \sigma(dZ)
 $$
 and, consequently,
  $$
 M^{(X)}_L=\int M^{(Z)}_L \sigma(dZ), \qquad L=1,2,\dots\,.
 $$

 We are going to show that $\sigma$ must be the delta-measure at $X$, which will imply that $X$ is extreme. To this end it suffices to prove that for every $k=1,2,\dots$, the measure $\sigma$ is concentrated on $\G_\infty(x_{(1)},\dots,x_{(k)})$, where $(x_{(i)})$ is the variational series for $X$.

The argument based on Lemma \ref{lemma1} and used in the proof of Theorem \ref{thm3} shows that  for any $\eps>0$ there exists $L=L(k, \eps)$ such that for any $Z\in\G_\infty$ one has
 $$
 M^{(Z)}_L(\G_L(z_{(1)},\dots,z_{(k)}))\ge 1-\eps,
 $$
 where $(z_{(1)}, z_{(2)}, \dots)$ denotes the variational series for $Z$.

 Assume now that there exists a cylinder subset $\G_\infty(x_1,\dots,x_k)\subset\G_\infty$ which is distinct from $\G_\infty(x_{(1)},\dots,x_{(k)})$ and has a strictly positive $\sigma$-mass:
 $$
 \sigma(\G_\infty(x_1,\dots,x_k))=\de>0.
 $$
 Then for $L=L(k,\eps)$ we have
 $$
 \int_{Z\in\G_\infty} M^{(Z)}_L(\G_L(x_1,\dots,x_k))\sigma(dZ)\ge \int_{Z\in\G_\infty(x_1,\dots,x_k)} M^{(Z)}_L(\G_L(x_1,\dots,x_k))\sigma(dZ)\ge\de(1-\eps).
 $$

 On the other hand, the first integral is equal to
 $$
 M^{(X)}_L(\G_L(x_1,\dots,x_k))\le\eps,
 $$
 so that $\eps\ge\de(1-\eps)$.  Choosing $\eps$ sufficiently small we get a contradiction, which completes the proof.
 \end{proof}

\subsection{Approximation of boundary measures}

Recall that the abstract Theorem \ref{thm-boundary} assigns to an arbitrary coherent system
$M=\{M_N\}\in\varprojlim\M(\Om_N)$ a probability measure $\sigma$ on the minimal boundary $\Om$. It
is tempting to regard this boundary measure as a limit of the sequence $\{M_N\}$. And indeed, in a
number of concrete cases this informal statement can be turned into a formal one (see e.g.
Olshanski \cite[Theorem 10.2]{Ols-JFA}. Here we show how to do this in our situation.

In the theorem below $M=\{M_N\}\in\varprojlim\M(\G_N)$ is an arbitrary coherent
system and $\sigma$ is its boundary measure. By virtue of Theorem \ref{thm4} we may
regard $\sigma$ as a probability Borel measure on $\G_\infty$. Therefore, we may put
all the probability measures under considerations, that is, $M_N$'s and $\sigma$, on
a common space --- the space $\bar\G_\infty$ introduced in Subsection \ref{sect3.2}.
Recall that it is a locally compact topological space.

\begin{theorem}\label{thm-approximation}
Let $\{M_N\}$ and $\sigma$ be as above. Then $M_N\to \sigma$ in the weak topology of the space $\M(\bar\G_\infty)$.
\end{theorem}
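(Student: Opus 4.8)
The plan is to reduce the statement to the single--point concentration $M^{(X)}_N\to\delta_X$ for each fixed boundary point $X$, and then to integrate this against the boundary measure. Since $\bar\G_\infty$ is a subspace of the countable product $\bar\L\times\bar\L\times\cdots$ of metrizable compacta, it is metrizable; hence it suffices to prove that $\int f\,dM_N\to\int f\,d\sigma$ for every bounded continuous function $f$ on $\bar\G_\infty$, and this is exactly weak convergence in $\M(\bar\G_\infty)$. I fix such an $f$ once and for all.

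First I would record the integral representation. By Theorem \ref{thm4} we regard $\sigma$ as a probability measure on $\G_\infty$, and Theorem \ref{thm-boundary} gives $M_N(Y)=\int_{\G_\infty}M^{(X)}_N(Y)\,\sigma(dX)$ for $Y\in\G_N$. Multiplying by $f(Y)$, summing over the countable set $\G_N$, and interchanging sum and integral (legitimate since $\sum_{Y}|f(Y)|M^{(X)}_N(Y)\le\|f\|_\infty$ and $\sigma$ is a probability measure), I obtain
$$
\int f\,dM_N=\int_{\G_\infty}\Big(\int f\,dM^{(X)}_N\Big)\,\sigma(dX).
$$
The integrand is $\sigma$-measurable because each function $X\mapsto M^{(X)}_N(Y)$ is Borel, as was shown in the proof of Theorem \ref{thm4}.

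The core step is to prove that for each fixed $X\in\G_\infty$ one has $\int f\,dM^{(X)}_N\to f(X)$ as $N\to\infty$. Here I would invoke the description of the topology of $\bar\G_\infty$ from Proposition \ref{prop-topology}: given $\eta>0$, continuity of $f$ at $X$ provides $\eps>0$ with $|f(Z)-f(X)|<\eta$ for all $Z\in V_\eps(X)$. The key geometric observation is that the cylinder set $C_k=\{Y:y_{(1)}=x_{(1)},\dots,y_{(k)}=x_{(k)}\}$ is contained in $V_\eps(X)$ once $k$ is large enough. Indeed, the exponential tail bound \eqref{eq-bound} applied to $X$, and (using $y_{(1)}=x_{(1)}$) to $Y$, shows that for $k=k(\eps)$ large all terms $x_{(j)}$ and $y_{(j)}$ with $j>k$ lie inside $(-\eps,\eps)$; consequently $X$ and $Y$ carry exactly the same particles outside $(-\eps,\eps)$, which is the definition of $Y\in V_\eps(X)$. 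Theorem \ref{thm3} then yields $M^{(X)}_N(C_k)\to1$, so $M^{(X)}_N(V_\eps(X))\ge1-\eta'$ for $N$ large. Splitting the integral over $V_\eps(X)$ and its complement and using $|f|\le\|f\|_\infty$ on the complement gives $\big|\int f\,dM^{(X)}_N-f(X)\big|\le\eta+2\|f\|_\infty\,\eta'$, which proves the claim.

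Finally I would combine the two ingredients: the inner integrals $\int f\,dM^{(X)}_N$ converge pointwise in $X$ to $f(X)$ and are uniformly bounded by $\|f\|_\infty$, so dominated convergence with respect to the probability measure $\sigma$ gives $\int f\,dM_N\to\int_{\G_\infty}f(X)\,\sigma(dX)=\int f\,d\sigma$. I expect the main obstacle to be the geometric inclusion $C_k\subseteq V_\eps(X)$: one must check that matching the top $k$ entries of the variational series, together with the decay bound \eqref{eq-bound}, forces $Y$ and $X$ to coincide outside an $\eps$-neighborhood of the origin. It is precisely here that the ordering by absolute value and the opposite-sign structure of $\L$ enter, and it is this uniform tail control that guarantees no mass of $M^{(X)}_N$ escapes to infinity — the reason that full weak convergence against bounded continuous functions, and not merely vague convergence, holds.
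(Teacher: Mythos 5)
Your proof is correct, and it follows the paper's overall architecture — the integral representation $\int f\,dM_N=\int_{\G_\infty}\langle f, M^{(X)}_N\rangle\,\sigma(dX)$ from Theorems \ref{thm-boundary} and \ref{thm4}, dominated convergence, and reduction to the pointwise claim $\langle f, M^{(X)}_N\rangle\to f(X)$ (which is Corollary \ref{cor-approximation}) — but you handle that pointwise claim by a genuinely different mechanism. The paper first traps $X$ and the supports of all $M^{(X)}_N$ inside a common compact set $\{Z:|z_{(1)}|\le|x_{(1)}|\}$ (using that $M^{(X)}_N$ lives on configurations with $I(Y)\subseteq I(X)$, Theorem \ref{thm2}), then approximates $f$ uniformly on that compact by cylinder polynomials in $z_{(1)},z_{(2)},\dots$, for which the conclusion is immediate from the law of large numbers (Theorem \ref{thm3}). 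You instead avoid any approximation-theoretic input: you use only continuity of $f$ at $X$ together with the neighborhood basis $\{V_\eps(X)\}$ of Proposition \ref{prop-topology}, and the geometric inclusion $C_k\subseteq V_\eps(X)$ for $k$ large, which you correctly justify via the exponential tail bound \eqref{eq-bound} (applied to $Y$ through the matching $y_{(1)}=x_{(1)}$); Theorem \ref{thm3} then gives $M^{(X)}_N(C_k)\to 1$, i.e.\ direct concentration $M^{(X)}_N\to\delta_X$. Your route is slightly more elementary and self-contained — it needs neither the compact support containment nor Stone--Weierstrass-type density of cylinder polynomials, only the topology lemma — while the paper's route is shorter to state given that Proposition \ref{prop-topology} was proved mainly for Corollary \ref{corrfunctions}; in fact your argument is in the same spirit as the paper's proof of that corollary, extended from locally constant to arbitrary continuous $f$. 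The bookkeeping steps (Fubini--Tonelli for the countable sum against $\sigma$, Borel measurability of $X\mapsto M^{(X)}_N(Y)$ from the proof of Theorem \ref{thm4}) are all in order.
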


\begin{corollary} \label{cor-approximation}
In particular, for every $X\in\G_\infty$, the measures $M^{(X)}_N=\La^\infty_N(X,\,\cdot\,)$ weakly converge to the delta measure at $X$.
\end{corollary}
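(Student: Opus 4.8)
The plan is to combine the disintegration of $M_N$ over the boundary furnished by Theorem~\ref{thm-boundary} with a pointwise concentration statement, and then to read off the corollary as the degenerate case $\sigma=\delta_X$. By Theorem~\ref{thm-boundary} and the identification of the minimal boundary with $\G_\infty$ (Theorem~\ref{thm4}) we may write
$$
M_N=\int_{\G_\infty}M^{(Z)}_N\,\sigma(dZ),\qquad N=1,2,\dots.
$$
Fix a bounded continuous function $f$ on $\bar\G_\infty$. Since $f$ is bounded, each $M_N$ is a probability measure, and $Z\mapsto M^{(Z)}_N(y)$ is Borel for every $y\in\G_N$ (this was checked in Step~1 of the proof of Theorem~\ref{thm4}), Tonelli's theorem allows the interchange
$$
\int_{\bar\G_\infty}f\,dM_N=\int_{\G_\infty}\Big(\int_{\bar\G_\infty}f\,dM^{(Z)}_N\Big)\,\sigma(dZ).
$$
Everything therefore reduces to the pointwise claim that for each fixed $Z\in\G_\infty$ one has $\int f\,dM^{(Z)}_N\to f(Z)$, that is, $M^{(Z)}_N\to\delta_Z$ weakly. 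Granting this, the inner integral is dominated by $\|f\|_\infty$, so dominated convergence yields $\int f\,dM_N\to\int_{\G_\infty}f(Z)\,\sigma(dZ)=\int f\,d\sigma$; as $f\in C_b(\bar\G_\infty)$ was arbitrary (and the case $f\equiv1$ shows no mass escapes), this is exactly the asserted weak convergence.

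To prove the pointwise claim, fix $Z\in\G_\infty$ with variational series $(z_{(1)},z_{(2)},\dots)$ and fix $\eps>0$. First I would invoke the decay bound~\eqref{eq-bound}: it shows that for any configuration whose leading term has absolute value $|z_{(1)}|$, the $(k+1)$-th term of its variational series is at most $|z_{(1)}|\,q^{\lceil(k+1)/2\rceil-1}$, so one can choose $k=k(\eps)$ making this quantity $<\eps$. With this $k$, the cylinder set $\G_L(z_{(1)},\dots,z_{(k)})\subset\G_L$ is contained in the neighborhood $V_\eps(Z)$ of Proposition~\ref{prop-topology}: any $Y$ in the cylinder agrees with $Z$ on all particles of absolute value $\ge\eps$ (these are among $z_{(1)},\dots,z_{(k)}$), while the remaining particles of both $Y$ and $Z$ lie inside $(-\eps,\eps)$ by the choice of $k$. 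By Theorem~\ref{thm3} (in the reformulation established in its proof), $M^{(Z)}_L(\G_L(z_{(1)},\dots,z_{(k)}))\to1$ as $L\to\infty$, hence $M^{(Z)}_L(V_\eps(Z))\to1$. Since $\eps>0$ is arbitrary and the sets $V_\eps(Z)$ form a fundamental system of neighborhoods of $Z$ (Proposition~\ref{prop-topology}), the estimate
$$
\Big|\int f\,dM^{(Z)}_L-f(Z)\Big|\le\sup_{Y\in V_\eps(Z)}|f(Y)-f(Z)|+2\|f\|_\infty\,M^{(Z)}_L\big(V_\eps(Z)^c\big)
$$
gives $\int f\,dM^{(Z)}_L\to f(Z)$, as required. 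The corollary is then immediate: taking the coherent system to be $M^{(X)}$ itself, its boundary measure is $\sigma=\delta_X$ because $M^{(X)}$ is extreme (Theorem~\ref{thm4}), and the theorem specializes to $M^{(X)}_N\to\delta_X$.

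I expect the only genuine difficulty to be the geometric step asserting $\G_L(z_{(1)},\dots,z_{(k)})\subseteq V_\eps(Z)$. This is exactly where the \emph{uniform} exponential decay~\eqref{eq-bound} is indispensable: it is not enough that $Z$'s own tail particles be small, one must know that the unmatched tail of \emph{every} competing configuration in the cylinder also stays inside $(-\eps,\eps)$, and this is guaranteed only because the decay bound depends on a configuration solely through its leading term. The remaining ingredients — the disintegration, the Tonelli interchange, and the final dominated-convergence passage — are routine.
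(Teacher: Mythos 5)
Your proof is correct, and it diverges from the paper's argument at the decisive step. The paper proves Corollary \ref{cor-approximation} inside the proof of Theorem \ref{thm-approximation} as follows: it first observes that $X$ and the supports of all the measures $M^{(X)}_N$ lie in the common compact set $\{Z\in\bar\G_\infty: |z_{(1)}|\le|x_{(1)}|\}$, then approximates $f$ uniformly on that compact set by cylinder functions that are polynomials in finitely many coordinates $z_{(1)},z_{(2)},\dots$ of the variational series, and finally applies the law of large numbers (Theorem \ref{thm3}) coordinatewise to such polynomials. You instead bypass the compactness-plus-Stone--Weierstrass step entirely: you upgrade Theorem \ref{thm3} to the concentration statement $M^{(X)}_L(V_\eps(X))\to1$ via the geometric inclusion $\G_L(x_{(1)},\dots,x_{(k)})\subseteq V_\eps(X)$, which you correctly ground in the uniform decay bound \eqref{eq-bound} (the bound controls the tail of \emph{every} configuration in the cylinder through its leading term alone, exactly as you flag), and then conclude by a standard portmanteau-type estimate using Proposition \ref{prop-topology}, which supplies the $V_\eps(X)$ as a monotone fundamental system of neighborhoods. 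Your route is slightly more elementary and more local: it needs only continuity of $f$ at the single point $X$ plus boundedness, and it makes no use of the common compact support. The paper's route buys something you do not need here but the paper exploits elsewhere --- the uniform polynomial approximation meshes directly with the moment-type identities of Proposition \ref{prop-links}; conversely, your neighborhood-concentration viewpoint is essentially the mechanism the paper later invokes (via Proposition \ref{prop-topology}) when deducing Corollary \ref{corrfunctions}, so the two proofs are complementary rather than in conflict. Two cosmetic remarks: the interchange of sum and integral is a Fubini application for bounded $f$ (Tonelli as stated applies to $|f|$), and your closing appeal to Theorem \ref{thm4} to identify the boundary measure of $M^{(X)}$ with $\delta_X$ is legitimate but superfluous for the corollary itself, since your pointwise claim $M^{(X)}_N\to\delta_X$ \emph{is} the corollary, proved directly.
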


\begin{proof}[Proof of Theorem \ref{thm-approximation}]
Let $C(\bar\G_\infty)$ be the space of bounded continuous functions on
$\bar\G_\infty$. We have to prove that
$$
\lim_{N\to\infty}\langle f, M_N\rangle=\langle f, \sigma\rangle, \qquad f\in C(\bar\G_\infty),
$$
where the angular brackets denote the pairing between functions and measures.

We have
$$
\langle f, M_N\rangle=\int_{\G_\infty} \langle f, M^{(X)}_N \rangle \sigma(dX).
$$
By Lebesgue's dominated convergence theorem, it suffices to prove that
$$
\lim_{N\to\infty}\langle f, M^{(X)}_N \rangle=f(X), \qquad X\in\G_\infty, \quad f\in C(\bar\G_\infty).
$$
We will derive this claim (which is just the claim of Corollary
\ref{cor-approximation}) from the law of large numbers for the measures $M^{(X)}_N$
established in Theorem \ref{thm3}.

We use the notation of that theorem. Observe that there exists a compact set in $\bar\G_\infty$ containing $X$ and the supports of all measures $M^{(X)}_N$: for instance, one can take the set
$$
\{Z\in\bar\G_\infty: |z_{(1)}|\le |x_{(1)}|\}.
$$
On this set, the function $f(Z)$ can be approximated, in the uniform norm, by cylinder functions which are polynomials in finitely many variables $z_{(1)}, z_{(2)},\dots$\,. Therefore we may assume that $f$ itself is such a polynomial. But then the desired result follows from Theorem \ref{thm3}.
\end{proof}

\begin{definition} \label{Definition_cor_function}
Let $\sigma$ be an arbitrary Borel probability measure on $\bar\G_\infty$. The
\textit{$n$-particle correlation function} of $\sigma$, denoted by
$\rho^n_{\sigma}(x_1,\dots,x_n)$, assigns to an arbitrary $n$-tuple of pairwise
different points $x_1,\dots,x_n\in\L$ the probability of the event that the
$\sigma$-random configuration contains all these points.
\end{definition}

The next corollary will be used in Section \ref{sect5}.

\begin{corollary}\label{corrfunctions}
Let $\{M_N\}$ be a coherent system and $\sigma$ be its boundary measure.  For every fixed
$n=1,2,\dots$ and any pairwise different $x_1,\dots,x_n\in\L$,
$$
\lim_{N\to\infty}\rho^n_{M_N}(x_1,\dots,x_n)=\rho^n_{\sigma}(x_1,\dots,x_n).
$$
\end{corollary}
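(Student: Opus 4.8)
The plan is to reduce the statement to the weak convergence $M_N\to\sigma$ provided by Theorem \ref{thm-approximation}. For a fixed $n$-tuple of pairwise distinct points $x_1,\dots,x_n\in\L$, I would introduce the event
$$
A:=\{Z\in\bar\G_\infty:\ x_1,\dots,x_n\in Z\}.
$$
By Definition \ref{Definition_cor_function} we have $\rho^n_{M_N}(x_1,\dots,x_n)=M_N(A)$ and $\rho^n_\sigma(x_1,\dots,x_n)=\sigma(A)$, where all the measures $M_N$ and $\sigma$ are regarded as living on the common locally compact space $\bar\G_\infty$, exactly as in Theorem \ref{thm-approximation}. Thus it suffices to prove $M_N(A)\to\sigma(A)$. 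Since weak convergence a priori only controls integrals against bounded continuous functions, the heart of the matter is to show that $A$ is \emph{clopen} in $\bar\G_\infty$; its indicator $\one_A$ will then belong to $C(\bar\G_\infty)$ and the conclusion will be immediate.

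The key step, which I expect to be the main obstacle, is the clopenness of $A$, and this is precisely where Proposition \ref{prop-topology} enters. I would set $\eps_*:=\min_{1\le i\le n}|x_i|>0$ (all $x_i$ are nonzero, since they lie on $\L$) and fix any $\eps$ with $0<\eps<\eps_*$, so that each $x_i$ lies outside the interval $(-\eps,\eps)$. Given any $Z\in\bar\G_\infty$, every configuration $Z'$ in the neighborhood $V_\eps(Z)$ coincides with $Z$ outside $(-\eps,\eps)$; since the points $x_1,\dots,x_n$ all lie outside that interval, one has $x_i\in Z'\iff x_i\in Z$ for each $i$, whence $Z'\in A\iff Z\in A$. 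In other words, $\one_A$ is constant on $V_\eps(Z)$. By Proposition \ref{prop-topology} the sets $V_\eps(Z)$ are open neighborhoods, so both $A$ and its complement are open; equivalently, $\one_A$ is locally constant, hence continuous, and of course bounded.

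It then remains to assemble the pieces. Because $\one_A\in C(\bar\G_\infty)$, Theorem \ref{thm-approximation} (the weak convergence $M_N\to\sigma$, tested against bounded continuous functions) gives
$$
M_N(A)=\langle\one_A,M_N\rangle\longrightarrow\langle\one_A,\sigma\rangle=\sigma(A),\qquad N\to\infty,
$$
which is exactly the asserted convergence $\rho^n_{M_N}(x_1,\dots,x_n)\to\rho^n_\sigma(x_1,\dots,x_n)$. The only genuinely nontrivial ingredient is the topological observation that fixing the occupancy of finitely many nonzero lattice sites defines a locally constant, hence clopen, event on $\bar\G_\infty$; everything else is a direct application of the already established approximation theorem. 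I would note that the restriction to nonzero sites (automatic here, as $x_i\in\L$) is essential: it is what keeps the relevant sites away from the single accumulation point $0$ of $\bar\L$, so that membership is stable under the neighborhoods $V_\eps$.
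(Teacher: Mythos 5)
Your proof is correct and follows essentially the same route as the paper: the paper's proof also considers the indicator function of the event $\{Z:\ x_1,\dots,x_n\in Z\}$, observes via Proposition \ref{prop-topology} that it is locally constant and hence continuous, and then invokes the weak convergence of Theorem \ref{thm-approximation}. Your write-up merely fills in the (correct) detail of why local constancy holds, namely the choice $\eps<\min_i|x_i|$, which the paper leaves implicit.
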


\begin{proof}
Consider the function $f(Z)$ on $\bar\G_\infty$ which equals $1$ if $Z$ contains all the points $x_1,\dots,x_n$, and $0$ otherwise. We have
$$
\rho^n_{M_N}(x_1,\dots,x_n)=\langle f, M_N\rangle, \quad
\rho^n_{\sigma}(x_1,\dots,x_n)=\langle f, \sigma\rangle.
$$

On the other hand, $f$ is locally constant and hence continuous, as is immediately
seen from Proposition \ref{prop-topology}. Then the desired result follows from
Theorem \ref{thm-approximation}.
\end{proof}

\section{The $q$-zw-measures} \label{sect4}

\subsection{Definition of the measures}

We are going to introduce a family of probability measures on the levels $\G_N$ of
the extended Gelfand-Tsetlin graph $\G$. This family is the main object of study in
the present paper. The measures of the family depend on a quadruple  $(\al$, $\be$,
$\ga$, $\de)$ of parameters, which are complex or real numbers subject to some
conditions. We first give the formula for the measures and then discuss the
constraints on the parameters.

In what follows we use the conventional notation from $q$-analysis (see Gasper--Rahman \cite{GR})
$$
(a;q)_\infty:=\prod_{m=0}^\infty(1-aq^m), \qquad (a;q)_n:=\frac{(a;q)_\infty}{(aq^n;q)_\infty}=\prod_{m=0}^{n-1}(1-aq^m), \quad n=0,1,2,\dots\,.
$$
 For $N=1,2,\dots$ and $X=(x_1<\dots<x_N)\in \GTe_N$ we define
 \begin{equation} \label{eq_measure_def}
  \M_N^\alde(X)= \frac{1}{\Z_N(\alde)} \prod_{i=1}^N w^\alde_N(x_i) \prod_{1\le i<j\le N}
  (x_j-x_i)^2,
 \end{equation}
 where
 $$
 w^\alde_N(x)=|x|\frac{(\al x, \be x; q)_\infty}{(\ga q^{1-N}x, \de q^{1-N}x; q)_\infty}, \qquad x\in\L,
 $$
 and $\Z_N(\alde)$ is a normalization constant,
 $$
 \Z_N(\alde)=\sum_{X\in\G_N} \prod_{i=1}^N w^\alde_N(x_i) \prod_{1\le i<j\le N}
  (x_j-x_i)^2
 $$
 (a closed expression for $\Z_N(\alde)$ is given in Section \ref{Section_orth_polynomials} below).  Of course, for $N=1$ the product over $i<j$ is missing.

The conditions on the parameters $(\alde)$ must guarantee the correctness of the
definition. That is, $w^\alde_N(x)$ has to be a nonnegative real number for all $N$
and all $x\in\L$, and $\Z_N(\alde)$ has to be finite for all $N$.

There are two cases, the \textit{nondegenerate} and \textit{degenerate} ones.  In
the nodegenerate case the quantity $w^\alde_N(x)$ is always strictly positive, and
in the degenerate case it may vanish for some $x\in\L$.

We are mostly interested in the nondegenerate case, which is described with the help of the
following lemma.

\begin{lemma}
Assume $(a,b)$ is a pair of nonzero complex numbers such that one of the following two conditions
holds:

{\rm(i)}  either $a$ and $b$ are nonreal and complex-conjugate

{\rm(ii)} or $a$ and $b$ are real and such that $a^{-1}$ and $b^{-1}$ are contained in an open interval between two neighboring points of the lattice $\L$.

Then the quantity  $(ax,bx;q)_\infty$ is real and strictly positive for all $x\in\L$.
\end{lemma}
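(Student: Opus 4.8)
The plan is to expand
$$
(ax,bx;q)_\infty=\prod_{m=0}^\infty (1-axq^m)(1-bxq^m),
$$
and first observe that for every fixed $x\in\L$ this product converges absolutely to a \emph{nonzero} limit: since $|xq^m|=|x|q^m\to0$ geometrically, the factors approach $1$ fast enough for convergence, and no factor vanishes because neither $a^{-1}$ nor $b^{-1}$ lies on $\L$ (in case (ii) this is built into the hypothesis; in case (i) it holds because $axq^m$ is nonreal). Hence the only thing to decide is the \emph{sign} of the limit, and I would treat the two cases separately. In case (i), with $b=\bar a$, I would pair the two factors carrying the same index $m$: as $xq^m$ is real,
$$
(1-axq^m)(1-\bar a\,xq^m)=|1-axq^m|^2>0,
$$
strict positivity following from the fact that $a$ nonreal and $xq^m$ real nonzero force $axq^m\notin\R$, hence $axq^m\ne1$. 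The product is then a convergent product of strictly positive reals, so it is real and positive.

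Case (ii) is the substantive one. Here all factors are real, so the product is real with sign $(-1)^{N_a+N_b}$, where $N_a$ (resp.\ $N_b$) counts the indices $m\ge0$ with $1-axq^m<0$ (resp.\ $1-bxq^m<0$); both counts are finite since $axq^m\to0$. The goal becomes showing $N_a+N_b$ is even, and I would prove the sharper statement $N_a=N_b$, using crucially that $a^{-1}$ and $b^{-1}$ lie in the \emph{same} gap of $\L$. I would reduce to $x>0$, the case $x<0$ being symmetric under $\L^+\leftrightarrow\L^-$. Since an open interval between neighboring lattice points lies entirely on one side of $0$, the hypothesis forces $a$ and $b$ to share a common sign.

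For the count itself, if $a,b<0$ then $axq^m<0<1$ for all $m$, so $N_a=N_b=0$. If $a,b>0$, write $x=\z_+q^k$ and let $(\z_+q^{n+1},\z_+q^n)$ be the common gap containing $a^{-1}$ and $b^{-1}$. Then $1-axq^m<0$ iff $\z_+q^{k+m}=xq^m>a^{-1}$, and because $a^{-1}$ sits strictly between the consecutive lattice points $\z_+q^{n+1}$ and $\z_+q^n$ with no lattice point in between, this happens exactly when $k+m\le n$ (recalling that $q\in(0,1)$ reverses the inequality $q^{k+m}\ge q^n$). The identical computation with $b$ in place of $a$ yields the same condition, so $N_a=N_b=\max(0,n-k+1)$. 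Hence $N_a+N_b$ is even and the product is strictly positive.

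I expect the main obstacle to be pinning down the parity count correctly, and in particular recognizing that the hypothesis must be read as ``$a^{-1}$ and $b^{-1}$ lie in one and the same gap,'' not merely each in some gap. A quick check (take $x=\z_+$, $a^{-1}\in(\z_+q,\z_+)$, $b^{-1}\in(\z_+q^2,\z_+q)$) gives $N_a=1$, $N_b=2$ and a \emph{negative} product, so the ``same interval'' hypothesis is precisely what forces $N_a=N_b$ and cannot be weakened. Once this is understood, the remainder is routine bookkeeping with the directions of the inequalities governing which factors are negative.
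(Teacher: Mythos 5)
Your proof is correct and takes essentially the same route as the paper's two-sentence proof: in case (i) the paper observes that $(ax;q)_\infty$ and $(bx;q)_\infty$ are nonzero and complex-conjugate (your factorwise pairing into $|1-axq^m|^2$), and in case (ii) that they are real, nonzero, and of the same sign, which is exactly your statement $N_a=N_b$ forced by the same-gap hypothesis. Your explicit count $N_a=N_b=\max(0,\,n-k+1)$ and the counterexample showing that ``same gap'' cannot be weakened to ``each in some gap'' are precisely the details the paper leaves implicit.
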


\begin{proof}
Condition (i) guarantees that $(ax;q)_\infty$ and $(bx;q)_\infty$ are nonzero and complex-conjugate. Condition (ii) guarantees that these two quantities are real, nonzero, and of the same sign.
\end{proof}

One can easily prove the converse statement: if $a$ and $b$ are nonzero and such that $(ax, bx;q)_\infty$ is real and strictly positive for every $x\in\L$, then either (i) or (ii) holds true.

It is evident that any of the conditions (i), (ii) implies that $ab$ is real and strictly positive. One more evident observation: if $(a,b)$ satisfiles (i) or (ii), then the same holds true for $(a q^n,b q^n)$ with $n\in\ZZ$.

\begin{definition}\label{def.A}
We say that a quadruple $(\alde)$ of parameters is \textit{admissible} and \textit{nondegenerate} if both pairs $(\al,\be)$ and $(\ga,\de)$ satisfy the assumptions on the previous lemma and, moreover, $\ga\de q>\al\be$.
\end{definition}

\begin{proposition}
If $(\alde)$ is admissible and nondegenerate in the sense of the previous
definition, then the definition \eqref{eq_measure_def} of the probability measures
$M^\alde_N$ makes sense.
\end{proposition}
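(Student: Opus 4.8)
The statement that the definition ``makes sense'' means precisely two things: that $w^\alde_N(x)$ is a strictly positive real number for every $N$ and every $x\in\L$, and that the normalization $\Z_N(\alde)$ is finite. Once these hold, $\Z_N(\alde)>0$ automatically (it is a sum of positive terms over the nonempty set $\G_N$), so $\M^\alde_N$ is a bona fide probability measure. The positivity of $w^\alde_N$ is immediate from the preceding lemma. Indeed, $|x|>0$ since $0\notin\L$; the numerator $(\al x,\be x;q)_\infty$ is strictly positive because the pair $(\al,\be)$ satisfies condition (i) or (ii); and for the denominator I use the observation recorded after the lemma that shifting a pair by a power of $q$ preserves conditions (i)--(ii), so that $(\ga q^{1-N},\de q^{1-N})$ again satisfies one of them, whence $(\ga q^{1-N}x,\de q^{1-N}x;q)_\infty>0$ as well. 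All four infinite products converge since $q\in(0,1)$, so $w^\alde_N(x)>0$.

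The substantive part is the finiteness of $\Z_N(\alde)$, and this is where the hypothesis $\ga\de q>\al\be$ enters. The plan is to dominate the summand by a product of one-variable functions and thereby factorize the sum. Enumerating the particles of $X$ in the order $|x_{(1)}|\ge\dots\ge|x_{(N)}|$ of the variational series, one has $|x_{(i)}-x_{(j)}|\le 2|x_{(i)}|$ whenever $i<j$, hence
$$
\prod_{1\le i<j\le N}(x_j-x_i)^2\le \const\cdot\prod_{i=1}^{N}|x_{(i)}|^{2(N-i)}.
$$
Since passing to the variational series embeds $\G_N$ into $\L^N$ and every term is nonnegative, dropping the interlacing/ordering constraints only enlarges the sum, giving
$$
\Z_N(\alde)\le\const\cdot\prod_{i=1}^{N}\Bigl(\sum_{x\in\L}w^\alde_N(x)\,|x|^{2(N-i)}\Bigr).
$$
It therefore remains to show that each one-variable sum $\sum_{x\in\L}w^\alde_N(x)\,|x|^{2p}$ with $0\le p\le N-1$ converges.

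Each such sum is controlled by the two ends of the lattice. Near $0$ the infinite products in $w^\alde_N$ tend to $1$, so $w^\alde_N(x)\,|x|^{2p}\sim|x|^{2p+1}$, and summability follows from convergence of the geometric series $\sum_n q^{n(2p+1)}$. Near infinity I would use the functional equation $(a\,qx;q)_\infty=(ax;q)_\infty/(1-ax)$ to compute, for $x\in\L$,
$$
\frac{w^\alde_N(qx)}{w^\alde_N(x)}=q\,\frac{(1-\ga q^{1-N}x)(1-\de q^{1-N}x)}{(1-\al x)(1-\be x)}\xrightarrow[|x|\to\infty]{}\frac{\ga\de\,q^{3-2N}}{\al\be},
$$
so that the ratio of consecutive terms of $\sum_x w^\alde_N(x)\,|x|^{2p}$ taken toward infinity tends to $\tfrac{\al\be}{\ga\de}\,q^{2N-3-2p}$. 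This geometric ratio is \emph{largest}---and hence hardest to control---at $p=N-1$, where it equals $\al\be/(\ga\de q)$. The admissibility condition $\ga\de q>\al\be$ says exactly that this worst-case ratio is $<1$; note that $\al\be,\ga\de>0$ by the remark after the lemma, so the inequality is meaningful. For $p<N-1$ the ratio is even smaller, so all $N$ one-variable sums converge and $\Z_N(\alde)<\infty$. The main obstacle is precisely this last step: matching the growth $|x|^{2(N-i)}$ produced by the squared Vandermonde against the decay of the weight at infinity, and verifying that the single inequality $\ga\de q>\al\be$ simultaneously tames all the factors.
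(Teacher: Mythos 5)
Your proof is correct, and it follows the same overall skeleton as the paper's — positivity of the weight from the preceding lemma, then finiteness of $\Z_N(\alde)$ by reduction to one-variable moment sums $\sum_{x\in\L}w^\alde_N(x)|x|^{2p}$ with $p\le N-1$, with the admissibility inequality $\ga\de q>\al\be$ entering exactly once, at the top moment — but you execute both halves of the reduction differently. The paper states the moment bounds \eqref{eq_x18} for all exponents $i=0,\dots,2N-2$ and declares that $\Z_N(\alde)<\infty$ ``evidently follows'' (implicitly, by expanding the squared Vandermonde determinant into monomials of degree at most $2N-2$ in each variable); you instead make this step explicit via the pointwise bound $|x_{(i)}-x_{(j)}|\le 2|x_{(i)}|$ on the variational series and then enlarge the sum by dropping the ordering constraint, which is a clean, self-contained alternative. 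For the behavior at infinity, the paper derives the closed asymptotic formula \eqref{asymptotics} by rewriting the $q$-Pochhammer symbols, i.e.\ $w(\z_\pm q^{-n})\sim\const\,(ab/(cdq))^n$, whereas you run a ratio test on consecutive lattice points using the functional equation $(aqx;q)_\infty=(ax;q)_\infty/(1-ax)$. The two computations are equivalent in content, and your verification that the worst-case geometric ratio $\al\be/(\ga\de q)$ occurs at $p=N-1$ is exactly the point where the paper's typo ($x^{2-2N}$ should read $x^{2N-2}$) sits. What each buys: your ratio-test route is more elementary and avoids tracking constants; the paper's explicit asymptotics \eqref{asymptotics} give the precise exponential rate, which the paper reuses in Section \ref{Section_orth_polynomials} to determine how many moments the weight possesses and hence how many orthogonal polynomials exist — information your softer argument does not produce.
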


\begin{proof}
The conditions on the parameters guarantee that $w^\alde_N(x)>0$ for every $N$ and all $x\in\L$, so
the question to settle is whether the sum, which we denoted by $\Z_N(\alde)$,  is finite. We will
prove that
\begin{equation}\label{eq_x18}
\sum_{x\in\L}w_N^\alde(x)\, |x|^{i}<\infty, \qquad i=0,1,\dots,2N-2,\quad N=1,2,\dots,
\end{equation}
from which the desired conclusion  $\Z_N(\alde)<\infty$ evidently follows.

Note that $|x|^{-1} w_N^\alde(x)\to 1$ as $x\in\L$ approaches $0$, which implies the convergence of
the part of the sum \eqref{eq_x18} corresponding to all $x$ in a finite neighborhood of the origin.

 Let us examine the asymptotics of $w^\alde_N(x)$ as $x$ goes to plus or minus infinity along
$\L$. It is convenient to introduce the shorthand notation
$$
(a,b,c,d)=(\al,\be,\ga q^{1-N}, \de q^{1-N}), \quad w(x)=|x| \frac{(ax,bx;q)_\infty}{(cx,dx;q)_\infty}.
$$

Every point $x\in\L$ has the form $x=\z_\pm q^{-n}$, where $n\in\ZZ$. In this
notation, $x$ goes to infinity means that $n$ goes to $+\infty$. Then we have
\begin{equation}\label{asymptotics}
w(\z_\pm q^{-n})=|\z_\pm|\frac{(a\z_\pm, b\z_\pm;q)_\infty}{(c\z_\pm, d\z_\pm;q)_\infty}\, \frac{(a^{-1}\z_\pm^{-1}, b^{-1}\z_\pm^{-1})_n}{(c^{-1}\z_\pm^{-1}, d^{-1}\z_\pm^{-1};q)_n}\, \left(\frac{ab}{cdq}\right)^n\sim \const \left(\frac{ab}{cdq}\right)^n.
\end{equation}

It follows that
$$
w^\alde_N(\z_\pm q^{-n}) x^{2-2N}\sim \const \left(\frac{\al\be}{\ga\de q}\right)^n, \qquad n\to+\infty.
$$
This shows that needed sum is indeed finite.
\end{proof}

The degenerate case splits into several subcases.  Here is one of them.

\begin{example}\label{ex.A}
Assume $\al$ and $\be$ are real, $\al<0<\be$, and $\al^{-1}\in\L$,  $\be^{-1}\in\L$. As for $\ga$ and $\de$, assume that they are nonreal and complex-conjugate. Then $w_N^\alde(x)$ is strictly positive if $\al^{-1}q\le x\le \be^{-1}q$, and vanishes otherwise.  In this case, the measures are also correctly defined.
\end{example}

Unless otherwise stated, in what follows we will assume that $(\alde)$ is admissible and
nondegenerate in the sense of Definition \ref{def.A}. However, our results are automatically
extended to various variants of the degenerate cases. In particular, one may choose the parameters
as in Example \ref{ex.A} above.

\subsection{Pseudo big q-Jacobi  polynomials}

\label{Section_orth_polynomials}

The definition \eqref{eq_measure_def} of the measure $M^\alde_N$ on $\G_N$ fits into
the general scheme of \textit{orthogonal polynomial ensembles}, see, e.g., K\"onig's
survey paper \cite{Konig}. In our situation, the orthogonal polynomials in question
are those associated with the weight function $w_N^\alde(x)$ on $\L$. These
polynomials are not mentioned in the encyclopedic paper by Koekoek-Swarttouw
\cite{KS}. Fortunately, they appeared in recent Koornwinder's paper
\cite{Koo-Addendum}, which is an addendum to \cite{KS}).  Below we collect the
necessary information from \cite{Koo-Addendum}.

\begin{definition}
The \textit{pseudo big q-Jacobi polynomials} with parameters $(a,b,c,d)$ are defined by
$$
P_n(x)=P_n(x; a,b,c,d):=
{}_3\phi_2\left(\begin{matrix} q^{-n}, \; cda^{-1}b^{-1}q^{n+1}, \; cx\\
cb^{-1}q, \; ca^{-1}q\end{matrix}\bigg| q\right), \qquad n=0,1,2,\dots,
$$
where we use the conventional notation
$$
{}_3\phi_2\left(\begin{matrix} A, \; B, \; C\\
D, \; E\end{matrix}\bigg |Z\right):=\sum_{n=0}^\infty\frac{(A;q)_n(B;q)_n(C;q)_n}{(D;q)_n(E;q)_n(q;q)_n} Z^n.
$$
\end{definition}

The key property of the polynomials $P_n(a,b,c,d)$ is that they are orthogonal on $\L$ with weight
$$
w(x)=w(x; a,b,c,d)=|x|\frac{(ax;q)_\infty (bx;q)_\infty}{(cx;q)_\infty (dx;q)_\infty}, \quad x\in\L.
$$
Here we tacitly assume that the parameters are such that the weight is nonnegative.  We  are mainly interested in the nondegenerate case, when the weight is nowhere vanishing. Then, as is seen from \eqref{asymptotics},  it possesses only finitely many moments, so that the corresponding family of orthogonal polynomials is finite, too. The maximal degree of orthogonal polynomials is the largest integer $n$ such that $\frac{cdq}{ab}>q^{-2n}$.

It is worth noting that the  polynomials $P_n(x; a,b,c,d)$ do not depend on the extra parameters
$\z_-,\z_+$, which enter the definition of the lattice $\L$, the support of the measure. So, fixing
$(a,b,c,d)$ we still dispose of a two-parameter family of different orthogonality measures.  Of
course, the non-uniqueness of the orthogonality measure is readily explained, because  the number
of orthogonal polynomials is finite. However, the possibility to explicitly exhibit a whole family
of orthogonality measures is a remarkable fact. It also shows that the role of parameters
$(\z_-,\z_+)$ is less important than that of parameters  $(a,b,c,d)$.

The polynomials $P_n(x)$ are closely related to the classical  \textit{big $q$-Jacobi
polynomials}, cf. Andrews-Askey \cite{AndrewsAskey}, Koekoek-Swarttouw \cite{KS}, Ismail \cite{Ismail}, Koornwinder \cite{Koo}, defined via
$$
\mathcal P_n(x; A, B,C)=
{}_3\phi_2\left(\begin{matrix} q^{-n}, \; ABq^{n+1}, \; x\\
Aq, \; Cq\end{matrix}\bigg| q\right).
$$
Namely, let us assume that the two sets of parameters are related by
\begin{equation} \label{eq_parameters_relation}
A=\frac cb, \quad B=\frac da, \quad C=\frac ca.
\end{equation}
Then the comparison of the formulas above shows that
\begin{equation}
\label{eq_relation_Jac}
P_n(x; a,b,c,d)=\mathcal P_n(cx; A,B,C).
\end{equation}
Looking at \eqref{eq_relation_Jac} it might seem that our polynomials merely coincide with
classical $\mathcal P_n$ up to a rescaling of the variable. This is not quite true, as the
admissible domains of parameters ($a$,$b$,$c$,$d$ and $A$, $B$, $C$) for these two families of
polynomials are different and the supports of their orthogonality measures are also different. A
more precise point of view is that the polynomials $P_n$ and $\mathcal P_n$ are \emph{analytic
continuations} (in parameters) of each other. Nevertheless, relation \eqref{eq_relation_Jac} makes
it possible to extract a number of necessary formulas for the polynomials $P_n$ from well-known
formulas for the big $q$-Jacobi polynomials $\mathcal P_n$.

\smallskip

Note that the above expression for $P_n(x;a,b,c,d)$ is evidently symmetric under transposition
$a\leftrightarrow b$, and there is also a hidden symmetry under transposition $c\leftrightarrow d$:
namely, the \textit{monic} polynomials (which are obtained when we divide $P_n$ by its top degree
coefficient) are symmetric under $c\leftrightarrow d$.

Let  $h_n(a,b,c,d)$ denote the squared norm of $P_n(x;a,b,c,d)$:
$$
h_n(a,b,c,d)=\sum_{x\in\L} (P_n(x;a,b,c,d))^2 w(x;a,b,c,d).
$$
Here is an explicit expression for this quantity (see Koornwinder \cite[Section 14.5]{Koo-Addendum} and references therein):
\begin{equation}
\frac{h_n(a,b,c,d)}{h_0(a,b,c,d)}=(-1)^n\left(\frac{c^2}{ab}\right)^n q^{n(n-1)/2} q^{2n}\,
\frac{(q,qd/a,qd/b;q)_n}{(qcd/(ab),qc/a,qc/b;q)_n}\,
\frac{1-qcd/(ab)}{1-q^{2n+1}cd/(ab)}
\end{equation}
and
\begin{equation}
h_0(a,b,c,d)=\z_+\, \frac{(q,a/c,a/d,b/c,b/d;q)_\infty}{(ab/(qcd);q)_\infty}\,
\frac{\theta_q(\z_-/\z_+,cd\z_-\z_+)}{\theta_q(c\z_-,d\z_-,c\z_+,d\z_+)}\,,
\end{equation}
where the theta function $\theta_q(u)$ of a single argument $u$ is defined by
$$
\theta_q(u)=(u, q/x;q)_\infty
$$
and we use the shorthand notation
$$
(u_1,\dots,u_m;q)_n=(u_1;q)_n\dots(u_m;q)_n, \qquad \theta_q(u_1,\dots,u_m):=\theta_q(u_1)\cdots\theta_q(u_m).
$$

We also need the top degree coefficient of $P_n(x;a,b,c,d)$, which we denote by
$k_n(a,b,c,d)$. It is readily obtained from the definition of the polynomial:
\begin{equation}
\label{eq_leading_term} k_n(a,b,c,d)=c^n \frac{(cd q^{n+1}/(ab); q)_n}{(cq/b,
cq/a;q)_n}.
\end{equation}

\begin{proposition}
In this notation we have
$$
\Z_N(\alde)=\prod_{n=0}^{N-1}\frac{h_n(\al,\be,\ga q^{1-N}, \de q^{1-N})}{k^2_n(\al,\be,\ga q^{1-N}, \de q^{1-N})}.
$$
\end{proposition}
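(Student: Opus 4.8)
The plan is to recognize the sum defining $\Z_N(\alde)$ as the partition function of the $N$-point orthogonal polynomial ensemble attached to the weight $w_N^\alde$ on $\L$, and then to evaluate it through the standard Hankel-determinant/orthogonal-polynomial machinery. Throughout I abbreviate $(a,b,c,d)=(\al,\be,\ga q^{1-N},\de q^{1-N})$, so that $w_N^\alde(x)=w(x;a,b,c,d)$ and the associated orthogonal polynomials on $\L$ are exactly the pseudo big $q$-Jacobi polynomials $P_n(\,\cdot\,;a,b,c,d)$. The first thing I would record is that the moments $\mu_k:=\sum_{x\in\L}w_N^\alde(x)\,x^k$ are finite for $k=0,1,\dots,2N-2$; this is precisely the content of the bound \eqref{eq_x18} established in the proof that $\Z_N(\alde)<\infty$. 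I would also check, using the nondegeneracy assumption $\ga\de q>\al\be$, that the family $P_0,\dots,P_{N-1}$ is honestly available: the stated maximal-degree condition $cdq/(ab)>q^{-2n}$ at $n=N-1$ reduces, after substituting $(c,d)=(\ga q^{1-N},\de q^{1-N})$, to $\ga\de q>\al\be$, so all of $P_0,\dots,P_{N-1}$ exist and have strictly positive squared norms $h_0,\dots,h_{N-1}$.

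The core computation is the identity
\begin{equation*}
\Z_N(\alde)=\sum_{X\in\G_N}\prod_{i=1}^N w_N^\alde(x_i)\prod_{1\le i<j\le N}(x_j-x_i)^2=\det\big[\mu_{i+j}\big]_{i,j=0}^{N-1},
\end{equation*}
where $X=(x_1<\dots<x_N)$. To get it I would write the squared Vandermonde as $\det[x_j^{\,i-1}]_{i,j=1}^N\cdot\det[x_j^{\,k-1}]_{k,j=1}^N$ and apply the Andr\'eief (Heine) identity. Since the summand is symmetric in the $x_i$ and vanishes whenever two coordinates coincide, summation over the strictly increasing tuples comprising $\G_N$ equals $\tfrac1{N!}$ times summation over all of $\L^N$; the factor $N!$ produced by Andr\'eief's identity then cancels, leaving the Hankel determinant $D_N:=\det[\mu_{i+j}]_{i,j=0}^{N-1}$ with no leftover combinatorial constant. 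The only point to watch here is the absolute convergence of the multiple sum, which is guaranteed by the finiteness of the moments up to order $2N-2$ noted above.

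Next I would invoke the classical factorization of a Hankel moment determinant through the orthogonal polynomials: $D_N=\prod_{n=0}^{N-1}\widetilde h_n$, where $\widetilde h_n$ is the squared $L^2(w_N^\alde)$-norm of the \emph{monic} orthogonal polynomial of degree $n$. This is the usual Gram-determinant argument (perform row operations replacing the monomial rows of $\det[x^{i-1}]$ by the monic orthogonal polynomials, which leaves the determinant unchanged and diagonalizes the resulting Gram matrix against the orthogonal-polynomial columns). Finally, since the monic polynomial of degree $n$ equals $P_n/k_n$, its squared norm is $\widetilde h_n=h_n/k_n^2$, with $h_n=h_n(a,b,c,d)$ and $k_n=k_n(a,b,c,d)$ the squared norm and leading coefficient recorded in \eqref{eq_leading_term} and the displayed formula for $h_n$. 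Substituting $\widetilde h_n=h_n/k_n^2$ into $\Z_N(\alde)=\prod_{n=0}^{N-1}\widetilde h_n$ and restoring $(a,b,c,d)=(\al,\be,\ga q^{1-N},\de q^{1-N})$ yields exactly the claimed product formula. The main obstacle is not any single step but the bookkeeping needed to justify the purely formal manipulations---convergence of the multiple sum and the non-vanishing of $h_0,\dots,h_{N-1}$---both of which are secured by the finiteness estimate \eqref{eq_x18} and the nondegeneracy inequality $\ga\de q>\al\be$, respectively.
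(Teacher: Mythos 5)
Your proof is correct and follows essentially the same route as the paper, which simply cites the standard orthogonal polynomial ensemble computation (K\"onig, Section 2.7) that you write out in full: the Andr\'eief/Heine reduction to the Hankel moment determinant, its factorization into the squared norms of the monic polynomials, and the identification $\widetilde h_n=h_n/k_n^2$. Your added verifications---that \eqref{eq_x18} supplies the needed moments up to order $2N-2$ and that $\ga\de q>\al\be$ guarantees $P_0,\dots,P_{N-1}$ exist---are exactly the bookkeeping the paper leaves implicit.
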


\begin{proof}
This is a general fact, which holds for any orthogonal polynomial ensemble (see e.g. the computation in K\"onig \cite[Section 2.7]{Konig}).
\end{proof}

It is a remarkable fact that there exists a closed expression for the sum
$$
h_0(a,b,c,d) :=\sum_{x\in\L}w(x; a,b,c,d).
$$
This is a true $q$-analog of famous Dougall's formula \cite{Dougall} for the bilateral hypergeometric series ${}_2H_2$ at $1$. It seems that no closed expression exists for the sum of the weights $w(x; a, b, c, d)$ over the positive part of $\L$, which would correspond to a naive extension of Dougall's formula (see Askey \cite{Askey} and Bailey \cite{Bailey}).

\subsection{Coherency via orthogonal polynomials.}

The aim of this section is to prove the following theorem.

\begin{theorem} \label{theorem_coherency}
 Assume that the quadruple $(\alde)$ of parameters is admissible and nondegenerate in the sense of Definition\/ \ref{def.A}. Then the measures
 $\M_N^{\al,\be,\ga,\de}$ on $\GTe_N$, $N=1,2,\dots$ given by \eqref{eq_measure_def}
 form a coherent system on the graph $\G$.
\end{theorem}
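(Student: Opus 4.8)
The plan is to verify the coherency relation $\M_{N+1}^\alde\La^{N+1}_N=\M_N^\alde$ directly, one level at a time. Writing out both sides with \eqref{eq_measure_def} and the formula for $\La^{N+1}_N(X,Y)$, the factor $\prod_{1\le i<j\le N+1}(x_j-x_i)^2$ in $\M_{N+1}^\alde$ meets the denominator of the $q$-link and leaves a single copy of that Vandermonde; after cancelling one copy of $\prod_{1\le i<j\le N}(y_j-y_i)$ and absorbing the explicit factors $|Y|$ and $(q;q)_N$ into a constant, the whole identity collapses to one summation formula: for every $Y=(y_1<\dots<y_N)\in\G_N$,
\[
F(Y):=\sum_{X\in\G_{N+1}:\,X\succ Y}\ \prod_{i=1}^{N+1}w_{N+1}^\alde(x_i)\prod_{1\le i<j\le N+1}(x_j-x_i)=\const\cdot|Y|^{-1}\prod_{i=1}^{N}w_N^\alde(y_i)\prod_{1\le i<j\le N}(y_j-y_i),
\]
with a constant independent of $Y$. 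Convergence of this infinite sum follows from the weight decay \eqref{asymptotics} used to prove $\Z_{N+1}(\alde)<\infty$. An important simplification is that I need not compute the constant: the left-hand side $\M_{N+1}^\alde\La^{N+1}_N$ is a probability measure on $\G_N$ because $\La^{N+1}_N$ is stochastic (Proposition \ref{Prop_links_are_stochastic}), while $\M_N^\alde$ is a probability measure by construction, so two proportional probability measures must coincide, forcing the constant to equal $1$.

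To evaluate $F(Y)$ I will use the $\ti I$-description of interlacing: $X\succ Y$ means that each of the $N+1$ intervals $\ti I(y_{j-1},y_j)$, $j=1,\dots,N+1$ (with $y_0=-\infty$, $y_{N+1}=+\infty$) contains exactly one particle of $X$, which is then automatically the $j$-th smallest. Writing $\prod_{i<j}(x_j-x_i)=\det[x_j^{i-1}]_{i,j=1}^{N+1}$ and using multilinearity in the columns (column $j$ involves only $x_j$, which ranges over $\ti I(y_{j-1},y_j)$), the constrained sum factorizes into a determinant of \emph{incomplete moments},
\[
F(Y)=\det\left[\,m_i(y_{j-1},y_j)\,\right]_{i,j=1}^{N+1},\qquad m_i(a,b):=\sum_{x\in\ti I(a,b)}w_{N+1}^\alde(x)\,x^{i-1}.
\]

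The heart of the matter, and the step I expect to be the main obstacle, is a $q$-summation-by-parts identity for these incomplete moments. Performing a triangular (hence determinant-preserving up to an explicit constant) change of basis $x^{i-1}\rightsquigarrow\pi_{i-1}(x)$, I will choose polynomials $\pi_1,\dots,\pi_N$ of exact degrees $1,\dots,N$ lying in the image of the relevant $q$-difference operator, so that
\[
w_{N+1}^\alde(x)\,\pi_k(x)=g_k(x)-g_k(x'),\qquad g_k(x)=\frac{w_N^\alde(x)}{|x|}\,p_k(x),\quad \deg p_k=k-1,
\]
where $x'$ denotes the lattice neighbour of $x$ and $p_k$ is a polynomial. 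This is precisely the \emph{backward shift relation} for the pseudo big $q$-Jacobi polynomials: it rests on the Pearson equation $w_{N+1}^\alde(x)(1-\al x)(1-\be x)=q^{-1}w_{N+1}^\alde(qx)(1-\ga q^{-N}x)(1-\de q^{-N}x)$ together with the identification \eqref{eq_relation_Jac} with the classical big $q$-Jacobi family, for which the backward shift is tabulated. The delicate points are the bookkeeping of the telescoping across the two geometric rays of $\L$ and the gluing at $0$ (the half-open conventions in the definition of $\ti I$ are tailored exactly for this), and the vanishing of the boundary contributions at $\pm\infty$, which holds because $w_N^\alde$ decays fast by \eqref{asymptotics}. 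This identity is the full-weight analogue of the elementary telescoping in Lemma \ref{lemma_summation_geometric}.

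Granting the telescoping identity, the one remaining row, coming from $\pi_0\equiv1$, does \emph{not} telescope, since its incomplete moments are partial sums of $w_{N+1}^\alde$ whose total over $\L$ is nonzero; this is exactly what prevents the determinant from collapsing to $0$. Applying the column operation that replaces each column by the cumulative sum of the columns to its left turns the $N$ telescoping rows into $g_k(y_j)$ and the non-telescoping row into the cumulative weight of $w_{N+1}^\alde$ up to $y_j$; in the last column ($j=N+1$) the telescoping rows vanish by the $\pm\infty$ estimate, leaving only the total mass $\sum_{x\in\L}w_{N+1}^\alde(x)$ in the non-telescoping row. Expanding along that column reduces $F(Y)$ to $\const\cdot\det[g_k(y_j)]_{k,j=1}^{N}$. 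Factoring $w_N^\alde(y_j)/|y_j|$ out of each column and using that $\det[p_k(y_j)]$ is triangularly equivalent to $\det[y_j^{k-1}]=\prod_{i<j}(y_j-y_i)$, I obtain exactly $\const\cdot|Y|^{-1}\prod_i w_N^\alde(y_i)\prod_{i<j}(y_j-y_i)$. This is the required proportionality, and by the probability-measure argument above it upgrades to equality, completing the proof.
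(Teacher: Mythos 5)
Your proposal is correct and takes essentially the same route as the paper's proof: the reduction to a proportionality identity (with the constant fixed because both sides are probability measures), the determinantal rewriting of the sum over interlacing configurations, and the backward shift relation for the (pseudo) big $q$-Jacobi polynomials as the key summation input are all exactly the paper's steps --- your interval-moment determinant followed by cumulative column operations reproduces the paper's virtual-particle/Cauchy--Binet determinant $B(j,k)=\sum_{x\,\TI\, y_j}w^*(x)P^*_{k-1}(x)$, and your telescoping-to-boundary vanishing in the last column is the same fact the paper obtains from orthogonality (the paper itself notes this vanishing as the $y\to+\infty$ limit of the telescoped identity). The one point you flag as delicate --- the sign and neighbor bookkeeping of the telescoping across the two rays of $\L$ and the gluing at $0$ --- is precisely what the paper's Lemma \ref{prop_poly_relation} formalizes, so your sketch contains no genuine gap.
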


Before proceeding to the proof we need to perform some preparatory work. In the argument below we
apply a trick consisting in introducing some ``virtual particles''. We learnt it from Alexei
Borodin; see Borodin--Ferrari-Pr\"ahofer--Sasamoto \cite[Lemma 3.4]{BFPS}.

If $x,y\in\L$, we write $x\TI y$ if either $y$ is positive and $x<y$ or if $y$ negative and $x\le
y$. Next, introduce a kernel on $\L\times(\L\cup\{+\infty\})$ by setting
$$
 A(x,y)=
 \begin{cases}
  1,& x\TI y\\ 0,&\text{otherwise }
 \end{cases}
 $$
 with the understanding that $A(x,+\infty)=1$ for every $x\in\L$.

\begin{lemma} \label{Prop_interlacing_as_det}
Let $X=(x_1<\dots<x_{N+1})\in\G_{N+1}$, $Y=(y_1<\dots<y_N)\in\G_N$, and $y_{N+1}:=+\infty$.

 Then
$$
 \det [A(x_i,y_j)]_{i,j=1}^{N+1}=\begin{cases} 1,& X\succ Y,\\ 0, & \text{otherwise}
 \end{cases},
$$
\end{lemma}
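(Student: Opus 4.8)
The plan is to exploit the combinatorial structure of the $0$--$1$ matrix $[A(x_i,y_j)]$. First I would record two monotonicity properties of the relation $\TI$. Fixing $y$, the predicate $x\TI y$ is preserved when $x$ is decreased (if $x'<x\TI y$ then $x'\TI y$, checked separately for $y>0$ and $y<0$); hence for each column index $j$ the set $\{i:x_i\TI y_j\}$ is a down-set $\{1,\dots,k_j\}$, where $k_j:=\#\{i:x_i\TI y_j\}$, and the $j$th column of the matrix is $(\underbrace{1,\dots,1}_{k_j},0,\dots,0)^{\top}$. Fixing $x$, the predicate $x\TI y$ is preserved when $y$ is increased; since $y_1<\dots<y_N<y_{N+1}=+\infty$, this gives $k_1\le\dots\le k_N\le k_{N+1}$, and $k_{N+1}=N+1$ because $A(x_i,+\infty)=1$ for all $i$.

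Next I would reduce the determinant. Subtracting row $i+1$ from row $i$ for $i=1,\dots,N$ (an operation that does not change the determinant) replaces the matrix by $[\one_{k_j=i}]_{i,j=1}^{N+1}$, whose $j$th column carries a single $1$ in row $k_j$ (and is zero if $k_j=0$). Expanding the determinant, only the permutation $\pi$ with $k_{\pi(i)}=i$ for all $i$ can contribute, so the determinant equals $\sgn(k)$ when $j\mapsto k_j$ is a bijection of $\{1,\dots,N+1\}$ and vanishes otherwise. But a weakly increasing bijection of $\{1,\dots,N+1\}$ is necessarily the identity, and here $k$ is weakly increasing with $k_{N+1}=N+1$; hence the determinant is $1$ when $k_j=j$ for every $j$ and is $0$ in all other cases.

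It remains to identify the condition $k_j=j$ $(j=1,\dots,N)$ with interlacing, which is the only genuinely case-dependent step. Since $\{i:x_i\TI y_j\}$ is the down-set $\{1,\dots,k_j\}$, the equality $k_j=j$ is equivalent to $x_j\TI y_j$ together with $x_{j+1}\not\TI y_j$. I would then unwind these two relations according to the sign of $y_j$: for $y_j>0$ they read $x_j<y_j\le x_{j+1}$, and for $y_j<0$ they read $x_j\le y_j<x_{j+1}$. Comparing with the three cases in the definition of the interval $I(x_j,x_{j+1})$ --- and noting that the sign of $y_j$ forces $x_j<0$ when $y_j<0$ and $x_{j+1}>0$ when $y_j>0$ --- one checks in each case that $x_j\TI y_j$ and $x_{j+1}\not\TI y_j$ hold precisely when $y_j\in I(x_j,x_{j+1})$. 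Thus $k_j=j$ for all $j$ is exactly the interlacing condition $X\succ Y$, and the two displayed cases of the lemma follow. The main (purely bookkeeping) obstacle is matching the strict/non-strict boundary conventions of $\TI$ with those of $I(\cdot,\cdot)$ across the sign cases; no estimates or deeper inputs are needed.
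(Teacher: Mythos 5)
Your proof is correct, and it executes the argument differently from the paper even though both exploit the same monotone $0$--$1$ structure of the kernel $A$. The paper works row-wise: the points $y_0=-\infty,y_1,\dots,y_N,y_{N+1}=+\infty$ split $\L$ into the intervals $\ti I(y_{i-1},y_i)$, and since $A(x,y_j)$ depends only on which of these intervals $x$ lies in, two $x$'s in one interval produce equal rows and kill the determinant; one $x$ per interval is exactly $X\succ Y$ by the alternative description of interlacing given in Section \ref{sect2.1}, and in that case the matrix is triangular with $1$'s on the diagonal. You work column-wise instead: the counters $k_j=\#\{i:x_i\TI y_j\}$, the row-differencing that reduces the matrix to $[\one_{k_j=i}]$, and the observation that a weakly increasing bijection of $\{1,\dots,N+1\}$ must be the identity replace the paper's equal-rows and triangularity steps, making the dichotomy $\det\in\{0,1\}$ completely mechanical. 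The trade-off is in where the case analysis lands: the paper offloads it to the already-established $\ti I$-reformulation of interlacing (an unspelled ``case-by-case check''), while you re-derive the criterion $k_j=j\Leftrightarrow y_j\in I(x_j,x_{j+1})$ directly from the original definition; your boundary-convention bookkeeping there is right (for $y_j>0$ the condition $x_j<y_j\le x_{j+1}$, for $y_j<0$ the condition $x_j\le y_j<x_{j+1}$, with the sign constraints you note handling the mixed-sign interval $[x_j,x_{j+1}]$). One point worth a sentence in a write-up: performing the subtractions ``row $i$ minus row $i+1$'' in increasing order of $i$ uses each row $i+1$ before it is modified, so the transformed matrix really is $[\one_{k_j=i}]_{i,j=1}^{N+1}$, the last row being $\one_{k_j=N+1}$ untouched.
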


\begin{proof}
 As explained in Subsection \ref{sect2.1}, the points $y_0:=-\infty$, $y_1$, \dots, $y_N$, $y_{N+1}=+\infty$ divide $\L$ into a disjoint union
  into $N+1$ intervals $\ti I(y_{i-1}, y_i)$, $i=1,\dots,N+1$,
 with the agreement that the $i$th interval includes its right endpoint $y_i$ if and only if $y_i<0$, and includes its left endpoint $y_{i-1}$ if and only if $0<y_{i-1}$.

If two of the points $x_1,\dots,x_{N+1}$ belong to the same
  interval, then the corresponding two rows of the matrix $A(x_i,y_j)$ are the same and the
  determinant vanishes. Thus, for non-zero determinant, all the points $x_1,\dots,x_{N+1}$ need to belong to different intervals, so that $x_1\in I_1,\dots, x_{N+1}\in I_{N+1}$. A case-by-case check shows that this just means $X\succ Y$.

 Further, in this case, the matrix $A(x,y)$ is triangular
  with $1$s on the diagonal and, thus, its determinant is one.
\end{proof}

\begin{lemma}\label{prop_poly_relation}
Let us set
\begin{gather*}
w(x)=w(x;a,b,c,d), \quad P_n(x)=P_n(x;a,b,c,d), \\
 w^*(x)=w(x; a,b, cq^{-1}, dq^{-1}), \quad P^*_{n+1}(x)=P_{n+1}(x; a,b, cq^{-1}, dq^{-1}),
 \end{gather*}
where $n\ge 0$ is such that $\frac{cdq}{ab}>q^{-2n}$.  Then for every $y\in\L$ one
has
\begin{equation}\label{eq_x10}
\sum_{x\in\L:\, x\TI y} w^*(x) P^*_{n+1}(x)=\frac{cq}{(b-c)(a-c)}\cdot
  \frac{w(y)P_n(y)}{|y|}.
\end{equation}
\end{lemma}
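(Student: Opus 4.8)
The plan is to recognize the asserted identity as the telescoped (summed) form of a \emph{first-order} $q$-difference relation — the backward shift relation for the pseudo big $q$-Jacobi polynomials. Concretely, I would set
$$g(x):=\frac{w(x)P_n(x)}{x},\qquad C:=\frac{cq}{(b-c)(a-c)},$$
and reduce everything to the single claim
\begin{equation}\label{eq_firstorder_plan}
w^*(x)\,P^*_{n+1}(x)=C\,[\,g(x/q)-g(x)\,],\qquad x\in\L .
\end{equation}
Granting \eqref{eq_firstorder_plan}, the statement follows by summing over $x\TI y$ and telescoping, so the bulk of the work is to prove \eqref{eq_firstorder_plan}.

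To establish \eqref{eq_firstorder_plan} I would first reduce both weights to $w$. From $w^*(x)=w(x;a,b,cq^{-1},dq^{-1})$ and the elementary identity $(cq^{-1}x;q)_\infty=(1-cq^{-1}x)(cx;q)_\infty$ one gets the clean ratios
$$\frac{w^*(x)}{w(x)}=\frac{1}{(1-cq^{-1}x)(1-dq^{-1}x)},\qquad \frac{w(x)}{w(x/q)}=q\,\frac{(1-cq^{-1}x)(1-dq^{-1}x)}{(1-aq^{-1}x)(1-bq^{-1}x)}.$$
Substituting these into \eqref{eq_firstorder_plan} turns it into a relation between $P^*_{n+1}(x)$, $P_n(x)$ and $P_n(x/q)$ with explicit rational coefficients; after clearing denominators it becomes a polynomial identity in $x$. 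This is exactly the backward shift relation for the (pseudo) big $q$-Jacobi polynomials: via \eqref{eq_relation_Jac}, $P_m(\,\cdot\,;a,b,c,d)=\mathcal P_m(c\,\cdot\,;A,B,C)$ with $(A,B,C)$ as in \eqref{eq_parameters_relation}, the parameter shift $(c,d)\mapsto(cq^{-1},dq^{-1})$ corresponds to $(A,B,C)\mapsto(Aq^{-1},Bq^{-1},Cq^{-1})$ together with a rescaling of the argument, and the classical backward shift relation for $\mathcal P_m$ (Koornwinder \cite{Koo-Addendum}, Koekoek--Swarttouw \cite{KS}) yields \eqref{eq_firstorder_plan} after bookkeeping of the prefactors, the constant collapsing to $C=\tfrac{cq}{(b-c)(a-c)}$. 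The hypothesis $cdq/ab>q^{-2n}$ guarantees that $P_n$ and $P^*_{n+1}$ lie in the respective finite families, so these manipulations are legitimate.

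With \eqref{eq_firstorder_plan} in hand, the remaining step is the telescoping, which I would carry out by cases. On each of the two geometric progressions making up $\L$ the map $x\mapsto x/q$ is the step ``away from $0$'', so a sum of $g(x/q)-g(x)$ collapses to its endpoint values. The decisive point is the behaviour of $g$ at the two kinds of boundary. Near the origin $w(x)/x\to\operatorname{sgn}(x)$ and $P_n(x)\to P_n(0)$, so $g$ has the \emph{opposite-signed} one-sided limits $g(0^+)=P_n(0)$ and $g(0^-)=-P_n(0)$; at infinity the decay of the weight (here the standing assumption $cdq/ab>q^{-2n}$ enters, forcing $w(x)P_n(x)/x\to0$ as $x\to\pm\infty$) gives $g(\pm\infty)=0$. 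For $y<0$ the set $\{x\TI y\}$ is the progression $\{y,y/q,y/q^2,\dots\}$ running off to $-\infty$, and telescoping leaves $C\bigl(g(-\infty)-g(y)\bigr)=-Cg(y)=C\,w(y)P_n(y)/|y|$. For $y>0$ the set $\{x\TI y\}$ is all of $\L^-$ together with $\{qy,q^2y,\dots\}$; telescoping the two pieces separately produces $C\bigl(g(-\infty)-g(0^-)\bigr)$ from the negative part and $C\bigl(g(y)-g(0^+)\bigr)$ from the positive part, and the two origin terms $-g(0^-)$ and $-g(0^+)$ cancel precisely because $g(0^-)=-g(0^+)$, leaving $C\,g(y)=C\,w(y)P_n(y)/|y|$. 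In both cases this is the right-hand side of \eqref{eq_x10}.

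The main obstacle is the first-order relation \eqref{eq_firstorder_plan}: locating the correct form of the big $q$-Jacobi backward shift, matching the parameter and argument shifts induced by $(c,d)\mapsto(cq^{-1},dq^{-1})$, and verifying that all prefactors combine into the single constant $\tfrac{cq}{(b-c)(a-c)}$. The telescoping itself is routine once one notices the sign flip of $g$ across $0$, which is exactly what makes the spurious boundary contributions cancel; the only extra care needed is the justification that the series converges and that the boundary terms at $\pm\infty$ vanish, both of which rest on the hypothesis $cdq/ab>q^{-2n}$ (reinforced by the fact that $w^*$ decays one order faster than $w$ at infinity).
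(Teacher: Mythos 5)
Your proof is correct and is essentially the paper's own argument: your first-order relation $w^*(x)P^*_{n+1}(x)=C\,[g(x/q)-g(x)]$ becomes, after the substitution $x=yq$, precisely the ``difference of \eqref{eq_x10} at $y$ and $yq$'' that the paper verifies from the backward shift relation \cite[(3.5.8)]{KS} for the big $q$-Jacobi polynomials via the correspondence \eqref{eq_relation_Jac}, \eqref{eq_parameters_relation}, with the same constant $\tfrac{cq}{(b-c)(a-c)}$. The only (equivalent) difference is in how the summation is anchored: you telescope explicitly, cancelling the origin contributions through the sign flip $g(0^\pm)=\pm P_n(0)$, whereas the paper matches both sides in the limits $y\to\pm\infty$ (using the orthogonality $\sum_{x\in\L}w^*(x)P^*_{n+1}(x)=0$ for the $y\to+\infty$ end) and concludes from the equality of first differences.
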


\begin{proof}
 Observe that as $y\to-\infty$ the left--hand side of \eqref{eq_x10} tends to $0$.
 The right--hand side also tends to zero due to the condition on $n$.
  Similarly, in the limit $y\to+\infty$, the left--hand side of
 \eqref{eq_x10} becomes the scalar product of $P^*_{n+1}$ with constant function and
 hence vanishes and so is the right--hand side.

Therefore, it suffices to check that the difference of \eqref{eq_x10} at $y\in\L$
and at $yq$ is a valid identity.

If $y$ is negative, then $x\TI y$ means $x\le y$ and the difference on the left is
$$
-w^*(yq)P^*_{n+1}(yq).
$$
If $y$ is positive, then $x\TI y$ means $x<y$ and the difference on the left is
$$
w^*(yq)P^*_{n+1}(yq).
$$
Therefore, in both cases the desired identity can be written as
$$
y\, w^*(yq)P^*_{n+1}(yq)=\frac{cq}{(b-c)(a-c)}
\bigl(w(y)P_n(y)-q^{-1}w(yq)P_n(yq)\bigr)
$$
or equivalently
$$
y
P^*_{n+1}(yq)=\frac{cq}{(b-c)(a-c)}\left(\frac{w(y)}{w^*(yq)}P_n(y)-q^{-1}\frac{w(yq)}{w^*(yq)}P_n(yq)\right).
$$

We claim that the latter relation follows from the backward shift relation for the
big $q$-Jacobi polynomials (Koekoek and Swarttouw \cite[(3.5.8)]{KS})
\begin{multline*}
(1-A)(1-C)u \mathcal P_{n+1}(u; Aq^{-1}, Bq^{-1}, Cq^{-1})\\
=(u-A)(u-C)\mathcal P_n(u;  A,B,C) -A(u-1)(Bu-C)\mathcal P_n(uq; A,B,C).
\end{multline*}

Indeed, let us set $u=cy$ and compare the two relations using the connection between the two families of polynomials.

We have
$$
P^*_{n+1}(yq)=P_{n+1}(yq, a,b, cq^{-1}, dq^{-1})=\mathcal P_{n+1}(cy; Aq^{-1}, Bq^{-1}, Cq^{-1}).
$$
This shows that  the left hand sides of our relations differ by a scalar factor.

Let us examine now the right-hand sides. We have
$$
P_n(y)=\mathcal P_n(u; A,B,C), \quad P_n(yq)=\mathcal P_n(uq; A,B,C)
$$
and
$$
w(y)=|y|\frac{(ay, by;q)_\infty}{(cy,dy;q)_\infty}, \quad w(yq)=q|y|\frac{(ayq, byq;q)_\infty}{(cyq,dyq;q)_\infty}, \quad w^*(yq)=q|y|\frac{(ayq, byq;q)_\infty}{(cy,dy;q)_\infty}.
$$
It follows that
$$
\frac{w(y)}{w^*(yq)}=q^{-1}(1-ay)(1-by), \quad q^{-1}\frac{w(yq)}{w^*(yq)}=q^{-1}(1-cy)(1-dy).
$$

On the other hand,
$$
(u-A)(u-C)=\frac{c^2}{ab}(1-ay)(1-by), \qquad A(u-1)(Bu-C)=\frac{c^2}{ab}(1-cy)(1-dy),
$$
which shows that the right-hand sides of our relations differ by the same constant factor as above.

This completes the proof.
\end{proof}

\begin{proof}[Proof of Theorem \ref{theorem_coherency}]
We prove that
$$
\M^{\al,\be,\ga,\de}_{N+1}\La^{N+1}_N=\M^{\al,\be,\ga,\de}_N, \qquad N=1,2,\dots\,.
$$
Since $\M^{\al,\be,\ga,\de}_{N+1}$ and $\M^{\al,\be,\ga,\de}_N$ are probability
measures, it suffices to prove a formally weaker claim that the both sides differ by
a nonzero constant factor. This will substantially simplify the computations,
because we can ignore  the cumbersome expression for the normalization factor
entering the definition of our measures. On the other hand, if one reconstructs all
the normalization factors in the present proof, then this gives an independent check
of the explicit formula for $\Z_N(\alde)$.

Throughout the proof  we denote by ``$\const$'' a (possibly varying) nonzero constant factor whose exact value is not relevant for us.
We keep to the notation of Lemma \ref{prop_poly_relation}, where we set
$$
(a,b,c,d)=(\al,\be,\ga q^{1-N}, \de q^{1-N}).
$$
Taking into account the definition of the measures and the links, the desired relation can be written as
$$
\sum_{X:\, X\succ Y}\det[w^*(x_i)x_i^{k-1}]_{i,k=1}^{N+1}=\const \det[|y_j|^{-1} w(y_j)y_j^{k-1}]_{j,k=1}^N, \qquad \forall Y\in\G_N
$$
or equivalently as
$$
\sum_{X:\, X\succ Y}\det[w^*(x_i)P^*_{k-1}(x_i)]_{i,k=1}^{N+1}=\const \det[|y_j|^{-1} w(y_j)P_{k-1}(y_j)]_{j,k=1}^N, \qquad \forall Y\in\G_N.
$$

Next, applying Lemma \ref{Prop_interlacing_as_det} we may write the left-hand side as
$$
\sum_{x_1<\dots<x_{N+1}}\det[w^*(x_i)P^*_{k-1}(x_i)]_{i,k=1}^{N+1}\det[A(x_i, y_j)]_{i,j=1}^{N+1}.
$$
Applying the Cauchy-Binet formula and taking into account the definition of the kernel $A(x,y)$ we write the above sum as a single determinant
$$
\det[B(j,k)]_{j,k=1}^{N+1}, \qquad B(j,k):=\sum_{x: \,x\TI y_j}w^*(x)P^*_{k-1}(x).
$$
Now we use the fact that $y_{N+1}=+\infty$. It follows that
$$
B(N+1,k)=\sum_{x\in\L}w^*(x)P^*_{k-1}(x).
$$
Because of the orthogonality of the polynomials $P^*_n$, this expression vanishes unless $k=1$, when it is a positive constant. Therefore, our determinant of order $N+1$ is reduced, up to a scalar factor, to the minor of order $N$ which is obtained by removing the $(N+1)$th row and the first column. This minor has the form
$$
\det\left[\sum_{x:\, x\TI y_j}w^*(x) P^*_k(x)\right]_{j,k=1}^N.
$$

Finally we apply Lemma \ref{prop_poly_relation}. It says that this determinant is equal to
$$
\const\det\left[|y_j|^{-1}w(y_j) P_{k-1}(y_j)\right]_{j,k=1}^N,
$$
which is precisely what we want.
\end{proof}

\begin{corollary}
 Assume that the quadruple $(\alde)$ of parameters is admissible and nondegenerate in the sense of Definition\/ \ref{def.A}. Then, by virtue of the abstract Theorem \ref{thm-boundary}, the coherent system
 $\{\M_N^\alde: N=1,2,\dots\}$ afforded by Theorem \ref{theorem_coherency} gives rise to a probability measure on the space $\G_\infty$.
\end{corollary}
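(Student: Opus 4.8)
The plan is to deduce the statement directly by combining Theorem \ref{theorem_coherency} with the two abstract boundary results established earlier, namely Theorem \ref{thm-boundary} and Theorem \ref{thm4}; no fresh computation is required.

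First I would note that Theorem \ref{theorem_coherency} asserts exactly that the sequence $\{\M_N^\alde\}$ satisfies the coherency relation $\M_{N+1}^\alde \La^{N+1}_N = \M_N^\alde$ for every $N$. By the definition of the projective limit, this means precisely that $\{\M_N^\alde\}$ is an element of $\varprojlim \M(\G_N)$; the admissibility and nondegeneracy hypotheses on $(\alde)$ ensure, via the earlier proposition establishing $\Z_N(\alde)<\infty$, that each $\M_N^\alde$ is a genuine probability measure, so the whole family is a bona fide coherent system.

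Next I would invoke the abstract Theorem \ref{thm-boundary}, which furnishes a bijection between the set $\varprojlim \M(\G_N)$ of coherent systems and the set $\M(\Om)$ of probability Borel measures on the minimal boundary $\Om$. Applied to our coherent system, it produces a unique $\sigma \in \M(\Om)$ with $\M_N^\alde(Y) = \int_\Om M^{(\om)}_N(Y)\,\sigma(d\om)$ for all $N$ and all $Y \in \G_N$. Finally, Theorem \ref{thm4} identifies the minimal boundary $\Om$ with the space $\G_\infty$ of infinite bounded configurations and matches their Borel structures; transporting $\sigma$ along this identification yields the sought probability measure on $\G_\infty$.

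I expect no genuine obstacle here: the corollary is a formal packaging of results already in hand. The analytic difficulty was discharged in Theorem \ref{theorem_coherency}, where coherency amounts to a nontrivial hypergeometric identity reduced to the backward shift relation for big $q$-Jacobi polynomials, and the structural difficulty in Theorems \ref{thm1}--\ref{thm4}, where the identification of the boundary with $\G_\infty$ rests on the uniform tail estimates of Lemmas \ref{lemma1} and \ref{lemma2}. Once both are available, the conclusion follows by composition of the cited maps.
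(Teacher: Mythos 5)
Your proposal is correct and matches the paper's (implicit) reasoning exactly: the corollary is indeed just the composition of Theorem \ref{theorem_coherency} (coherency of $\{\M_N^\alde\}$) with the bijection of Theorem \ref{thm-boundary} and the identification of the minimal boundary with $\G_\infty$, including Borel structures, from Theorem \ref{thm4}. Nothing further is needed.
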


\begin{definition}\label{def-boundary-measures}
The probability measure from the above corollary will be denoted by $\M^\alde_\infty$ and called a \textit{boundary $q$-zw-measure}.
\end{definition}

\section{Correlation functions of the boundary $q$-zw-measures}\label{sect5}

Our definition of the boundary $q$-zw-measures  $\M^\alde_\infty$ (Definition
\ref{def-boundary-measures} above) is non-constructive in the sense that it relies on an abstract
existence theorem. So the question arises whether it is possible to obtain a more concrete
description of these measures. There is no hope that they can be given by a density with respect to
some evident reference measure like Lebesgue measure on the line. This is a standard situation for
measures on infinite-dimensional spaces. In the case of the ordinary Gelfand--Tsetlin graph, the
boundary zw-measures were described in Borodin--Olshanski \cite{BO-AnnMath} via their correlation
functions. We follow the same approach and find in this section the correlation functions of
$\M^\alde_\infty$.

We recall that the definition of correlation functions adapted to our concrete situation was given in
Definition \ref{Definition_cor_function}.

The correlation functions of $\M_\infty^{\alpha,\beta,\gamma,\delta}$ are computed
in terms of certain basic hypergeometric functions that we now introduce:
\begin{multline} \label{eq_F_function}
\F_\rr(x)=\F^{\alpha,\beta,\gamma,\delta}_\rr(x)
\\= \sqrt{|x| \dfrac{ (x\al,x\be;q)_\infty}{\theta_q(x\ga) \theta_q(x\de)}}  \cdot x^{1-\rr}\, \dfrac{\left(\dfrac\be\ga q^{\rr-1}, \dfrac{q^\rr}{\de
x};q\right)_\infty}{\left(\dfrac{\al\be}{\ga\de}q^{2\rr-2};q\right)_{\infty} } \cdot
\, {}_2\phi_1\left(\begin{matrix}
\dfrac{\al q^{\rr-1}}\de, \, \dfrac q{\be x}\\
\dfrac{q^\rr}{\de x}\end{matrix}\Bigg| \frac{q^{\rr-1}\be}\ga\right) ,\quad
\rr\in\mathbb Z,
\end{multline}
where
$$
{}_2\phi_1\left(\begin{matrix}
A, \, B\\
C\end{matrix}\bigg| Z\right)=\sum_{n=0}^\infty\frac{(A;q)_n(B;q)_n}{(C;q)_n(q;q)_n}Z^n.
$$

\begin{remark}
The function $\F^{\alpha,\beta,\gamma,\delta}_\rr(x)$ is invariant
under the transpositions $\al\leftrightarrow\be$ and $\ga\leftrightarrow\de$.
This can be verified using transformation
formulas for the ${}_2\phi_1$ series, see Gasper--Rahman \cite[(III.3) and (II.2)]{GR}:
\begin{align*}
{}_2\phi_1\left(\begin{matrix} A, B\\
C\end{matrix}\bigg| Z\right)&= \frac{(ABZ/C;q)_\infty}{(Z;q)_\infty}  {}_2\phi_1\left(\begin{matrix} C/A, C/B\\
C\end{matrix}\bigg| ABZ/C\right)\\
&= \frac{(C/B,BZ;q)_\infty}{(C,Z;q)_\infty} {}_2\phi_1\left(\begin{matrix} ABZ/C, B\\
BZ\end{matrix}\bigg| C/B\right).
\end{align*}
\end{remark}
Further, for $\rr\in\mathbb Z$ we set
\begin{equation}\label{eq_F_norm}
  \mathfrak h_{\rr}= \z_+
  \frac{(\ga\de)^{\rr}}{\al\be}
\,
 \cdot
  \frac{q^{
 2-\rr^2} }{q^{3-2\rr} \frac{\ga
\de}{\al \be}-1} \cdot \dfrac{\theta_q\left(\dfrac{\z_-}{\z_+}, \; \ga\de
\z_-\z_+\right)}{\theta_q(\ga \z_-,\; \de  \z_-,\; \ga  \z_+,\; \de  \z_+)} \cdot
\dfrac{\left(q,\; q, \; \dfrac\al{\de}q^{\rr-1}, \; \dfrac\al{\ga }q^{\rr-1}, \;
\dfrac\be{\de}q^{\rr-1}, \; \dfrac\be{\ga}q^{\rr-1}
\right)_\infty}{\left(\dfrac{\al\be}{\ga\de}q^{2\rr-2},\;
\dfrac{\al\be}{\ga\de}q^{2\rr-2};\; q\right)_{\infty}}.
\end{equation}

\begin{theorem} \label{theorem_ergodic} Take any admissible quadruple $\alpha,\beta,\gamma,\delta$
such that $\alpha\beta<q^2\gamma\delta$. Then the correlation functions $\rho^n$,
$n=1,2,\dots$ of $\M_\infty^{\alpha,\beta,\gamma,\delta}$ are computed via
$$
 \rho^n(x_1,\dots,x_n)=\det [K^{\al,\be,\ga,\de}(x_i,x_j)]_{i,j=1}^n,
$$
$$
 K^{\al,\be,\ga,\de}(x,y)=
 \frac{1}{\mathfrak h_1} \cdot\frac{\F_0(x)\F_1(y)-\F_1(x)\F_0(y)}{x-y},
$$
where the singularity at $x=y$ in the last formula should be resolved using the
L'Hospital's rule.
\end{theorem}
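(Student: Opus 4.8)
The plan is to realize $\M_\infty^\alde$ as a limit of the pre-limit measures $\M_N^\alde$ and to compute its correlation functions by a direct limit transition in the correlation kernels of these finite ensembles. By construction \eqref{eq_measure_def}, each $\M_N^\alde$ is an orthogonal polynomial ensemble with weight $w_N^\alde$ on $\L$, the relevant orthogonal polynomials being the pseudo big $q$-Jacobi polynomials $P_n(x;\al,\be,\ga q^{1-N},\de q^{1-N})$ of Section \ref{Section_orth_polynomials}. Hence $\M_N^\alde$ is a determinantal measure whose $n$-point correlation functions are the $n\times n$ minors of the $N$th correlation kernel $K_N$. By Corollary \ref{corrfunctions}, for fixed pairwise distinct $x_1,\dots,x_n\in\L$ the correlation functions of $\M_N^\alde$ converge to those of $\M_\infty^\alde$; since $\rho^n_{\M_N^\alde}=\det[K_N(x_i,x_j)]$ is a finite determinant, it suffices to establish the pointwise limit $\lim_{N\to\infty}K_N(x,y)=K^\alde(x,y)$ for $x,y\in\L$ and identify the limit with the kernel in the statement. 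The determinantal structure of $\M_\infty^\alde$ is then automatic by continuity of the determinant.

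First I would write $K_N$ in Christoffel--Darboux form. Denoting by $h_n$ and $k_n$ the squared norm and top coefficient of $P_n(\cdot;\al,\be,\ga q^{1-N},\de q^{1-N})$ (given explicitly in Section \ref{Section_orth_polynomials} and by \eqref{eq_leading_term}), the standard Christoffel--Darboux identity gives
\[
K_N(x,y)=\frac{k_{N-1}}{k_N\,h_{N-1}}\,\sqrt{w_N^\alde(x)\,w_N^\alde(y)}\;\frac{P_N(x)P_{N-1}(y)-P_{N-1}(x)P_N(y)}{x-y}.
\]
The hypothesis $\al\be<q^2\ga\de$ is what secures the validity of this representation together with the convergence of the limiting series and the finiteness and non-vanishing of the limiting normalization. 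Introducing the wave functions $\Phi_N^{(\rr)}(x):=\sqrt{w_N^\alde(x)}\,P_{N-\rr}(x;\al,\be,\ga q^{1-N},\de q^{1-N})$ for $\rr\in\{0,1\}$, the kernel becomes the antisymmetric combination
\[
K_N(x,y)=\frac{k_{N-1}}{k_N\,h_{N-1}}\cdot\frac{\Phi_N^{(0)}(x)\Phi_N^{(1)}(y)-\Phi_N^{(1)}(x)\Phi_N^{(0)}(y)}{x-y},
\]
so the problem reduces to: (a) find normalizing scalars $b_N^{(\rr)}$ with $(b_N^{(\rr)})^{-1}\Phi_N^{(\rr)}(x)\to\F_\rr(x)$ for each fixed $x\in\L$; and (b) show $\dfrac{k_{N-1}}{k_N\,h_{N-1}}\,b_N^{(0)}b_N^{(1)}\to\dfrac1{\mathfrak h_1}$. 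Granting (a) and (b), the integrable form of $K^\alde$ follows at once, the apparent singularity at $x=y$ being removable since the numerator vanishes there.

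The core of the argument is step (a). Substituting $(a,b,c,d)=(\al,\be,\ga q^{1-N},\de q^{1-N})$ and $n=N-\rr$ into the defining ${}_3\phi_2$ of $P_{N-\rr}$ produces upper parameters $q^{\rr-N}$, $\tfrac{\ga\de}{\al\be}q^{3-N-\rr}$, $\ga q^{1-N}x$ and lower parameters $\tfrac\ga\be q^{2-N}$, $\tfrac\ga\al q^{2-N}$. I would apply a three-term transformation for terminating ${}_3\phi_2$ series (of Sears/confluence type, cf.\ the ${}_2\phi_1$ transformations of Gasper--Rahman recorded in the Remark following \eqref{eq_F_function}) to recast this terminating series as a ${}_2\phi_1$ in the variable $q^{\rr-1}\be/\ga$ with upper parameters $\tfrac{\al q^{\rr-1}}\de,\tfrac q{\be x}$ and lower parameter $\tfrac{q^\rr}{\de x}$ --- precisely the ${}_2\phi_1$ of \eqref{eq_F_function} --- multiplied by explicit $q$-Pochhammer prefactors carrying all the $N$-dependence. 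As $N\to\infty$ the transformed series converges termwise to that ${}_2\phi_1$, a dominated-convergence tail bound being available since the terms decay geometrically. Simultaneously, $\sqrt{w_N^\alde(x)}$ contributes the denominator $(\ga q^{1-N}x,\de q^{1-N}x;q)_\infty$; using the reflection $(u;q)_\infty=\theta_q(u)/(q/u;q)_\infty$ together with the quasi-periodicity $\theta_q(q^k u)=(-u)^{-k}q^{-\binom k2}\theta_q(u)$, each such Pochhammer symbol splits into the theta function $\theta_q(x\ga)$, resp.\ $\theta_q(x\de)$, appearing under the square root in $\F_\rr$, times $N$-dependent scalars absorbed into $b_N^{(\rr)}$. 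Collecting the surviving factors reproduces $\F_\rr(x)$ exactly.

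It then remains to verify step (b) by assembling $b_N^{(0)}b_N^{(1)}$ from the prefactors found above together with the ratio $k_{N-1}/(k_N h_{N-1})$, evaluated from \eqref{eq_leading_term} and the explicit norm formulas of Section \ref{Section_orth_polynomials}, and checking that after cancellation of the $N$-dependent powers of $q$ and of the parameters the product tends to the constant $1/\mathfrak h_1$ of \eqref{eq_F_norm}. \textbf{The main obstacle} is the bookkeeping in step (a): several arguments of the ${}_3\phi_2$ simultaneously tend to $0$, to $\infty$, or toward the unit circle, so one must choose the transformation that leaves the \emph{convergent} ${}_2\phi_1$ surviving and must justify the interchange of limit and summation with a uniform tail estimate. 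A subsidiary point requiring care is the consistent choice of the positive square root of $w_N^\alde$ across $\L$ (legitimate in the nondegenerate admissible case, where $w_N^\alde>0$) and the matching of all sign and theta factors, so that the antisymmetric numerator $\F_0(x)\F_1(y)-\F_1(x)\F_0(y)$ and the scalar $1/\mathfrak h_1$ emerge with the correct signs.
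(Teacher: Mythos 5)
Your strategy is the same as the paper's (Christoffel--Darboux form of $K_N$ via Proposition \ref{Proposition_corr_function}, transformation of the terminating ${}_3\phi_2$ and termwise passage to the ${}_2\phi_1$ with a geometric dominating bound, matching of the constant via the norm asymptotics, and Corollary \ref{corrfunctions} to transfer the limit to $\M_\infty^\alde$), but there are two genuine gaps. First, your step (a) is false as stated: no \emph{scalar} normalization $b_N^{(\rr)}$ makes $\Phi_N^{(\rr)}(x)$ converge pointwise on all of $\L$. Since $\ga\de>0$ while $x\in\L$ can be negative, extracting $\sqrt{x^{2(N-1)}}$ from $\sqrt{w_N^\alde(x)}$ unavoidably produces the $x$-dependent factor $(\sgn x)^{N-1}$, which oscillates in $N$ when $x<0$. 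Hence $\lim_{N\to\infty}K_N(x,y)$ does not exist pointwise when $\sgn x\ne\sgn y$ --- and the double lattice genuinely has points of both signs --- so your reduction ``it suffices to establish the pointwise limit $\lim_N K_N(x,y)=K^\alde(x,y)$'' cannot be carried out literally. The correct statement, and the one the paper proves (Proposition \ref{Prop_polynomial_limit}), is convergence of $(\sgn x)^{N-1}$ times the normalized wave function; the oscillating factor is then harmless because in every determinant $\det[K_N(x_i,x_j)]_{i,j=1}^n$ the row and column contributions combine to $\prod_i(\sgn x_i)^{2(N-1)}=1$. You gesture at ``matching of all sign and theta factors,'' but the resolution is a gauge cancellation at the level of determinants, not a choice of square root or of scalars $b_N^{(\rr)}$.

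Second, you never prove convergence on the diagonal, yet the determinant formula requires it: even for pairwise distinct $x_1,\dots,x_n$, the matrix $[K(x_i,x_j)]$ contains the entries $K(x_i,x_i)$ (indeed $\rho^1(x)=K(x,x)$ already needs it). Pointwise convergence of the wave functions at fixed lattice points gives $K_N(x,y)\to K(x,y)$ only for $x\ne y$; the difference quotient at coinciding arguments does not follow, and observing that the limiting singularity is removable is not a substitute. The paper closes this by strengthening the wave-function asymptotics to \emph{uniform} convergence on a complex neighborhood of each lattice point (replacing $\sgn x$ by $\sqrt{x^2}/x$ with a suitable branch, as in the ``Moreover'' clause of Proposition \ref{Prop_polynomial_limit}) and then passing to the limit in the Cauchy integral representation $K_N(x,x)=\frac1{2\pi i}\oint K_N(x,y)\,\frac{dy}{y-x}$ of \eqref{eq_Cauchy}, which is legitimate since the pre-limit objects are polynomials times explicit analytic weights. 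Both gaps are repairable by exactly these devices, so your outline is on the right track but incomplete as a proof; one smaller omission is the symmetry argument ($\al\leftrightarrow\be$, $\ga\leftrightarrow\de$) showing that $\al\be<q^2\ga\de$ always permits a swap after which $|\be|<q|\ga|$, which is what licenses the particular convergent ${}_2\phi_1$ you single out.
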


\begin{remark}
 It might seem that the definition of $K^{\al,\be,\ga,\de}(x,y)$ through $\F_r$ and
 \eqref{eq_F_function} makes sense only when $\be<q\gamma$, since otherwise the
 series defining $_2\phi_1$ diverges. However,
$\alpha\beta<q^2\gamma\delta$ guarantees that either $|\al|<q|\gamma|$, or
$|\be|<q|\gamma|$, or $|\alpha|<q|\de|$, or $|\be|<q|\de|$. Since $\F_r$ does no
change when we
 swap $\alpha\leftrightarrow\beta$ and $\ga\leftrightarrow\de$ we can then extend the
 definition to all $4$ cases. In fact, we believe (but leave this out of the scope of the present paper)
  that Theorem \ref{theorem_ergodic}
 should hold for any admissible quadruple of parameters (that is, if instead of $\alpha\beta<q^2\gamma\delta$ we impose the weaker requirement $\alpha\beta<q\gamma\delta$ as in Definition \ref{def.A}) provided that we replace $K^{\al,\be,\ga,\de}(x,y)$
 by its analytic continuation in parameters $\al,\be,\ga,\de$.
\end{remark}

The rest of this section is devoted to the proof of Theorem \ref{theorem_ergodic}.
We start by computing correlation functions of the measures $\M_N^{\al,\be,\ga,\de}$.

\begin{proposition}
 \label{Proposition_corr_function} Take any admissible quadruple $\alpha,\beta,\gamma,\delta$.
 Let $w(x)=w(x;\al,\be,\ga q^{1-N},\de q^{1-N})$,
 $P_n(x)=P_n(x; \al,\be,\ga q^{1-N},\de q^{1-N})$ and let $h_n$, $k_n$ be the
 squared norms and leading coefficients of these polynomials, as defined in Section
 \ref{Section_orth_polynomials}.

 Then the correlation functions $\rho^n_N$, $n=1,2,\dots$ of
$\M_N^{\alpha,\beta,\gamma,\delta}$ are computed via
\begin{equation}
\label{eq_x13}
 \rho^n_N(x_1,\dots,x_n)=\det [K^{\al,\be,\ga,\de}_N(x_i,x_j)]_{i,j=1}^n,
\end{equation}
\begin{equation}
\label{eq_x14}
 K_N^{\al,\be,\ga,\de}(x,y)=
 \frac{k_{N-1} }
 {k_N\, h_{N-1}}
 \sqrt{w(x)w(y)}
  \, \frac{P_N(x)P_{N-1}(y)-P_{N-1}(x)P_N(y)}{x-y},
\end{equation} where the singularity at $x=y$ in the last formula should be resolved using the
L'Hospital's rule.
\end{proposition}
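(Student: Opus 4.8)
The plan is to recognize $\M_N^\alde$ as a discrete orthogonal polynomial ensemble and to apply the standard determinantal formalism for such ensembles, followed by the Christoffel--Darboux summation. Indeed, writing $X=(x_1<\dots<x_N)$, formula \eqref{eq_measure_def} exhibits $\M_N^\alde$ as proportional to $\prod_{i=1}^N w(x_i)\prod_{1\le i<j\le N}(x_j-x_i)^2$ on the set of ordered $N$-tuples of distinct points of $\L$, which is exactly the form of an orthogonal polynomial ensemble with weight $w=w_N^\alde$. The only analytic input needed to launch the general machinery is that the weight has enough finite moments: one checks from the asymptotics \eqref{asymptotics} that the admissibility condition $\al\be<q\ga\de$ of Definition \ref{def.A} is precisely equivalent to the finiteness of the sums $\sum_{x\in\L}w(x)|x|^i$ for $i=0,1,\dots,2N-2$. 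This guarantees that the orthogonal polynomials $P_0,\dots,P_{N-1}$ (normalized as in Section \ref{Section_orth_polynomials}) and their squared norms $h_0,\dots,h_{N-1}$ are well defined.

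First I would reduce the measure to determinantal form. Expanding the Vandermonde factor in terms of the orthogonal polynomials, $\prod_{i<j}(x_j-x_i)=\const\cdot\det[P_{j-1}(x_i)]_{i,j=1}^N$ (by column reduction, absorbing the leading coefficients into the constant), one rewrites $\prod_{i<j}(x_j-x_i)^2\prod_i w(x_i)$ as a product of two such determinants times $\prod_i w(x_i)$; after incorporating the normalizing constant $\Z_N$ and the squared norms $h_n$ this becomes $\tfrac1{N!}\det[K_N(x_i,x_j)]_{i,j=1}^N$ with the projection kernel
$$
K_N(x,y)=\sqrt{w(x)w(y)}\,\sum_{n=0}^{N-1}\frac{P_n(x)P_n(y)}{h_n}.
$$
The general result on orthogonal polynomial ensembles (see K\"onig \cite[Section 2]{Konig}) then yields that the point process is determinantal and that its $n$-point correlation function equals $\det[K_N(x_i,x_j)]_{i,j=1}^n$, which is \eqref{eq_x13}.

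It remains to bring $K_N$ into the compact form \eqref{eq_x14}, which is the Christoffel--Darboux summation
$$
\sum_{n=0}^{N-1}\frac{P_n(x)P_n(y)}{h_n}=\frac{k_{N-1}}{k_N\,h_{N-1}}\cdot\frac{P_N(x)P_{N-1}(y)-P_{N-1}(x)P_N(y)}{x-y}.
$$
This follows algebraically by telescoping from the three-term recurrence $xP_n=a_nP_{n+1}+b_nP_n+c_nP_{n-1}$, using $a_n=k_n/k_{n+1}$ together with the relation $c_n h_{n-1}=a_{n-1}h_n$ for $1\le n\le N-1$. Here $P_N$ and its leading coefficient $k_N=k_N(\al,\be,\ga q^{1-N},\de q^{1-N})$ are supplied by the explicit formulas of Section \ref{Section_orth_polynomials} (in particular \eqref{eq_leading_term}), and the three-term recurrence for the pseudo big $q$-Jacobi polynomials is inherited, via the substitution \eqref{eq_relation_Jac}, from the recurrence for the big $q$-Jacobi polynomials by analytic continuation in the parameters. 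The diagonal value $K_N(x,x)$ is then recovered by L'Hospital's rule.

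The step requiring the most care is this last one, because under the bare admissibility hypothesis only the moments up to order $2N-2$ are finite, so the family of orthogonal polynomials on $\L$ is finite and $h_N$ need not exist. The point to stress is that the Christoffel--Darboux identity is a purely algebraic consequence of the three-term recurrence and of the ratios $h_n/h_{n-1}$ for $n\le N-1$; it involves neither $h_N$ nor the orthogonality of $P_N$ against lower-degree polynomials. Thus the identity is valid as a rational-function identity in $x,y$, and no integrability is needed beyond what already guarantees that $\M_N^\alde$ and the kernel $K_N$ are well defined.
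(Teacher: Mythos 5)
Your proposal is correct and follows essentially the same route as the paper: identify $\M_N^\alde$ as an orthogonal polynomial ensemble, invoke the general determinantal result to get the kernel $\sqrt{w(x)w(y)}\sum_{n=0}^{N-1}P_n(x)P_n(y)/h_n$, and compress it via Christoffel--Darboux, resolving the diagonal by L'Hospital (the paper also notes the alternative of a Cauchy integral). Your closing observation that the Christoffel--Darboux identity needs only the three-term relations for $n=0,\dots,N-1$, and neither $h_N$ nor square-integrability of $P_N$, is exactly the caveat the paper itself makes.
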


\begin{proof}
 The admissibility condition implies that the first $N$ orthogonal polynomials
 $P_0,\dots,P_{N-1}$ are well-defined and belong to $\ell_2(\L,w)$. Then a
 general theorem (see e.g. Borodin \cite{Borodin_det}, Borodin--Gorin \cite[Section 3]{BG_lectures}) for the $N$--point ensembles on the real line with probability
 distribution of the form
 $$
  {\rm Prob}(x_1,\dots,x_N)=\frac{1}{\Z} \prod_{i=1}^N w(x_i)  \prod_{1\le i<j\le
  N} (x_i-x_j)^2
 $$
 known as \emph{orthogonal polynomial ensemble} with weight $w(x)$ implies that the
 correlation functions are of the form \eqref{eq_x13} with
 $$
  K_N^{\al,\be,\ga,\de}(x,y)=\sqrt{w(x)w(y)} \cdot \sum_{n=0}^{N-1} \frac{P_n(x) P_n(y
  )}{h_n}.
 $$
Applying the Christoffel-Darboux formula (see e.g. Szeg\"o \cite[Section 3.2]{Szego}) we rewrite $K_N^{\al,\be,\ga,\de}(x,y)$
 as \eqref{eq_x14}. We do not have to worry if the polynomial $P_N(x)$ is square integrable, because the
Christoffel-Darboux formula relies only on the three-term relations $xP_n=\dots$
for $n=0,\dots,N-1$, which make sense.

In formula \eqref{eq_x14}, the indeterminacy on the diagonal $x=y$ is resolved via
the L'Hospital's rule. Alternatively, we may use the Cauchy integral and write
\begin{equation} \label{eq_Cauchy}
K_N(x,x)=\frac1{2\pi i}\,\oint K_N(x,y)\, \frac{dy}{y-x}
\end{equation}
with integration over a small simple $y$-contour around $x$.
\end{proof}

The correlation functions for $\M_{\infty}^{\al,\be,\ga,\de}$ can be obtained by the
limit transition of Corollary \ref{corrfunctions} and for that we need to send
$N\to\infty$ in all the parts of the formula for $K_N^{\al,\be,\ga,\de}(x,y)$. We
rely on the following statement.

\begin{lemma} For any fixed parameters $B,C,D,E$ such that $|D|>q$,
$D\notin q^{\mathbb Z}$, $E\notin q^{\mathbb Z}$, we have as integer $n$ tends to
$+\infty$
\begin{equation}\label{eq_phi32_conv}
{}_3\phi_2\left(\begin{matrix} q^{-n}, Bq^{-n}, Cq^{-n}\\
Dq^{-n}, Eq^{-n}\end{matrix}\bigg| q\right)\sim \left(\frac{BC}{DE}\right)^n\,
(-1)^n\, q^{-n(n-1)/2}\, \frac{\left(\frac{DE}{BC};q\right)_\infty}{(E^{-1}q;q)_\infty}
{}_2\phi_1\left(\begin{matrix} \frac DB, \, \frac DC\\ \frac
{DE}{BC}\end{matrix}\bigg| \frac qD\right).
\end{equation}
\end{lemma}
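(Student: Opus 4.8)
The plan is to treat the left-hand side as a \emph{terminating} series $\sum_{k=0}^n c_k$ and to reverse the order of summation so as to expose the $n$-dependence. Writing $k=n-j$ and using the elementary reflection identities for $q$-Pochhammer symbols, such as $(Bq^{-n};q)_n=(-B)^n q^{-n(n+1)/2}(q/B;q)_n$ (and the analogous ones with $C,D,E$ and with $B=1$), the top term $c_n$ evaluates in closed form to
$$
c_n=(-1)^n\Big(\frac{BC}{DE}\Big)^n q^{-n(n-1)/2}\,\frac{(q/B;q)_n\,(q/C;q)_n}{(q/D;q)_n\,(q/E;q)_n}.
$$
As $n\to\infty$ this already contributes the prefactor $(-1)^n(BC/DE)^n q^{-n(n-1)/2}$ times the ratio of infinite products $\frac{(q/B;q)_\infty(q/C;q)_\infty}{(q/D;q)_\infty(q/E;q)_\infty}$. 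Factoring out $c_n$, the remaining reversed sum is \emph{exactly} the terminating series ${}_3\phi_2\big(q^{-n},q/D,q/E;q/B,q/C\,\big|\,DEq^n/BC\big)$, whose $j$-th term carries the combination $(q^{-n};q)_j\,(DEq^n/BC)^j=\prod_{i=0}^{j-1}(q^n-q^i)\cdot(DE/BC)^j$.

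Next I would pass to the limit inside this sum. Since $\prod_{i=0}^{j-1}(q^n-q^i)\to(-1)^j q^{\binom{j}{2}}$ as $n\to\infty$, the reversed sum converges termwise to ${}_2\phi_2\big(q/D,q/E;q/B,q/C\,\big|\,DE/BC\big)$. The step I expect to require the most care is justifying this interchange of limit and infinite summation. I would do it by dominated convergence: for $n\ge j$ one has $|q^n-q^i|<q^i$, hence $\big|\prod_{i=0}^{j-1}(q^n-q^i)\big|<q^{\binom{j}{2}}$ uniformly in $n$; since $q^{\binom{j}{2}}$ decays super-geometrically it dominates the geometric factor $|DE/BC|^j$ and the (bounded, convergent) $q$-Pochhammer ratios, producing a summable majorant. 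Here the hypotheses $D,E\notin q^{\ZZ}$ ensure that the denominators $(Dq^{-n};q)_j,(Eq^{-n};q)_j$ never vanish, while $|D|>q$ is precisely what makes the target ${}_2\phi_1$, of argument $q/D$, converge.

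Finally I would convert the ${}_2\phi_2$ into the ${}_2\phi_1$ of the statement by a short chain of classical transformations, taking care that the accumulated $q$-product prefactors collapse exactly. First, the standard transformation ${}_2\phi_1(a,b;c;z)=\frac{(az;q)_\infty}{(z;q)_\infty}\,{}_2\phi_2\big(a,c/b;c,az\,\big|\,bz\big)$ (Gasper--Rahman \cite{GR}), read backwards with $(a,b,c,z)=(q/D,E/B,q/B,D/C)$, turns ${}_2\phi_2\big(q/D,q/E;q/B,q/C\,\big|\,DE/BC\big)$ into $\frac{(D/C;q)_\infty}{(q/C;q)_\infty}\,{}_2\phi_1\big(q/D,E/B;q/B;D/C\big)$. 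Swapping the two upper parameters and applying one Heine transformation ((II.1) in \cite{GR}) with $a=E/B$, $b=q/D$, $c=q/B$, $z=D/C$ then yields $\frac{(q/D;q)_\infty(DE/BC;q)_\infty}{(q/B;q)_\infty(D/C;q)_\infty}\,{}_2\phi_1\big(D/B,D/C;DE/BC;q/D\big)$. Multiplying all prefactors, namely $\frac{(q/B;q)_\infty(q/C;q)_\infty}{(q/D;q)_\infty(q/E;q)_\infty}$ from $c_n$ together with the two factors just produced, every one of $(q/B;q)_\infty,(q/C;q)_\infty,(q/D;q)_\infty,(D/C;q)_\infty$ cancels, leaving exactly $\frac{(DE/BC;q)_\infty}{(q/E;q)_\infty}$ in front of the stated ${}_2\phi_1$, which matches \eqref{eq_phi32_conv} (note $(q/E;q)_\infty=(E^{-1}q;q)_\infty$). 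The only genuine bookkeeping risk is a mismatch in these infinite products, so I would re-derive each transformation and track the $q$-shifted Pochhammer factors carefully.
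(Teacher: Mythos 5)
Your proof is correct, and it takes a genuinely different route from the paper's. The paper proves the lemma in one stroke: it quotes the known transformation (III.11) of Gasper--Rahman, which converts the left-hand side into an explicit prefactor times ${}_3\phi_2\bigl(q^{-n},\,D/B,\,D/C;\;Dq^{-n},\,DE/(BC)\,\big|\,q\bigr)$, whose terms visibly degenerate (via $(q^{-n};q)_k/(Dq^{-n};q)_k\to D^{-k}$) to those of the target ${}_2\phi_1$ at argument $q/D$, the interchange of limit and sum being justified by a geometric majorant of ratio $q/|D|<1$. You instead rebuild the transformation by hand: reversing the terminating sum --- I checked that your $c_n$ is right and that $c_{n-j}/c_n$ is exactly the $j$-th term of ${}_3\phi_2\bigl(q^{-n},\,q/D,\,q/E;\;q/B,\,q/C\,\big|\,DEq^n/(BC)\bigr)$ --- then taking the termwise limit to a ${}_2\phi_2$ with the clean super-geometric majorant $q^{\binom{j}{2}}|DE/(BC)|^j$, and finally travelling to the stated ${}_2\phi_1$ via the ${}_2\phi_1$--${}_2\phi_2$ transformation and one Heine transformation; I verified that the accumulated infinite products collapse to exactly $(DE/(BC);q)_\infty/(q/E;q)_\infty$, as you claim. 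Your dominated-convergence step is actually spelled out more carefully than the paper's one-line majorization; what the paper's route buys is brevity (one citation, no intermediate ${}_2\phi_2$) and landing directly on a series whose limit is the target.

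Two caveats to patch. First, your reversal presumes $c_n\neq 0$, i.e.\ $B,C\notin\{q,q^2,\dots\}$ (otherwise $(q/B;q)_n$ or $(q/C;q)_n$ vanishes and the left-hand series terminates early), a restriction absent from the lemma's hypotheses and from the paper's proof; in the application the parameters are generic, so this costs little, but it should be stated. Second, and more substantively: your intermediate ${}_2\phi_1$ at argument $D/C$ is a divergent series whenever $|D/C|\ge 1$, and this range genuinely occurs in the paper's application, where $D/C=q/(\beta x)$ with $x\in\L$ accumulating at $0$. So the two classical transformations cannot there be applied as identities between convergent series. Since both endpoints of your chain do converge (the ${}_2\phi_2$ is entire in its argument, and the final ${}_2\phi_1$ has argument $q/D$ with $|q/D|<1$) and both sides are analytic in the parameters, the composite identity extends from the region $|D/C|<1$ by analytic continuation --- but this must be said explicitly, or else the composite ${}_2\phi_2\to{}_2\phi_1$ identity should be proved directly.
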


\begin{proof}
\eqref{eq_phi32_conv} is proved by applying the transformation formula in Gasper--Rahman
\cite[(III.11)]{GR}, which reads
\begin{equation}
\label{eq_phi32_transform}
{}_3\phi_2\left(\begin{matrix} q^{-n}, Bq^{-n}, Cq^{-n}\\
Dq^{-n}, Eq^{-n}\end{matrix}\bigg| q\right)=\dfrac{\left(\frac{DE}{BC};q\right)_n\,
\left(\frac{BC}D\right)^n
q^{-n^2}}{(Eq^{-n};q)_n}\, {}_3\phi_2\left(\begin{matrix} q^{-n}, \frac DB, \frac DC\\
Dq^{-n}, \frac{DE}{BC}\end{matrix}\bigg| q\right).
\end{equation}
Next, to handle $(Eq^{-n})_n$, we use
\begin{equation}\label{eq_pochamer_transform}
(Eq^{-n};q)_n=E^n (-1)^n q^{-n(n+1)/2}(E^{-1}q;q)_n.
\end{equation}
It remains to notice that as $n\to +\infty$, each term in the series expansion of
${}_3\phi_2$ in the right-hand side of \eqref{eq_phi32_transform} degenerates to the
same term in the expansion for ${}_2\phi_1$ indicated in \eqref{eq_phi32_conv}.
Since the series expansion of ${}_3\phi_2$ is majorated by a multiple of geometric
series with ratio $|q/D|<1$, this gives the desired convergence of ${}_3\phi_2$  to
${}_2\phi_1$ and proves \eqref{eq_phi32_conv}.
\end{proof}

As a corollary we find the asymptotics of the polynomials $P_n$.

\begin{proposition} \label{Prop_polynomial_limit} Take an admissible quadruple $\alpha,\beta,\gamma,\delta$
and any $\rr\in\mathbb Z$ such that $P_{N-\rr}(x)$ is a well--defined orthogonal
polynomial for all large $N$, and such that $|q^{\rr-1}\beta|<|\gamma|$. We have
\begin{multline} \label{eq_x15}
\lim_{N\to\infty} (\sgn x )^{N-1}
  \frac{(\gamma\delta)^{(N-1)/2}}{q^{N(N-1)/2}}\sqrt{w(x;\al,\be,\ga q^{1-N},\de q^{1-N})}\,
\frac{P_{N-\rr}(x;\al,\be,\ga q^{1-N},\de q^{1-N})}{k_{N-\rr}(\al,\be,\ga
q^{1-N},\de q^{1-N})}
\\=
  \F_\rr(x).
\end{multline}

The convergence in \eqref{eq_x15} holds for any $x\in\L$. Moreover, for each $x_0\in
\L$ the convergence in \eqref{eq_x15} is uniform over $x$ in an open complex
neighborhood of $x_0$ if we replace everywhere $\sgn(x)$ by $\sqrt{x^2}/x$ with
branch of square root chosen so that $\sqrt{x_0^2}/x_0=\sgn(x_0)$, and similarly
replace $|x|$ by $\sqrt{x^2}$ in all the definitions.
\end{proposition}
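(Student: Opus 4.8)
The plan is to prove \eqref{eq_x15} by a completely explicit asymptotic analysis, letting $N\to\infty$ factor by factor in the three ingredients that make up the left-hand side: the ${}_3\phi_2$-representation of $P_{N-\rr}$, the leading coefficient $k_{N-\rr}$ from \eqref{eq_leading_term}, and the weight $w$, all specialized at $(a,b,c,d)=(\al,\be,\ga q^{1-N},\de q^{1-N})$ and $n=N-\rr$. The hypergeometric factor is governed entirely by the preceding Lemma \eqref{eq_phi32_conv}, while the remaining prefactors are products of powers and $q$-Pochhammer symbols whose large-$N$ behaviour can be read off from two standard identities: the reflection formula $(a;q)_m=(-a)^m q^{\binom{m}{2}}(q^{1-m}/a;q)_m$ and the theta quasi-periodicity.

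First I would rewrite $P_{N-\rr}(x)$ in the normal form of the Lemma, i.e.\ as a ${}_3\phi_2$ with upper parameters $q^{-n},Bq^{-n},Cq^{-n}$ and lower parameters $Dq^{-n},Eq^{-n}$, where the four numbers $B=\frac{\ga\de}{\al\be}q^{3-2\rr}$, $C=\ga x\,q^{1-\rr}$, $D=\frac{\ga}{\be}q^{2-\rr}$, $E=\frac{\ga}{\al}q^{2-\rr}$ are independent of $N$. A one-line check gives $D/B=\frac{\al}{\de}q^{\rr-1}$, $D/C=\frac{q}{\be x}$, $DE/BC=\frac{q^\rr}{\de x}$, $q/D=\frac{\be}{\ga}q^{\rr-1}$, and $BC/DE=\de x\,q^{-\rr}$. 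Hence the hypothesis $|D|>q$ of the Lemma is \emph{exactly} the assumption $|q^{\rr-1}\be|<|\ga|$, and the limiting ${}_2\phi_1$ produced by \eqref{eq_phi32_conv} is precisely the series appearing in \eqref{eq_F_function}. This already pins down the nontrivial analytic part of $\F_\rr$; moreover the factor $(E^{-1}q;q)_\infty=(\frac{\al}{\ga}q^{\rr-1};q)_\infty$ coming from the Lemma will later cancel against a matching factor from $k_{N-\rr}$.

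It then remains to match the prefactors. For the weight I would use the quasi-periodicity in the form $(\ga x\,q^{1-N};q)_\infty=(-\ga x)^{N-1}q^{-N(N-1)/2}(q/(\ga x);q)_{N-1}(\ga x;q)_\infty$, whose limit is $(-\ga x)^{N-1}q^{-N(N-1)/2}\theta_q(\ga x)$, and similarly for $\de$; combined with the explicit normalization $(\sgn x)^{N-1}(\ga\de)^{(N-1)/2}q^{-N(N-1)/2}$ this yields the clean limit $x^{1-N}\sqrt{|x|(\al x,\be x;q)_\infty/(\theta_q(\ga x)\theta_q(\de x))}$. For $k_{N-\rr}$ I would apply the reflection identity to each of the three finite Pochhammer symbols in \eqref{eq_leading_term}, which converts their $N$-growing arguments $\frac{\ga\de}{\al\be}q^{3-N-\rr}$, $\frac{\ga}{\be}q^{2-N}$, $\frac{\ga}{\al}q^{2-N}$ into the bounded arguments $\frac{\al\be}{\ga\de}q^{2\rr-2}$, $\frac{\be}{\ga}q^{\rr-1}$, $\frac{\al}{\ga}q^{\rr-1}$, whose $N\to\infty$ limits are exactly the infinite $q$-Pochhammer symbols visible in \eqref{eq_F_function}.

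Finally I would assemble the three limits. The main obstacle—though a purely bookkeeping one—is to verify that the quadratic-in-$N$ powers $q^{\pm(N-\rr)(N-\rr-1)/2}$ and $q^{\mp N(N-1)/2}$ arising from the Lemma, from the reflection identity, and from the theta asymptotics cancel completely, as do the powers $(\de x)^{N-\rr}$, $\de^{N-\rr}$ and all the signs, so that the surviving powers of $x$ combine to $x^{1-N}\cdot x^{N-\rr}=x^{1-\rr}$ and reproduce $\F_\rr(x)$ on the nose. Concretely I expect to show $P_{N-\rr}(x)/k_{N-\rr}\to x^{N-\rr}\,\frac{(\frac{q^\rr}{\de x},\frac{\be}{\ga}q^{\rr-1};q)_\infty}{(\frac{\al\be}{\ga\de}q^{2\rr-2};q)_\infty}\,{}_2\phi_1(\cdots)$, and then to multiply by the weight limit above. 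For the uniform convergence claim I would note that after replacing $|x|$ and $\sgn x$ by the indicated branches of $\sqrt{x^2}$ every algebraic prefactor is analytic in $x$ away from the poles of $\theta_q(\ga x)\theta_q(\de x)$, while the convergence ${}_3\phi_2\to{}_2\phi_1$ is dominated term-by-term by a fixed geometric series of ratio $|q/D|<1$, uniformly for $x$ in a small neighborhood of $x_0$; dominated convergence then upgrades the pointwise limit to a locally uniform one.
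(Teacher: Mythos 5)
Your proposal is correct and follows essentially the same route as the paper's own proof: the identical specialization $n=N-\rr$, $B=\frac{\ga\de}{\al\be}q^{3-2\rr}$, $C=\ga x\,q^{1-\rr}$, $D=\frac{\ga}{\be}q^{2-\rr}$, $E=\frac{\ga}{\al}q^{2-\rr}$ in the ${}_3\phi_2\to{}_2\phi_1$ lemma \eqref{eq_phi32_conv}, the reflection identity \eqref{eq_pochamer_transform} applied to the Pochhammer symbols in $k_{N-\rr}$, and the theta-function rewriting of the shifted weight, with the same cancellation of the quadratic powers of $q$ and of the factors $(\frac{\al}{\ga}q^{\rr-1};q)_\infty$, $\de^{N-\rr}$, and the signs. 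The only point where you go slightly beyond the paper's write-up is in making the locally uniform convergence explicit via the geometric-series majorant $|q/D|<1$, which the paper leaves implicit in the proof of its lemma.
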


\begin{proof}
We start by investigating the asymptotic of $\sqrt{w(x)}$. We have
\begin{multline*}
w(x; \al,\be,\ga q^{1-N},\de q^{1-N})=w(x; \al,\be,\ga,\de)\\
\times\frac1{(1-\ga xq^{-1})\dots(1-\ga xq^{1-N})(1-\de xq^{-1})\dots(1-\de x q^{1-N})}\\
=w(x; \al,\be,\ga,\de)\frac{q^{N(N-1)}}{(\ga\de)^{N-1}}\,
\,\frac{x^{-2(N-1)}}{(\ga^{-1}x^{-1}q)_{N-1}(\de^{-1}x^{-1}q)_{N-1}}.
\end{multline*}

Note that $\ga\de>0$ while $x$ may be negative, so that we write
$$
\sqrt{x^{2(N-1)}}=(x\sgn x)^{N-1}.
$$
Therefore, as $N\to\infty$
$$
(\sgn x)^{N-1}\sqrt{w(x;\al,\be,\ga q^{1-N},\de q^{1-N})}\sim\frac{q^{N(N-1)/2}}{(\ga\de)^{(N-1)/2}x^{N-1}}\\
\sqrt{|x|\frac{(x\al,x\be;q)_\infty}{\theta_q(x\ga,x\de)}}.
$$

We further compute the large-$N$ asymptotics of $P_{N-\rr}(x;\al,\be,\ga q^{1-N},\de
q^{1-N}).$ By the definition of Section \ref{Section_orth_polynomials},
\begin{equation} \label{eq_x16}
P_{N-\rr}(x;\al,\be,\ga q^{1-N}, \de q^{1-N}) = {}_3\phi_2\left(\begin{matrix}
q^{-N+\rr}, \; \frac{\ga\de}{\al\be} q^{3-N-\rr},
 \; \ga x q^{1-N}\\
\dfrac\ga\be q^{2-N}, \; \dfrac\ga\al q^{2-N}\end{matrix}\Bigg| q\right).
\end{equation}
Now we use \eqref{eq_phi32_conv} with parameters
$$
n=N-\rr, \quad B=\frac{\ga\de}{\al\be}q^{3-2\rr}  , \quad C=\ga x q^{1-\rr}, \quad
D=\frac\ga\be q^{2-\rr}, \quad E=\frac\ga\al q^{2-\rr}.
$$

This gives
\begin{multline*}
P_{N-\rr}(x;\al,\be,\ga q^{1-N}, \de q^{1-N})\sim (-1)^{N-\rr}\cdot \de^{N-\rr}\cdot
x^{N-\rr}\cdot
 q^{-(N-\rr)(N-\rr-1)/2-\rr(N-\rr)} \\ \times
\frac{\left(\dfrac{q^{\rr}} {\delta x} \right)_\infty}{\left(\dfrac{\al}{\ga}
q^{\rr-1}\right)_\infty} {}_2\phi_1\left(\begin{matrix} \dfrac { q^{\rr-1}
{\al}}{{\de}}, \, \dfrac {q}{\be x }\\ \dfrac { q^{\rr} }{\de  x
}\end{matrix}\,\Bigg|\, \dfrac {q^{\rr-1}\beta}{\ga } \right)
\end{multline*}
For the leading coefficients $k_n$ we use the formula \eqref{eq_leading_term} and
transformation \eqref{eq_pochamer_transform} which yield
\begin{multline*}
 k_{N-\rr}(\al,\be,\ga
q^{1-N},\de q^{1-N})=q^{(1-N)(N-\rr)} \gamma^{N-\rr} \dfrac{(\frac{ \ga\de}{\al\be}
q^{-2\rr+3} q^{\rr-N};q)_{N-\rr}}{ (\frac{\ga}{\be} q^{2-\rr} q^{\rr-N},
\frac{\ga}{\al} q^{2-\rr} q^{\rr-N};q)_{N-\rr}}\\ = (-1)^{N-\rr}
q^{(N-\rr)(-N-\rr+1)/2}\dfrac{(\frac{\al\be}{ \ga\de} q^{2\rr-2};q)_{N-\rr}}{
(\frac{\be}{\ga} q^{\rr-1}, \frac{\al}{\ga} q^{\rr-1} ;q)_{N-\rr}} 
\de^{N-\rr}
\end{multline*}
Combining the asymptotic of $\sqrt{w(x)}$, $P_{N-\rr}(x)$, and $k_{N-\rr}$  we
arrive at the desired result.
\end{proof}

\begin{proposition} \label{Prop_limit_norm}
 As $N\to\infty$
 $$
  \frac{h_{N-\rr}(\alpha,\beta,\gamma q^{1-N},\delta q^{1-N})}
  {[k_{N-\rr}(\alpha,\beta,\gamma q^{1-N},\delta q^{1-N})]^2}\sim   \frac{q^{N(N-1)} }{
  (\ga\de)^{N-1}} \cdot \mathfrak h_\rr.
 $$
\end{proposition}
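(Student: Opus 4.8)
The plan is to substitute the explicit closed formulas for $h_n/h_0$ and for $h_0(a,b,c,d)$ from Section~\ref{Section_orth_polynomials}, together with the formula \eqref{eq_leading_term} for the leading coefficient, under the specialization $(a,b,c,d)=(\al,\be,\ga q^{1-N},\de q^{1-N})$ and $n=N-\rr$, and then to let $N\to\infty$. The asymptotics of $k_{N-\rr}$ was already extracted in the proof of Proposition~\ref{Prop_polynomial_limit}, so the only genuinely new work is the large-$N$ behaviour of $h_{N-\rr}=h_0\cdot(h_{N-\rr}/h_0)$; once this is in hand the ratio $h_{N-\rr}/k_{N-\rr}^2$ is assembled by direct multiplication.

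Under this specialization several of the $q$-Pochhammer symbols of length $N-\rr$ occurring in $h_n/h_0$ acquire arguments that blow up like $q^{2-N}$ or $q^{3-2N}$. The tool for these is the reversal identity \eqref{eq_pochamer_transform}: writing such a factor in the form $(Eq^{-(N-\rr)};q)_{N-\rr}$ with an $N$-independent $E$ converts it into $E^{N-\rr}(-1)^{N-\rr}q^{-(N-\rr)(N-\rr+1)/2}(E^{-1}q;q)_{N-\rr}$, whose last factor has a fixed argument and therefore converges to the corresponding infinite product. For instance $(qd/a;q)_{N-\rr}$ (with $E=\tfrac{\de}{\al}q^{2-\rr}$) contributes the limit factor $(\tfrac{\al}{\de}q^{\rr-1};q)_\infty$, and likewise $qd/b$, $qc/a$, $qc/b$, $qcd/(ab)$ produce $(\tfrac{\be}{\de}q^{\rr-1};q)_\infty$, $(\tfrac{\al}{\ga}q^{\rr-1};q)_\infty$, $(\tfrac{\be}{\ga}q^{\rr-1};q)_\infty$ and $(\tfrac{\al\be}{\ga\de}q^{2\rr-2};q)_\infty$. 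These are exactly the infinite products appearing in $\mathfrak h_\rr$ of \eqref{eq_F_norm}; moreover, after dividing by $k_{N-\rr}^2$ the symbols $(\tfrac{\al}{\ga}q^{\rr-1};q)_\infty$ and $(\tfrac{\be}{\ga}q^{\rr-1};q)_\infty$ drop from a square down to a single power, while $(\tfrac{\al\be}{\ga\de}q^{2\rr-2};q)_\infty$ moves to the denominator squared, reproducing precisely the Pochhammer part of \eqref{eq_F_norm}. The scalar ratio $\tfrac{1-qcd/(ab)}{1-q^{2n+1}cd/(ab)}$ has a blowing-up numerator but an $N$-independent denominator $1-\tfrac{\ga\de}{\al\be}q^{3-2\rr}$, which is the source of the factor $(q^{3-2\rr}\tfrac{\ga\de}{\al\be}-1)^{-1}$.

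In $h_0$ the four products $a/c,a/d,b/c,b/d$ have argument proportional to $q^{N-1}\to0$ and hence tend to $1$, as does $(ab/(qcd);q)_\infty$, so only $(q;q)_\infty$ survives from the Pochhammer part. The genuinely $N$-dependent part of $h_0$ is carried by the theta factors: each of $\theta_q(c\z_\pm)$, $\theta_q(d\z_\pm)$ and $\theta_q(cd\z_-\z_+)$ has an argument of the form $uq^{m}$ with $m$ linear in $N$, and I reduce it to an $N$-independent theta by the quasi-periodicity $\theta_q(uq^{m})=(-1)^{m}u^{-m}q^{-m(m-1)/2}\theta_q(u)$. This rewrites the theta part of $h_0$ as $\theta_q(\z_-/\z_+,\ga\de\z_-\z_+)/\theta_q(\ga\z_-,\de\z_-,\ga\z_+,\de\z_+)$ times an explicit monomial in $q,\ga,\de,\z_\pm$ and a sign.

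The final step is purely bookkeeping: multiplying the three asymptotic pieces, dividing by $k_{N-\rr}^2$ taken from Proposition~\ref{Prop_polynomial_limit}, and collecting every $N$-dependent quantity --- the exponents $q^{\pm(N-\rr)(N-\rr\pm1)/2}$ from the reversals, the $q^{-(N^2-N)/2}$-type exponents together with the $\ga,\de,\z_\pm$-monomials from the five $N$-dependent theta factors, the power $(c^2/(ab))^{N-\rr}$, and all the signs --- one checks that they combine to exactly $q^{N(N-1)}/(\ga\de)^{N-1}$, while the $N$-independent residue equals $\mathfrak h_\rr$ as written in \eqref{eq_F_norm}. I expect the main obstacle to be precisely this bookkeeping: the individual limits are routine, but one must verify that the quadratic-in-$N$ exponents telescope to $N(N-1)$, that the various signs cancel, and above all that the $\z_\pm$- and $\ga,\de$-monomials produced by the five theta functions cancel against one another and against the $E$-powers from the reversals, so as to leave only the ratio of $N$-independent thetas together with the overall prefactor $\z_+(\ga\de)^\rr/(\al\be)$.
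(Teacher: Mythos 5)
Your proposal is correct and follows essentially the same route as the paper's own proof: the paper likewise substitutes the closed formulas of Section~\ref{Section_orth_polynomials} into $h_{N-\rr}/k_{N-\rr}^2$ with $(a,b,c,d)=(\al,\be,\ga q^{1-N},\de q^{1-N})$, applies the reversal identity \eqref{eq_pochamer_transform} to the Pochhammer symbols whose arguments blow up, uses the quasi-periodicity $\theta_q(qx)=-x^{-1}\theta_q(x)$ (your iterated form of it) for the theta factors, and then carries out exactly the monomial bookkeeping you describe to arrive at $q^{N(N-1)}(\ga\de)^{1-N}\,\mathfrak h_\rr$. One intermediate attribution in your sketch is wrong, although it would be caught in the verification you already plan: the factor $(qcd/(ab);q)_{N-\rr}=\bigl(\tfrac{\ga\de}{\al\be}q^{3-2N};q\bigr)_{N-\rr}$ reverses to a monomial times $\bigl(\tfrac{\al\be}{\ga\de}q^{N+\rr-2};q\bigr)_{N-\rr}\to 1$, not to $\bigl(\tfrac{\al\be}{\ga\de}q^{2\rr-2};q\bigr)_\infty$; both copies of that symbol in the denominator of $\mathfrak h_\rr$ come from the square $(cdq^{N-\rr+1}/(ab);q)_{N-\rr}^2$ inside $k_{N-\rr}^2$, so your stated net count (denominator squared) is right, but your itemization, taken literally, would produce three copies rather than two.
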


\begin{remark}
 In fact, under appropriate restrictions on the quadruple
 $\alpha,\beta,\gamma,\delta$ and $\rr$ the functions $\F_\rr$ are orthogonal in
 $\ell^2(\L)$ with counting measure and $\mathfrak h_\rr$ are their squared norms.
 This is merely the $N\to\infty$ limit of the orthogonality relations for polynomials
 $P_n$.
\end{remark}

\begin{proof}[Proof of Proposition \ref{Prop_limit_norm}]
By the formulas of Section \ref{Section_orth_polynomials}, we have
\begin{multline}
\label{eq_hn} \frac{h_{n}(\al,\be,\ga q^{1-N},\de
q^{1-N},\z_+,\z_-)}{[k_n(\al,\be,\ga q^{1-N},\de q^{1-N})]^2}=\z_+\dfrac{\left(q,\;
\dfrac{\al}{\ga}q^{N-1}, \; \dfrac {\al}{\de}q^{N-1}, \; \dfrac{\be}{\ga} q^{N-1},
\; \dfrac{\be}{\de} q^{N-1};\; q
\right)_\infty} {\left(q^{2N-3} \cdot\frac{\al \be}{\ga \de};\; q\right)_\infty}\\
\times \dfrac{\theta_q\left(\dfrac{\z_-}{\z_+}, \; \ga\de q^{2-2N}
\z_-\z_+\right)}{\theta_q(\ga q^{1-N} \z_-,\;
\de q^{1-N} \z_-,\; \ga q^{1-N} \z_+,\; \de q^{1-N} \z_+)}\\
\times (-1)^n\cdot\frac{q^{2n+n(n-1)/2}}{(\al \be)^n}\cdot \frac{q^{3-2N} \frac{\ga
\de}{\al \be}-1}{q^{3-2N+2n} \frac{\ga \de}{\al \be}-1}\cdot \dfrac{\left(\dfrac{\ga
q^{2-N}}\al, \; \dfrac{\ga q^{2-N}}\be, \; \dfrac{\de q^{2-N}}\al,\; \dfrac{\de
q^{2-N}}\be, \; q; q\right)_n}{\left(q^{3-2N+n} \frac{\ga \de}{\al \be},\;
q^{3-2N+n} \frac{\ga \de}{\al \be},\; q^{3-2N} \frac{\ga \de}{\al \be};\;
q\right)_n}.
\end{multline}
Set $n=N-\rr$ and send $N\to\infty$. Then using \eqref{eq_pochamer_transform} and
the identity
$$
 \theta_q(qx)=(qx;q)_\infty (x^{-1};q)_\infty = (x;q)_\infty (q x^{-1};q)_\infty
 \frac{1-x^{-1}}{1-x}=-x^{-1} \theta_q(x),
$$
we see that asymptotically \eqref{eq_hn} behaves as
\begin{multline}
\label{eq_hn2} \z_+ \cdot (q;q)^2_\infty  q^{-(N-1)(2N-1)+2N(N-1)}
\dfrac{\theta_q\left(\dfrac{\z_-}{\z_+}, \; \ga\de  \z_-\z_+\right)}{\theta_q(\ga
\z_-,\; \de  \z_-,\; \ga  \z_+,\; \de  \z_+)} \cdot
 q^{2n+n(n-1)/2} \frac{q^{-2N}}{
(\ga\de)^n} \cdot \frac{q^{3} \frac{\ga \de}{\al \be}}{q^{3-2N+2n} \frac{\ga
\de}{\al \be}-1}
\\
\times
 \dfrac{\left(\dfrac\al{\ga q^{1+n-N}}, \; \dfrac\be{\ga q^{1+n-N}}, \; \dfrac\al{\de
q^{1+n-N}},\; \dfrac\be{\de q^{1+n-N}};\; q \right)_n}{\left(q^{-2+2N-2n} \frac{\al
\be}{\ga \de},\; q^{-2+2N-2n} \frac{\al \be}{\ga \de},\; q^{-2+2N-n} \frac{\al
\be}{\ga \de};\; q\right)_n } q^{\frac{n}{2} (4(3-2N+n)-2(5-4N+3n)-(5-4N+n))}
\\
\sim
 q^{N(N-1)}  \frac{1}{ (\ga\de)^{N-\rr}}
\cdot q^{
 2-\rr^2}\,
 \z_+ \cdot (q;q)^2_\infty
\dfrac{\theta_q\left(\dfrac{\z_-}{\z_+}, \; \ga\de  \z_-\z_+\right)}{\theta_q(\ga
\z_-,\; \de  \z_-,\; \ga  \z_+,\; \de  \z_+)} \cdot
 \frac{ \frac{\ga \de}{\al \be}}{q^{3-2\rr} \frac{\ga
\de}{\al \be}-1}
\\
\times
 \dfrac{\left(\dfrac\al{\ga q^{1-\rr}}, \; \dfrac\be{\ga q^{1-\rr}}, \; \dfrac\al{\de
q^{1-\rr}},\; \dfrac\be{\de q^{1-\rr}};\; q \right)_\infty}{\left(q^{-2+2\rr}
\frac{\al \be}{\ga \de},\; q^{-2+2\rr} \frac{\al \be}{\ga \de};\; q\right)_\infty }.
\qedhere
\end{multline}
\end{proof}

\begin{proof}[Proof of Theorem \ref{theorem_ergodic}]
 By Proposition \ref{Proposition_corr_function} and Corollary
 \ref{corrfunctions}
 we need to study $N\to\infty$ asymptotic of $K_N^{\al,\be,\de,\ga}(x,y)$. Note that
 both coherent systems $\M_N^{\al,\be,\ga,\de}$ and the kernel
 $K_N^{\al,\be,\de,\ga}(x,y)$ is invariant under swaps $\alpha \leftrightarrow
 \beta$ and $\ga \leftrightarrow \de$. Therefore, we can assume by making an
 appropriate swap that $|\beta|< q|\gamma|$, since $\alpha\beta< q^2 \ga\de$
 guarantees that either $|\beta|< q|\gamma|$ or $|\al|< q|\de|$.

 We now consider the case $x\ne y$. Then we use Propositions \ref{Prop_polynomial_limit} and
 \ref{Prop_limit_norm} to get the required asymptotics and notice that the factors
 $(\sgn x)^{N-1} (\sgn y)^{N-1}$ cancel out when we compute the determinants in
 the definition of $\rho_n^N$.

 We extend the asymptotic to the case $x=y$ by using the Cauchy integral (since before the limit we
 deal with polynomials, everything is analytic) and then passing in it to
 $N\to\infty$ limit by using the uniformity in Proposition
 \ref{Prop_polynomial_limit}.
\end{proof}

\bigskip

\noindent
Vadim Gorin\\
Department of Mathematics, MIT, Cambridge, MA, USA; \\
Institute for Information Transmission Problems, Moscow, Russia\\
\texttt{vadicgor@gmail.com}

\medskip
\noindent
Grigori Olshanski\\
Institute for Information Transmission Problems, Moscow, Russia;\\
National Research University Higher School of Economics, Moscow, Russia\\
\texttt{olsh2007@gmail.com}

\end{document}